\newtheorem{theorem}{Theorem}
\newtheorem{corollary}[theorem]{Corollary}
\newtheorem{lemma}[theorem]{Lemma}
\newcommand{\Q}{{\mathbb{Q}}}
\newcommand{\N}{{\mathbb{N}}}
\newcommand{\R}{{\mathbb{R}}}
\newcommand{\RP}{{\R\mathrm{P}}}
\newcommand{\cA}{{\mathcal{A}}}
\newcommand{\IM}{{\mathcal{I}}}
\newcommand{\cO}{{\mathcal{O}}}
\newcommand{\MP}{{\mathcal{P}}}
\newcommand{\cR}{{\mathcal{R}}}
\newcommand{\cM}{{\mathcal{M}}}
\newcommand{\Wmap}{S}
\newcommand{\bal}{{\boldsymbol\alpha}}
\newcommand{\balF}{{\boldsymbol a}}
\newcommand{\bbeta}{{\boldsymbol\beta}}
\newcommand{\bgamma}{{\boldsymbol\gamma}}
\newcommand{\brho}{{\boldsymbol\rho}}
\newcommand{\bm}{{\mathbf m}}
\newcommand{\bn}{{\mathbf n}}
\newcommand{\topL}[1]{{\stackrel{\KF_{#1}}{\,\,\,\longrightarrow\,\,\,}}}
\newcommand{\Reg}{{\mathrm{Reg}}}
\newcommand{\oD}{{\overline{\Delta}}}
\newcommand{\wh}[1]{{\widehat{#1}}}
\newcommand{\RF}{\wh{\cR}}
\newcommand{\MF}{\wh{\cM}}
\newcommand{\IF}{\wh{\IM}}
\newcommand{\KF}{\wh{K}}
\newcommand{\DelF}{\wh{\Delta}}
\newcommand{\PhiF}{\wh{\Phi}}
\newcommand{\alphaF}{a}
\title{Symbol ratio minimax sequences in the lexicographic order}
\shorttitle{Symbol ratio minimax sequences}
\date{January 2013}
\author{Philip Boyland,
Andr\'e de Carvalho, and Toby Hall}
\medskip\noindent {\tiny\phantom{x} \quad
    PB: Department of Mathematics, University of Florida, 372 Little
    Hall, Gainesville, FL 32611-8105, USA\\\phantom{x} \quad AdC:
    Departamento de Matem\'atica Aplicada, IME-USP, Rua Do Mat\~ao
    1010, Cidade Universit\'aria, 05508-090 S\~ao Paulo SP,
    Brazil\\ \phantom{x} \quad TH: Department of Mathematical
    Sciences, University of Liverpool, Liverpool L69 7ZL, UK}}
\begin{document}
\maketitle

\begin{abstract}
Consider the space of sequences of~$k$ letters ordered
lexicographically.  We study the set~$\cM(\bal)$ of all maximal
sequences for which the asymptotic proportions~$\bal$ of the letters
are prescribed, where a sequence is said to be maximal if it is at
least as great as all of its tails. The infimum of $\cM(\bal)$ is
called the {\em $\bal$-infimax} sequence, or the {\em $\bal$-minimax}
sequence if the infimum is a minimum. We give an algorithm which
yields all infimax sequences, and show that the infimax is {\em not} a
minimax if and only if it is the $\bal$-infimax for every $\bal$ in a
simplex of dimension~1 or greater. These results have applications
to the theory of rotation sets of beta-shifts and torus
homeomorphisms.
\end{abstract}

\section{Introduction}
Symbolic dynamics is a fundamental tool in dynamical systems theory,
and the interaction between the dynamics of the shift map and an order
structure is frequently important. For example, kneading
theory~\cite{kneading} describes the dynamics of a unimodal map as the
set of sequences which are less than or equal to the kneading sequence
of the map in the unimodal order; while in Parry's work~\cite{parry}
on beta-shifts it is the relationship between the shift map and the
lexicographic order which plays a central r\^ole. In such systems, a
particular orbit is present if the maximum (or more generally
supremum) of the orbit is less than or equal to a given sequence:
hence, in order to decide whether or not a given dynamical feature is
present, the key question is the size of the minimum, or infimum, of
the set of maximal sequences which exhibit the feature. It is for this
reason that such {\em minimax} and {\em infimax} sequences are
important.

This paper provides a description of minimax and infimax sequences in
the lexicographic order, where the relevant dynamical feature --
closely related to rotation vectors -- is the asymptotic proportions
of the letters. In the remainder of the introduction we will first
give an informal description of the main results, and then expand on
their dynamical significance.

Given~$k\ge 2$, let $\Sigma = \{1,2,\ldots,k\}^\N$ be the space of
sequences in the letters $1,2,\ldots,k$, ordered lexicographically,
and let $\sigma\colon\Sigma\to\Sigma$ be the shift map. A sequence
$w\in\Sigma$ is said to be {\em maximal} if $\sigma^r(w)\le w$ for
all~$r\ge 0$. 

We are interested in maximal sequences for which the asymptotic
proportions of the letters are given by some $\bal\in\Delta$,
where~$\Delta$ is the set of vectors in~$\R^k$ with non-negative
entries summing to~$1$. Denote by~$\cM(\bal)$ the subset of~$\Sigma$
consisting of maximal sequences~$w$ with the property that, for each
$i$ with $1\le i\le k$, the asymptotic proportion of the letter~$i$
in~$w$ is given by~$\alpha_i$. Let~$\IM(\bal)$ denote the infimum of
the set~$\cM(\bal)$, the {\em $\bal$-infimax} sequence. This infimum,
while necessarily maximal, need not in general be an element
of~$\cM(\bal)$: when it is, it is called the {\em $\bal$-minimax}
sequence.

\medskip

The main results of the paper can be summarised as follows:

\bigskip

\noindent\textsc{Theorem~\ref{thm:infimax}. (Description of Infimaxes)}
\,\, \emph{There is an algorithm for computing~$\IM(\bal)$ (to an
  arbitrary number of letters) in terms of a sequence of
  substitutions. This sequence of substitutions is determined by the
  itinerary of~$\bal$ under a multi-dimensional continued fraction map
  $K\colon \Delta\to\Delta$.}

\bigskip

\noindent\textsc{Theorem~\ref{thm:infismin}. (Infimax or Minimax)}
\,\, \emph{The infimum $\IM(\bal)$ of $\cM(\bal)$ is a minimum, i.e. is
  an element of~$\cM(\bal)$, if and only if $\bal$ is the only point
  of~$\Delta$ with its itinerary.  }

\bigskip

We say that~$\bal$ is {\em regular} if it is the only point
of~$\Delta$ with its itinerary, and that it is {\em exceptional}
otherwise. Whether~$\bal$ is regular or exceptional appears to depend
on the growth rate of the itinerary of~$\bal$ in a delicate way: our
final result gives a flavour of this dependence.

\bigskip

\noindent\textsc{Theorem~\ref{thm:regular}. (Regular or exceptional)
} \,\, \emph{If the itinerary of~$\bal$ grows at most
  quadratically then~$\bal$ is regular; on the other hand, if it grows
  sufficiently fast then $\bal$ is exceptional.}

\bigskip

We now discuss the dynamical implications of these results in more
detail. Let~$X$ be a shift-invariant subset of~$\Sigma$. The vector
$\brho(w)\in\Delta$ of asymptotic proportions of the letters in an
element~$w$ of~$X$, if well-defined, is called the {\em rotation
  vector} of~$w$, and the collection of all of the rotation vectors of
elements of~$X$ is called the {\em rotation set} $\rho(X)$
of~$X$. This terminology is by analogy with manifold dynamics: in
fact, in the authors' forthcoming paper {\em ``New rotation sets in a
  family of torus homeomorphisms''}, these symbolic rotation vectors are
related directly to rotation vectors for torus homeomorphisms, and the
techniques developed in this paper make it possible to provide a
detailed description of all of the rotation sets which arise in a
parameterised family of torus homeomorphisms.

When~$X$ is a subshift of finite type, a theorem of Ziemian~\cite{Zie}
states that~$\rho(X)$ is a convex set with finitely many extreme
points, given by the rotation vectors of the minimal loops of the
transition diagram. While this result is useful, subshifts of finite
type are rather special, and are often ill-suited to understand
dynamical behaviour in parameterised families, since Markov partitions
can change dramatically under small changes in the map. Here we
consider a broader class: in analogy with kneading theory and
beta-shifts, we consider subshifts of the form
\[X(v) = \{w\in\Sigma\,:\,\sigma^r(w)\le v \text{ for all }r\ge 0\},\]
where~$v\in\Sigma$. In fact, since the supremum of any
shift-invariant set is a maximal sequence, and since
$X(v) = X(\sup X(v))$, it suffices to consider the case
where~$v$ is maximal, which we henceforth assume.

Now if there is some $w\in \cM(\bal)$ with $w\le v$ then it is clear that
$\bal\in\rho(X(v))$, since if $w\le v$ and $w$ is maximal than $w\in
X(v)$. Recalling that $\IM(\bal)$ denotes the infimum of all of the
$w\in \cM(\bal)$, it follows that
\[v > \IM(\bal) \, \implies \, \bal\in\rho(X(v)).\]

Similarly, it can be shown (see Lemma~\ref{lem:lower-bound} below)
that if $w$ is {\em any} (not necessarily maximal) element of~$\Sigma$
with $\brho(w) = \bal$, then the supremum of the orbit of~$w$ is at
least $\IM(\bal)$. Therefore
\[v < \IM(\bal) \, \implies \, \bal\not\in\rho(X(v)).\]

Whether or not $\bal\in\rho(X(v))$ when $v = \IM(\bal)$ depends on
whether or not $\IM(\bal)$ has rotation vector~$\bal$: that is, on
whether it is an $\bal$-minimax, or only an $\bal$-infimax. Therefore
the results of this paper make it possible to determine whether or not
$\bal\in\rho(X(v))$ by comparing $v$ with the single
sequence~$\IM(\bal)$. Moreover, since a consequence of the above
discussion is that $\rho(X(v))$ can only change as $v$ passes through
an element of the set $\IM = \{\IM(\bal)\,:\,\bal\in\Delta\}$ of
infimaxes, understanding how the structure of $\rho(X(v))$ changes as
$v$ increases is closely related to understanding the structure
of the set of initial segments of~$\IM$.

\bigskip

It is well known~\cite{Sturmian1,Sturmian2} that when $k=2$, all of
the infimaxes are minimaxes and are the Sturmian sequences studied by
Morse and Hedlund~\cite{morsehedlund1,morsehedlund2}. Thus the infimax
sequences with $k\ge 3$ letters can be seen as extensions of the two
letter Sturmians (however, in contrast to the Sturmian case, infimax
sequences when $k>2$ are very far from being balanced and are not, in
general, of Arnoux-Rauzy type~\cite{GBA}). The construction of infimax
sequences described here is reminiscent of the construction of
Sturmian sequences through their relationship with continued fraction
expansions. First there is a division-remainder procedure, similar to
the standard Euclidean algorithm, which produces a sequence $\bn$ of
non-negative integers, analogous to the partial quotients of a
continued fraction expansion (this sequence is the itinerary of the
orbit of~$\bal$ under $K\colon \Delta\to\Delta$ with respect to a
certain partition of~$\Delta$, just as the sequence of partial
quotients of the continued fraction expansion of~$\alpha\in(0,1)$ is
the itinerary of~$\alpha$ under the Gauss map).  Second, this
itinerary is used to construct a sequence of substitutions which are
applied successively to the single letter~$k$, producing a sequence of
words of increasing lengths, each of which is an initial subword of
the infimax. If~$\bal$ is a rational vector then the minimax sequence
is periodic, and is determined after finitely many steps of the
algorithm.

Section~\ref{sec:defns-statments} contains basic definitions and
precise statements of the theorems described above. Some preliminary
results are presented in Section~\ref{sec:preliminaries}, and a finite
version of the problem is then treated in Section~\ref{sec:finite}:
given non-negative integers $a_1,\ldots,a_k$, what is the smallest
maximal {\em word} which contains exactly $a_i$ occurrences of each
letter~$i$? The solution of this problem is required later in the
paper, and also introduces the main ideas in a more straightforward
context.

In Section~\ref{sec:infimax} we prove the validity of the algorithm
for determining infimax sequences, before finishing, in
Section~\ref{sec:minimax}, by considering the conditions under which
infimax sequences are minimaxes.

\medskip\medskip\medskip\medskip 

\section{Definitions, notation, and statement of results}
\label{sec:defns-statments}

Let~$k\ge 2$ be the number of letters in our alphabet
$\cA=\{1,\ldots,k\}$. We fix~$k$ throughout, and suppress the
dependence of objects on it, except in Remark~\ref{rmk:reduce-k} and
in the final part of the proof of Theorem~\ref{thm:infismin}.

Denote by~$\Sigma$ the space~$\cA^\N$ of sequences with entries
in~$\cA$: we consider~$0$ to be a natural number, so that elements~$w$
of~$\Sigma$ are indexed as $w=(w_r)_{r\ge 0}$. Order~$\Sigma$
lexicographically, and endow it with the product topology (where~$\cA$
is discrete).

Similarly, denote by~$\cA^*$ the set of non-trivial finite words over
the alphabet~$\cA$, ordered lexicographically with the convention that
any proper initial subword of $W\in\cA^*$ is greater than~$W$ (this
convention is simply to ensure that $\cA^*$ is totally ordered, and
does not affect any of the results of the
paper). Given~$W\in\cA^*$ and $i\in\cA$, write $|W|\ge 1$ for the
length of~$W$, and $|W|_i\ge 0$ for the number of occurences of the
letter~$i$ in~$W$.

If~$V,W\in\cA^*$, denote by $VW$ the concatenation of~$V$ and~$W$,
by $\overline{W} = WWWW\ldots$ the element of~$\Sigma$ given by
infinite repetition of~$W$, and by $V\overline{W}$ the element
$VWWWW\ldots$ of~$\Sigma$. An element of~$\Sigma$ of the
form~$\overline{W}$ is said to be {\em periodic}. Given $W\in\cA^*$
and $n\ge 0$, denote $W^n=WW\ldots W$ the $n$-fold repetition
of~$W$, an element of~$\cA^*$ provided that~$n>0$: if $n=0$ then
$W^n$ denotes the empty word, which will be used only when
concatenated with elements of~$\cA^*$.

If $w\in\Sigma$ and $r\ge 1$ is an integer, write $w^{(r)} =
w_0w_1\ldots w_{r-1}$, the element of~$\cA^*$ formed by the first~$r$
letters of~$w$.

The shift map $\sigma\colon\Sigma\to\Sigma$ is defined by $\sigma(w)_r
= w_{r+1}$. An element $w$ of~$\Sigma$ is said to be {\em maximal} if
it is the maximum element of its $\sigma$-orbit: that is, if
$\sigma^r(w)\le w$ for all~$r\ge 0$. We write~$\cM\subset\Sigma$ for the
set of maximal elements. Observe that~$\cM$ is a closed subset
of~$\Sigma$, for if $w\in\Sigma$ is not maximal then there is
some~$r\ge 0$ with $\sigma^r(w)>w$, and it follows that
$\sigma^r(w')>w'$ for all~$w'\in\Sigma$ sufficiently close to~$w$.

Given~$W\in\cA^*$, write $\brho(W)\in\Q^k$ for the vector whose
$i^\text{th}$ component is the proportion of the letter~$i$ in~$W$:
that is, $\brho(W)_i = |W|_i/|W|$. Let
\[\Delta = \left\{\bal\in\R_{\ge 0}^k\,:\,\alpha_k>0,\,\sum \alpha_i
= 1\right\},\] the simplex which contains these rational vectors,
with the face $\alpha_k=0$ removed, equipped with the maximum metric
$d_\infty$. Removing the face~$\alpha_k=0$ makes the statements of the
results of the paper cleaner, and clearly if~$\alpha_k=0$ then the
problem reduces to one with a smaller value of~$k$.

Given~$\bal\in\Delta$, denote by~$\cR(\bal)$ the set of elements
of~$\Sigma$ with asymptotic proportions of letters~$\bal$:
\[
\cR(\bal) = \left\{ w\in\Sigma\,:\, \brho\left(w^{(r)}\right) \to
\bal \text{ as }r\to\infty\right\} \subset\Sigma.
\]

\begin{remark}
\label{rmk:head-and-tail}
$\cR(\bal)$ is not closed in~$\Sigma$. For example, when~$k=2$ the
sequence $2^r\,\overline{21}$ is an element of $\cR(1/2,1/2)$ for
all~$r\ge 0$, but $2^r\,\overline{21}\to
\overline{2}\not\in\cR(1/2,1/2)$ as $r\to\infty$. This is a
consequence of the more general observation that the asymptotic
proportions of elements of~$\Sigma$, which depend on their tails, do
not interact well with the order and topology on~$\Sigma$, which are
defined using the heads of its elements.
\end{remark}

We define also the set of maximal sequences with proportions~$\bal$,
\[
\cM(\bal) = \cM\cap \cR(\bal).\]
Following on from Remark~\ref{rmk:head-and-tail}, observe that it is
easy to construct elements of~$\cM(\bal)$. Provided that~$\bal
\not=(0,0,\ldots,0,1)$ then there are elements of~$\cR(\bal)$ for which
there is an upper bound~$N$ on the number of consecutive occurences of the
letter~$k$, and prepending $k^{N+1}$ to such an element yields an
element of~$\cM(\bal)$. On the other hand, if $\bal=(0,0,\ldots,0,1)$
then $\overline{k}\in\cM(\bal)$. In particular, since every non-empty
subset of~$\Sigma$ has an infimum, we can define the {\em
  $\bal$-infimax} sequence~$\IM(\bal)$ by
\[\IM(\bal) = \inf \cM(\bal).\]

$\IM(\bal)$ is necessarily an element of~$\cM$, but need not be an
element of~$\cR(\bal)$, which is not closed in~$\Sigma$. In the
case that it is (and so is an element of~$\cM(\bal)$), we call it the
{\em $\bal$-minimax sequence}.

\medskip\medskip

Having introduced the basic objects of study, we now turn to the
algorithm for constructing $\IM(\bal)$, which is given in terms of the
itinerary of~$\bal$ under a certain dynamical system $K\colon
\Delta\to\Delta$, defined piecewise on the subsets
\[\Delta_n = \left\{\bal\in\Delta\,:\,\left \lfloor
\frac{\alpha_1}{\alpha_k} \right \rfloor = n \right\} \subset \Delta
\qquad (n\in\N),\]
where $\lfloor x\rfloor$ denotes the integer part of~$x$.
First, let $K_n\colon\Delta_n\to\Delta$ be given by 
\begin{equation}
\label{eq:Kn}
K_n(\bal) = \left(
\frac{\alpha_2}{1-\alpha_1}, \,
\frac{\alpha_3}{1-\alpha_1}, \,
\ldots, \,
\frac{\alpha_{k-1}}{1-\alpha_1}, \,\,\,
\frac{\alpha_1-n\alpha_k}{1-\alpha_1}, \,
\frac{(n+1)\alpha_k - \alpha_1}{1-\alpha_1}
\right).
\end{equation}
Each~$K_n$ is a {\em projectivity}: an embedding induced on a subset
of~$\R^k$ by the action of an element of $\operatorname{GL}_{k+1}(\R)$ on projective
coordinates in $\RP^k$. As such, it sends convex sets to convex
sets. Its inverse $K_n^{-1}\colon \Delta\to\Delta_n$ is given by
\begin{equation}
\label{eq:KnI}
K_n^{-1}(\bal) = 
\left(
\frac{(n+1)\alpha_{k-1}+n\alpha_k}{D},\,\,
\frac{\alpha_1}{D},\,\,
\frac{\alpha_2}{D},\,\,
\ldots,\,
\frac{\alpha_{k-2}}{D},\,\,
\frac{\alpha_{k-1}+\alpha_k}{D}
\right),
\end{equation}
where~$D=(n+1)\alpha_{k-1}+n\alpha_k+1$.

Let~$J\colon\Delta\to\N$ be given by $J(\bal) =
\lfloor\alpha_1/\alpha_k\rfloor$, so that
$\bal\in\Delta_{J(\bal)}$. Then define $K\colon\Delta\to\Delta$
by \[K(\bal) = K_{J(\bal)}(\bal),\] which is a multi-dimensional
continued fraction map~\cite{Schweiger}.
Associated to $K$ is an itinerary map $\Phi\colon\Delta\to\N^\N$
defined by
\[\Phi(\bal)_r = J(K^r(\bal)) \qquad(r \in\N).\]

We shall see that the infimax sequence $\IM(\bal)$ is obtained from a
sequence of substitutions associated with $\Phi(\bal)$. Recall that a
{\em substitution on $\cA$} is a map
$\Lambda\colon\cA\to\cA^*$. Overloading notation, this induces maps
$\Lambda\colon\cA^*\to\cA^*$ and $\Lambda\colon\Sigma\to\Sigma$ which
replace each letter of the input sequence with its image:
$\Lambda(w_0w_1w_2\ldots) =
\Lambda(w_0)\Lambda(w_1)\Lambda(w_2)\ldots$. Define
substitutions~$\Lambda_n$ for each $n\in\N$ by
\begin{equation}
\label{eq:lambda_n}
\Lambda_n\colon \qquad
\left\{
\begin{array}{lll}
  i & \mapsto & (i+1) \qquad\qquad \text{ if }1\le i\le k-2,\\
  (k-1) & \mapsto & k\, 1^{n+1}, \qquad\qquad\text{and} \\
  k & \mapsto & k\, 1^n. 
\end{array}
\right.
\end{equation}

Observe that the expression~(\ref{eq:KnI}) for $K_n^{-1}(\bal)$
results precisely from translating~(\ref{eq:lambda_n}) in such a way
as to give the proportions of each letter in~$\Lambda_n(w)$ in
terms of the proportions in~$w$, that is,
\[w\in \cR(\bal) \iff \Lambda_n(w) \in \cR(K_n^{-1}(\bal)).\]

Given $\bn\in\N^\N$, define substitutions $\Lambda_{\bn,r}$ for
each~$r\in\N$ by
\[\Lambda_{\bn,r} =
\Lambda_{n_0}\circ\Lambda_{n_1}\circ\cdots\circ\Lambda_{n_r}.\]
Then define a map $\Wmap\colon \N^\N\to\Sigma$ by
\[\Wmap(\bn) = \lim_{r\to\infty} \Lambda_{\bn,r}\left(\overline{k}\right) =
\lim_{r\to\infty}\overline{\Lambda_{\bn,r}(k)},\] 
where in the first definition $\Lambda_{\bn,r}$ is regarded as a map
$\Sigma\to\Sigma$, and in the second as a map $\cA^*\to\cA^*$. The
limit exists since $\Lambda_{n_{r+1}}(k)$ begins with the letter~$k$,
and hence $\Lambda_{\bn,r}(k)$ is an initial subword of
$\Lambda_{\bn,r+1}(k)$ for all~$r$.

The first main theorem of the paper states that, for every
$\bal\in\Delta$, the corresponding infimax sequence is given
by~$\Wmap(\Phi(\bal))$.

\bigskip

\noindent\textsc{Theorem~\ref{thm:infimax}.} \,\, \emph{Let
  $\bal\in\Delta$. Then $\IM(\bal) = \Wmap(\Phi(\bal))$.}

\bigskip

The question of whether or not the infimax sequence is a minimax
(that is, of whether or not an $\bal$-minimax exists) is
answered by the following result:

\bigskip

\noindent\textsc{Theorem~\ref{thm:infismin}.} \,\, \emph{Let
  $\bal\in\Delta$. Then
\begin{enumerate}[a)]
\item $\Phi^{-1}(\Phi(\bal))\subset\Delta$ is a $d$-dimensional
  simplex for some~$d$ with $0\le d\le k-2$. 
\item $\IM(\bal)$ is the minimum of $\cM(\bal)$ if and only if
  $\Phi^{-1}(\Phi(\bal))$ is a point.
\end{enumerate}
}

\bigskip

There is therefore a fundamental distinction between {\em regular}
elements $\bal$ of $\Delta$, for which $\Phi^{-1}(\Phi(\bal))$ is a
point, and {\em exceptional} elements for which this is not the
case. That both possibilities occur is the content of the following
theorem.

\bigskip

\noindent\textsc{Theorem~\ref{thm:regular}.} \,\,
\emph{Let~$\bal\in\Delta$ and $\bn = \Phi(\bal)$.
\begin{enumerate}[a)]
\item If there is some~$C$ such that $0<n_r \le Cr^2$ for all~$r$, then
  $\bal$ is regular.
\item If $k\ge 3$ and $n_r \ge 2^{r+2}\prod_{i=0}^{r-1}(n_i+2)$ for
  all~$r\ge 1$, then 
  $\Phi^{-1}(\Phi(\bal))$ is a simplex of dimension~$k-2$, so that
  $\bal$ is exceptional.
\end{enumerate}}

\bigskip

Notice that if~$k=2$ then Theorem~\ref{thm:infismin}~a) gives that
every~$\bal\in\Delta$ is regular.

The growth condition of Theorem~\ref{thm:regular}~b) is designed for
ease of proof and can be improved without difficulty. Providing a
precise characterisation of the set of regular~$\bal$ when $k\ge 3$,
by contrast, appears to be a challenging problem.

\bigskip\bigskip

The substitutions~$\Lambda_n$ which play a central r\^ole here appear
in a different context in papers of Bruin and Troubetzkoy~\cite{BT}
and Bruin~\cite{Bruin} (dealing respectively with the case~$k=3$ and
the case~$k\ge 3$). These papers are concerned with a certain class of
{\em interval translation mappings} (which are defined similarly to
interval exchange mappings except that the images of the monotone
pieces can overlap). The most interesting case is when the maps are of
infinite type, which means that the attractor is a Cantor set. An
interval translation mapping with $k$ monotone pieces which is of
infinite type can be renormalized infinitely often, with each
renormalization being described by a substitution on the space of
$k$-symbol itineraries. The dynamics on the attractor is therefore
given by the subshift generated by the sequence of substitutions
corresponding to the sequence of renormalizations.

It turns out that the substitutions arising from renormalization of
$k$-piece interval translation mappings in the class considered by
Bruin and Troubetzkoy are exactly the substitutions~$\Lambda_n$
of~(\ref{eq:lambda_n}). Their results provide extensions of some of
the results of this paper, particularly in the case~$k=3$: see
Remarks~\ref{rmk:infismin}~b) and~\ref{rmk:regular}~a).

\section{Preliminaries}
\label{sec:preliminaries}
In this section we state some basic facts about the maps defined in
Section~\ref{sec:defns-statments}. The proofs are routine, and could
be omitted on first reading. The crucial result for what follows is
Corollary~\ref{cor:comp-LSC}, which asserts that the map
$\Wmap\circ\Phi$ is lower semi-continuous.

\begin{lemma}
\label{lem:actionLambda}
Let~$n\in\N$. Then the substitution $\Lambda_n\colon\cA^*\to\cA^*$ is
strictly order-preserving. Similarly $\Lambda_n\colon\Sigma\to\Sigma$ is
strictly order-preserving, with \mbox{$\Lambda_n(\cM) \subseteq \cM$}.
\end{lemma}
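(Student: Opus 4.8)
The plan is to deduce all three assertions from the single structural fact that $\Lambda_n$ respects concatenation, together with three elementary observations about its action on letters. \emph{(i)} Viewed as elements of~$\cA^*$, the letter images satisfy $\Lambda_n(1)<\Lambda_n(2)<\dots<\Lambda_n(k)$: for $i\le k-2$ one has $\Lambda_n(i)=i+1$, so these images are strictly increasing and each begins with a letter~$<k$, while $\Lambda_n(k-1)=k\,1^{n+1}$ and $\Lambda_n(k)=k\,1^n$ both begin with~$k$ and $\Lambda_n(k-1)<\Lambda_n(k)$ because $k\,1^n$ is a proper initial subword of $k\,1^{n+1}$, hence greater by the convention on~$\cA^*$. \emph{(ii)} The first letter of $\Lambda_n(i)$ is $\min(i+1,k)\ge 2$ for every $i\in\cA$, and whenever $\Lambda_n(i)$ has length greater than~$1$ all of its letters after the first equal~$1$. \emph{(iii)} For $i<j$, the word $\Lambda_n(i)$ is never a proper initial subword of $\Lambda_n(j)$, and the only pair $i<j$ for which $\Lambda_n(j)$ is a proper initial subword of $\Lambda_n(i)$ is $i=k-1$, $j=k$, where $\Lambda_n(k)=k\,1^n$ is $\Lambda_n(k-1)=k\,1^{n+1}$ with its final letter deleted. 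All of \emph{(i)}--\emph{(iii)} are immediate from~(\ref{eq:lambda_n}); the one point needing care below will be this last prefix collision and its interplay with the initial-subword convention on~$\cA^*$.

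For strict order-preservation on~$\Sigma$, take $v<w$, let $j$ be the first index with $v_j\ne w_j$ (so $v_j<w_j$), and let $u=v_0\cdots v_{j-1}$ be their common prefix. Since $\Lambda_n$ respects concatenation, $\Lambda_n(v)=\Lambda_n(u)\,\Lambda_n(v_j)\,\Lambda_n(\sigma^{j+1}v)$, and similarly for~$w$, so it suffices to compare $\Lambda_n(v_j)\,\Lambda_n(\sigma^{j+1}v)$ with $\Lambda_n(w_j)\,\Lambda_n(\sigma^{j+1}w)$. If neither of $\Lambda_n(v_j)$, $\Lambda_n(w_j)$ is an initial subword of the other, then by~\emph{(i)} they already differ at a position carrying the smaller letter in $\Lambda_n(v_j)$, and we are done. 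Otherwise \emph{(iii)} forces $v_j=k-1$, $w_j=k$: the two words then agree on the segment $\Lambda_n(u)\,k\,1^n$, after which the first continues with the letter~$1$ while the second continues with $\Lambda_n(\sigma^{j+1}w)$, whose leading letter is $\ge 2$ by~\emph{(ii)} (this is where non-emptiness of $\sigma^{j+1}w$ is used). In both cases $\Lambda_n(v)<\Lambda_n(w)$. The argument on~$\cA^*$ is the same, with two adjustments forced by the convention: if $V<W$ with $W$ a proper initial subword of~$V$ then $\Lambda_n(W)$ is a proper initial subword of $\Lambda_n(V)$, so $\Lambda_n(V)<\Lambda_n(W)$; and in the case $v_j=k-1$, $w_j=k$ one must allow the part of $W$ after position~$j$ to be empty, whereupon $\Lambda_n(W)$ is again a proper initial subword of $\Lambda_n(V)$ and the convention gives $\Lambda_n(V)<\Lambda_n(W)$.

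For $\Lambda_n(\cM)\subseteq\cM$, fix $w\in\cM$ and write $\Lambda_n(w)=\Lambda_n(w_0)\Lambda_n(w_1)\cdots$ with block boundaries $0=p_0<p_1<p_2<\cdots$, where $p_{r+1}-p_r=|\Lambda_n(w_r)|\ge 1$; since $p_r\to\infty$, every $s\ge 0$ either equals some $p_r$ or satisfies $p_r<s<p_{r+1}$ for a unique~$r$. In the first case $\sigma^s(\Lambda_n(w))=\Lambda_n(\sigma^r(w))$, which is $\le\Lambda_n(w)$ since $\sigma^r(w)\le w$ and $\Lambda_n$ preserves order. In the second case $|\Lambda_n(w_r)|=p_{r+1}-p_r>1$, so by~\emph{(ii)} the letters of this block after the first are all~$1$; hence $\sigma^s(\Lambda_n(w))$ begins with~$1$, whereas $\Lambda_n(w)$ begins with a letter $\ge 2$ by~\emph{(ii)}, giving $\sigma^s(\Lambda_n(w))<\Lambda_n(w)$. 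Thus $\sigma^s(\Lambda_n(w))\le\Lambda_n(w)$ for every $s\ge 0$, i.e.\ $\Lambda_n(w)\in\cM$. The only genuinely delicate step in all of this is keeping track of the prefix collision between $\Lambda_n(k-1)$ and $\Lambda_n(k)$ under the $\cA^*$ convention; everything else is a direct consequence of $\Lambda_n$ respecting concatenation and being increasing on letters.
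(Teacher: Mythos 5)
Your proof is correct and follows essentially the same route as the paper's: the order-preservation arguments turn on the same $k-1$ versus $k$ prefix collision, resolved by observing that the next block of the larger image begins with a letter other than~$1$, and your treatment of the end-of-word convention on $\cA^*$ matches the paper's case analysis. Your maximality argument is a direct verification (block-boundary shifts reduce to order-preservation, interior shifts begin with~$1$ and so lose to the first letter, which is $\ge 2$) rather than the paper's contradiction argument using the fact that $k$ occurs only block-initially, but it rests on the same structural observations and is, if anything, slightly cleaner.
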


\begin{proof}
To show that~$\Lambda_n\colon\cA^*\to\cA^*$ is strictly order-preserving,
suppose that $V,W\in\cA^*$ with \mbox{$V<W$}. Then either $W$ is a proper
initial subword of~$V$, in which case $\Lambda_n(W)$ is a proper
initial subword of $\Lambda_n(V)$, so that $\Lambda_n(V)<\Lambda_n(W)$
as required; or there is some~$R\ge 0$ with $V_r=W_r$ for $0\le r<R$
and $V_R<W_R$. If $V_R\le {k-2}$ then it is obvious that $\Lambda_n(V)
< \Lambda_n(W)$. On the other hand, if $V_R=k-1$ and $W_R=k$, then
$\Lambda_n(V) = \Lambda_n(V_0\ldots V_{R-1})k1^{n+1}\ldots$, and
$\Lambda_n(W) = \Lambda_n(V_0\ldots V_{R-1})k1^n\ldots$. If $W$ has
length~$R+1$ then $\Lambda_n(W)$ is a proper initial subword of
$\Lambda_n(V)$, so that $\Lambda_n(V)<\Lambda_n(W)$; while if $W$ has
length greater than~$R+1$, then the letter following
$\Lambda_n(V_0\ldots V_{R-1})k1^n$ in $\Lambda_n(W)$, being the first
letter in the $\Lambda_n$-image of a letter, is not~$1$, so again
$\Lambda_n(V)<\Lambda_n(W)$ as required.

The proof that $\Lambda_n\colon\Sigma\to\Sigma$ is strictly
order-preserving is similar but simpler, since there is no longer any
need to worry about the ends of the words.

To show that $\Lambda_n(\cM)\subseteq \cM$, let~$w\in \cM$. Consider $w_0$,
the first, and hence largest, letter in~$w$. If $w_0 < k-1$ then
$\Lambda_n(w_r) = w_r + 1$ for all~$r$, and it is clear that
$\Lambda_n(w)\in \cM$. Assume therefore that $w_0\ge k-1$, so that
$\Lambda_n(w)$ begins with the letter~$k$. Suppose for a contradiction
that $\Lambda_n(w)$ is not maximal, so that $\Lambda_n(w) = Vv$ for
some $V\in\cA^*$ and $v\in\Sigma$ with $v>Vv$. Since~$V_0=k$ we must
have $v_0=k$. Since~$k$ can only occur as the first letter in the
$\Lambda_n$-image of a letter, it follows that $w = Uu$ with
$\Lambda_n(U) = V$ and $\Lambda_n(u)=v$. Since $w$ is maximal we have
$u\le Uu$, and since $\Lambda_n$ is order-preserving we have $v =
\Lambda_n(u)\le \Lambda_n(Uu) = Vv$, which is the required
contradiction.
\end{proof}

The following lemma is an immediate consequence of the
definition~(\ref{eq:lambda_n}) of the substitutions~$\Lambda_n$.

\begin{lemma}
\label{lem:eventually-k}
Let $n_0,n_1,\ldots,n_{k-2}$ be any natural numbers. Then
$\Lambda_{n_0}\circ\Lambda_{n_1}\circ \cdots
\circ\Lambda_{n_{k-2}}(W)$ has initial letter~$k$ for all~$W\in\cA^*$.
\end{lemma}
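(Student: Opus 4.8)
The plan is to track how the initial letter of a word evolves under a single substitution, and then iterate. First I would observe that, for any $W\in\cA^*$ and any $n\in\N$, the initial letter of $\Lambda_n(W)$ is determined by the initial letter $W_0$ of $W$ alone, since $\Lambda_n(W)=\Lambda_n(W_0)\Lambda_n(W_1)\cdots$ begins with $\Lambda_n(W_0)$. Inspecting~(\ref{eq:lambda_n}), this initial letter equals $W_0+1$ when $1\le W_0\le k-2$, and equals $k$ when $W_0\in\{k-1,k\}$; in every case it is $\min(W_0+1,\,k)$.

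Next I would apply this observation inductively along the composition. The map $\Lambda_{n_0}\circ\Lambda_{n_1}\circ\cdots\circ\Lambda_{n_{k-2}}$ is a composition of exactly $k-1$ substitutions of the form $\Lambda_n$ (one for each index $0,1,\ldots,k-2$). Writing $j_0=W_0$ for the initial letter of $W$, and $j_m$ for the initial letter of $\Lambda_{n_{k-1-m}}\circ\cdots\circ\Lambda_{n_{k-2}}(W)$, the previous paragraph gives $j_m=\min(j_{m-1}+1,\,k)$ for $1\le m\le k-1$, whence $j_{k-1}=\min(j_0+k-1,\,k)$. Since $j_0=W_0\ge 1$ we have $j_0+k-1\ge k$, so $j_{k-1}=k$, which is precisely the assertion of the lemma.

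There is essentially no obstacle here — as the remark preceding the statement indicates, the result is immediate from~(\ref{eq:lambda_n}). The only points requiring a moment's care are the bookkeeping that the stated composition contains exactly $k-1$ substitutions, and the elementary verification that one application of any $\Lambda_n$ raises the leading letter by one until it saturates at $k$; both follow at once from the definition of the~$\Lambda_n$.
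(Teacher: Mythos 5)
Your proof is correct and follows the same route as the paper's: one application of any $\Lambda_n$ sends the leading letter $i$ to $i+1$ (or fixes it at $k$), so $k-1$ applications starting from any letter $\ge 1$ saturate at $k$. The paper states this in one line; your version merely makes the induction and the count of $k-1$ substitutions explicit.
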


\begin{proof}
If $W_0 = i<k$, then $(\Lambda_n(W))_0=i+1$ for all~$n\in\N$; while if
$W_0=k$ then $(\Lambda_n(W))_0=k$ also.
\end{proof}

Endow~$\N^\N$ with the product topology, and order it {\em
  reverse lexicographically}: that is, lexicographically with the
convention that $0>1>2>3>\cdots$. This convention is to ensure that
$\Wmap\colon \N^\N\to\Sigma$ is order-preserving.

\begin{lemma}
\label{lem:props_w}
$\Wmap\colon \N^\N\to \Sigma$ is continuous and strictly order-preserving, with
image contained in~$\cM$.
\end{lemma}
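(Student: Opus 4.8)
I would verify the three claims separately; the containment in $\cM$ and the order‑preservation are quick, while the continuity is the substantive point.

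\medskip
\emph{Set‑up.} The sequence $\overline k$ is maximal, and $\Lambda_n(\cM)\subseteq\cM$ by Lemma~\ref{lem:actionLambda}, so each $\Lambda_{\bn,r}(\overline k)\in\cM$; since $\cM$ is closed in $\Sigma$, so is its limit $\Wmap(\bn)$, which proves the last assertion. Each $\Lambda_n\colon\Sigma\to\Sigma$ is continuous (two sequences agreeing on an initial subword of length~$N$ have images agreeing on an initial subword of length at least~$N$), so letting $r\to\infty$ in $\Lambda_{\bn,r}(\overline k)=\Lambda_{n_0}\bigl(\Lambda_{\sigma\bn,r-1}(\overline k)\bigr)$ gives the recursion $\Wmap(\bn)=\Lambda_{n_0}(\Wmap(\sigma\bn))$, and, iterating, $\Wmap(\bn)=\Lambda_{\bn,R-1}(\Wmap(\sigma^R\bn))$ for every $R\ge 1$. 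As every $\Wmap(\bm)$ begins with the letter~$k$ (because each $\Lambda_{\bm,r}(\overline k)$ does), this shows in particular that $\Lambda_{\bn,R-1}(k)$ is an initial subword of $\Wmap(\bn)$.

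\medskip
\emph{Strict monotonicity.} Given $\bm<\bn$ in the reverse lexicographic order, let $R$ be least with $m_R\ne n_R$, so $m_i=n_i$ for $i<R$ and $m_R>n_R$ as integers. Then $\Lambda_{\bm,R-1}=\Lambda_{\bn,R-1}$ is strictly order‑preserving (Lemma~\ref{lem:actionLambda}), so by the recursion it suffices to show $\Wmap(\sigma^R\bm)<\Wmap(\sigma^R\bn)$; i.e.\ one may assume $R=0$ and $m_0>n_0$. Now $\Wmap(\bm)=\Lambda_{m_0}(\Wmap(\sigma\bm))$ begins with $k1^{m_0}$, while $\Wmap(\bn)=\Lambda_{n_0}(\Wmap(\sigma\bn))$ begins with $k1^{n_0}$ followed by the first letter of $\Lambda_{n_0}(v)$, where $v\ge 1$ is the letter in position~$1$ of $\Wmap(\sigma\bn)$; by~(\ref{eq:lambda_n}) this is a letter $\ge 2$. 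Since $m_0\ge n_0+1$, the two sequences agree in positions $0,\dots,n_0$ and differ in position $n_0+1$, where $\Wmap(\bm)$ has a~$1$; hence $\Wmap(\bm)<\Wmap(\bn)$.

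\medskip
\emph{Continuity.} It suffices to show that for each~$N$ there is an~$R$ such that $\Wmap(\bm)$ and $\Wmap(\bn)$ agree in their first~$N$ letters whenever $\bm$ and $\bn$ agree in their first~$R$ coordinates. In that situation both $\Wmap(\bm)$ and $\Wmap(\bn)$ begin with $\Lambda_{\bn,R-1}(k)$, and an induction on~$R$ from~(\ref{eq:lambda_n}) — $\Lambda_n$ sends $k$ to a length‑$(n+1)$ word beginning with $k$, and every letter to a nonempty word — gives $|\Lambda_{\bn,R-1}(k)|\ge 1+\sum_{i<R}n_i$. If $\sum_i n_i=\infty$ this tends to $\infty$, so we are done. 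Otherwise $\bn$ is eventually~$0$, say $n_i=0$ for $i>t$; then the recursion with $\sigma^{t+1}\bn=(0,0,\dots)$, for which $\Wmap$ is $\overline k$ since $\Lambda_0(k)=k$, gives $\Wmap(\bn)=\Lambda_{\bn,t}(\overline k)=\overline W$ with $W=\Lambda_{\bn,t}(k)$, and for $\bm$ agreeing with $\bn$ in its first $R>t$ coordinates, $\Wmap(\bm)=\Lambda_{\bn,t}\bigl(\Lambda_0^{\,R-1-t}(\Wmap(\sigma^R\bm))\bigr)$. The sequence $\Wmap(\sigma^R\bm)$ begins with $k$; if it is $\overline k$ then $\Wmap(\bm)=\overline W=\Wmap(\bn)$, and otherwise it begins $k^p u\cdots$ with $p\ge 1$ and $1\le u\le k-1$, and — since $\Lambda_0$ fixes $k$ and increments every smaller letter, so that its iterates applied to a letter $<k$ eventually produce and then lengthen a block of $k$'s — one checks that $\Lambda_0^{\,s}$ of such a sequence begins with $k^{\mu(s)}$ for some $\mu(s)\to\infty$ depending only on $s$; hence $\Wmap(\bm)$ begins with $\Lambda_{\bn,t}(k^{\mu(R-1-t)})=W^{\mu(R-1-t)}$, an initial subword of $\overline W=\Wmap(\bn)$ whose length tends to $\infty$ with $R$. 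I expect this last, degenerate case — where the substitutions fail to lengthen words and one must instead use the contraction of $\Lambda_0$ towards $\overline k$ — to be the only real obstacle; the remaining steps are routine bookkeeping.
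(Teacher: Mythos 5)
Your proof is correct and follows essentially the same route as the paper: closedness of $\cM$ for the image, the length bound $1+\sum_{i<R}n_i$ for continuity when the itinerary sum diverges, the growth of the leading block of $k$'s under $\Lambda_0^s$ (one extra $k$ every $k-1$ iterations) for eventually-zero itineraries, and the comparison of $k1^{m_0}$ against $k1^{n_0}(\text{letter}\ge 2)$ for strict monotonicity. The only cosmetic difference is that you organise everything around the recursion $\Wmap(\bn)=\Lambda_{\bn,R-1}(\Wmap(\sigma^R\bn))$ on infinite sequences, whereas the paper argues directly with finite initial subwords.
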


\begin{proof}
Let~$\bn\in\N^\N$.

To show that $\Wmap(\bn)\in \cM$, observe that
$\Lambda_{\bn,r}\left(\overline{k}\right)\in \cM$ for each~$r$ by
Lemma~\ref{lem:actionLambda}. The result follows since~$\cM$ is closed
in~$\Sigma$.

To show that~$\Wmap$ is continuous at~$\bn$, observe that since
$\Lambda_n(k)=k1^n$, the word $L_{\bn,r}:=\Lambda_{\bn,r}(k)$ has
length at least $1+\sum_{s=0}^r n_s$. Therefore if $\bm\in\N^\N$
satisfies $\bm^{(r+1)}=\bn^{(r+1)}$, then $\Wmap(\bm)$ and $\Wmap(\bn)$
agree to at least $1+\sum_{s=0}^r n_s$ letters. This establishes that
$\Wmap$ is continuous at~$\bn$ provided that $n_s\not=0$ for
arbitrarily large~$s$.

To show continuity at~$\bn$ in the case where $\bn = n_0\ldots
n_{r-1}\overline{0}$ for some $r\ge 0$, observe that, for $R\ge k-1$,
\[\Lambda_0^R(k1) = \Lambda_0^{R-1}(k2) = \cdots =
\Lambda_0^{R-k+2}(k\,(k-1)) = \Lambda_0^{R-k+1}(k\,k\,1),\] and,
repeating the argument, $\Lambda_0^R(k1)$ has initial subword
$k^{1+\lfloor R/(k-1)\rfloor}$. Now if $\bm\not=\bn$ is very
close to~$\bn$, then $\bm = n_0\ldots n_{r-1}\,0^R\,m_{r+R}\ldots$, where
$R$ is very large and $m_{r+R}>0$. It follows that
\[L_{\bm,r+R} = \Lambda_{\bn,r-1}(\Lambda_0^R(k1\ldots)) =
\Lambda_{\bn,r-1}(k^{1+\lfloor R/(k-1)\rfloor}\,\ldots)\]
agrees with $S(\bn)=\Lambda_{\bn,r-1}\left(\overline{k}\right)$ to at
least $1 + \lfloor R/(k-1)\rfloor$ letters, establishing continuity
at~$\bn$ as required.

To show that $\Wmap$ is strictly order-preserving, let $\bm\in\N^\N$
with $\bm < \bn$, so that there is some $r\in\N$ with
$\bm^{(r)}=\bn^{(r)}$ but $m_r>n_r$ (since~$\N^\N$ is ordered
reverse lexicographically). Then
$\Lambda_{\bn,r-1}(\Lambda_{n_r}(k\ell))$ is an initial subword of
$\Wmap(\bn)$ for some letter~$\ell\in\cA$, while
$\Lambda_{\bn,r-1}(\Lambda_{m_r}(k))$ is an initial subword of
$\Wmap(\bm)$. Now $\Lambda_{m_r}(k) = k1^{m_r} <
k1^{n_r}\Lambda_{n_r}(\ell) = \Lambda_{n_r}(k\ell)$ since $m_r>n_r$,
so that $\Wmap(\bm)<\Wmap(\bn)$ by Lemma~\ref{lem:actionLambda} as
required.
\end{proof}

Using the definitions of the product topology on~$\Sigma$ and the
lexicographical order on~$\cA^*$, the standard definition of lower
semi-continuity for functions from a metric space~$X$ into~$\Sigma$
can be phrased as follows: $f\colon X\to\Sigma$ is lower
semi-continuous at~$x\in X$ if
\[
\forall R\in\N,\,\exists\epsilon>0,\quad d(x,y)<\epsilon \implies
f(y)^{(R)} \ge f(x)^{(R)}. 
\]
Similarly, $f\colon X\to\N^\N$ is lower semi-continuous at~$x$ if the
same condition holds, bearing in mind that the $\ge$ should be
interpreted reverse lexicographically. 

Although the itinerary
map~$\Phi\colon\Delta\to\N^\N$ is discontinuous at all preimages
under~$K$ of the discontinuity set of~$K$, it is everywhere lower
semi-continuous:

\begin{lemma}
\label{lem:LSC}
$\Phi\colon\Delta\to\N^\N$ is lower semi-continuous.
\end{lemma}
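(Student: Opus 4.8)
The plan is to reduce lower semi-continuity of $\Phi$ to an understanding of the behaviour of the map $K$ near its discontinuity set, and then to iterate. Fix $\bal\in\Delta$ and $R\in\N$; I must produce $\epsilon>0$ so that $d_\infty(\bal,\bbeta)<\epsilon$ forces $\Phi(\bbeta)^{(R)}\ge\Phi(\bal)^{(R)}$ reverse-lexicographically. Since $\Phi(\bal)_r = J(K^r(\bal))$, it suffices to control the first $R$ iterates of $K$. The key observation is the following dichotomy about $K$ at a point $\bgamma\in\Delta$: either $\bgamma$ lies in the interior of some $\Delta_n$, in which case $J$ is locally constant near $\bgamma$ and $K=K_n$ is continuous there; or $\bgamma$ lies on the common boundary of $\Delta_n$ and $\Delta_{n+1}$ (where $\alpha_1/\alpha_k$ is exactly the integer $n+1$), in which case $J(\bgamma)=n+1$ but points just below the boundary have $J=n$ — and since in reverse-lexicographic order $n>n+1$, these nearby points have itinerary first-coordinate $\emph{larger}$ than that of $\bgamma$. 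So $J$ itself is lower semi-continuous, and moreover, crucially, $K_n$ and $K_{n+1}$ agree on the shared boundary face (one checks directly from~(\ref{eq:Kn}) that setting $\alpha_1=(n+1)\alpha_k$ makes the two formulas coincide), so $K$ extends continuously across each boundary component when restricted to the closed piece $\overline{\Delta_n}$.

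The main step is then an inductive argument on $R$. I would prove: for every $\bgamma\in\Delta$ and every $R$, there is $\epsilon>0$ such that $d_\infty(\bgamma,\bdelta)<\epsilon$ implies both $\Phi(\bdelta)^{(R)}\ge\Phi(\bgamma)^{(R)}$ and, in the case of equality of these initial segments, $K^R(\bdelta)$ is close to $K^R(\bgamma)$. For $R=1$ this is exactly the dichotomy above. For the inductive step, given the claim for $R$, look at the first coordinate: if $\Phi(\bdelta)_0>\Phi(\bgamma)_0$ we are already done; otherwise $\Phi(\bdelta)_0=\Phi(\bgamma)_0=n$, and then $\bdelta$ lies in $\overline{\Delta_n}$ near $\bgamma$, so by continuity of $K_n$ on the closed piece, $K(\bdelta)=K_n(\bdelta)$ is close to $K(\bgamma)$; now apply the inductive hypothesis at the point $K(\bgamma)$ to the remaining $R-1$ coordinates. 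Composing the two smallness requirements (choose $\epsilon$ small enough that $K_n$ maps the $\epsilon$-ball around $\bgamma$ into the $\epsilon'$-ball around $K(\bgamma)$, where $\epsilon'$ is the constant supplied by the inductive hypothesis) gives the result.

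The point I expect to require the most care is the \emph{boundary bookkeeping}: verifying cleanly that $K_n$ and $K_{n+1}$ agree on $\{\bal:\alpha_1=(n+1)\alpha_k\}$, that this is the only discontinuity structure of $K$ (the pieces $\Delta_n$ tile $\Delta$ with disjoint interiors and the only overlap of closures is between consecutive indices), and that the orientation of the jump is always in the favourable direction under the reverse-lexicographic convention — i.e. that crossing from $\Delta_{n+1}$ into $\Delta_n$ can only decrease $J$, never increase it, relative to a point sitting exactly on the wall. One also has to note that nothing bad happens at the removed face $\alpha_k=0$ since that face is not in $\Delta$ and $K$ maps $\Delta$ into $\Delta$. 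A secondary subtlety is that as $n\to\infty$ the pieces $\Delta_n$ accumulate on $\{\alpha_k=0\}\cap\overline\Delta$, but this accumulation set lies outside $\Delta$, so for any fixed $\bgamma\in\Delta$ only finitely many pieces meet a small neighbourhood of $\bgamma$ and the argument is unaffected. Once these facts are in place, the induction is routine.
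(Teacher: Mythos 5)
Your proof is correct and follows essentially the same route as the paper's: induction on $R$, with the base case being the lower semi-continuity of $J$ (nearby points lie in $\Delta_{J(\bal)}\cup\Delta_{J(\bal)-1}$, which is the favourable direction under the reverse-lexicographic convention) and the inductive step using continuity of $K_{J(\bal)}$ on its piece. One side remark you label ``crucial'' is in fact false, though harmless: $K_n$ and $K_{n+1}$ do \emph{not} agree on the wall $\alpha_1=(n+1)\alpha_k$, since there the last two coordinates of $K_n(\bal)$ are $\bigl(\alpha_k/(1-\alpha_1),\,0\bigr)$ while those of $K_{n+1}(\bal)$ are $\bigl(0,\,\alpha_k/(1-\alpha_1)\bigr)$; your induction never uses this, because in the case $J(\bbeta)=J(\bal)=n$ both points lie in the same piece $\Delta_n$ and only the continuity of the single map $K_n$ is required, while in the other case the first itinerary entry already settles the inequality.
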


\begin{proof}
We need to show that for all~$R\in\N$ and all $\bal\in\Delta$,
there is an~$\epsilon>0$ such that if $d_\infty(\bal,\bbeta)<\epsilon$
then $\Phi(\bbeta)^{(R)} \ge \Phi(\bal)^{(R)}$. The proof is by
induction on~$R$.

 For the case~$R=0$, observe that for all~$\bal\in\Delta$ there is
 some~$\epsilon>0$ such that if $d_\infty(\bal,\bbeta)<\epsilon$ then
 $\bbeta\in \Delta_{J(\bal)}\cup\Delta_{J(\bal)-1}$, so that
 $J(\bbeta)\le J(\bal)$ and hence $\Phi(\bbeta)^{(0)} \ge
 \Phi(\bal)^{(0)}$ as required.

If~$R>0$, then for each~$\bal\in\Delta$ there is, by the inductive
hypothesis, some~$\delta>0$ such that if
$d_\infty(K(\bal),\bgamma)<\delta$ then $\Phi(\bgamma)^{(R-1)} \ge
\Phi(K(\bal))^{(R-1)}$. Then, by continuity of $K_{J(\bal)}$, there is
some \mbox{$\epsilon>0$} such that if $d_\infty(\bal,\bbeta)<\epsilon$
then either $J(\bbeta)=J(\bal)$ and $d_\infty(K(\bal),
K(\bbeta))<\delta$; or \mbox{$J(\bbeta)<J(\bal)$}. In either case,
$\Phi(\bbeta)^{(R)} \ge \Phi(\bal)^{(R)}$ as required.
\end{proof}

Combining Lemma~\ref{lem:props_w} and Lemma~\ref{lem:LSC} gives

\begin{corollary}
\label{cor:comp-LSC}
$\Wmap\circ\Phi\colon\Delta\to \cM$ is lower semi-continuous.
\end{corollary}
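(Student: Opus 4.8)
The plan is to deduce the lower semi-continuity of $\Wmap\circ\Phi$ at an arbitrary $\bal\in\Delta$ from the two lemmas just proved, exploiting the fact that $\Wmap$ is \emph{both} continuous \emph{and} order-preserving. Write $\bn=\Phi(\bal)$ and $w=\Wmap(\bn)$, and fix $R\in\N$; I must produce $\epsilon>0$ such that $d_\infty(\bal,\bbeta)<\epsilon$ implies $\Wmap(\Phi(\bbeta))^{(R)}\ge w^{(R)}$.

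First I would use continuity of $\Wmap$ at $\bn$ (Lemma~\ref{lem:props_w}) to choose $r\in\N$ so that $\Wmap$ maps the basic open set $\{\bm\in\N^\N : \bm^{(r)}=\bn^{(r)}\}$ into the basic open set $\{u\in\Sigma : u^{(R)}=w^{(R)}\}$; that is, agreement of an itinerary with $\bn$ on its first $r$ entries forces agreement of its $\Wmap$-image with $w$ on its first $R$ letters.

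The key claim is then: for any $\bm\in\N^\N$ with $\bm^{(r)}\ge\bn^{(r)}$ in the reverse lexicographic order, one has $\Wmap(\bm)^{(R)}\ge w^{(R)}$. There are two cases. If $\bm^{(r)}=\bn^{(r)}$ this is immediate from the choice of $r$. If $\bm^{(r)}\ne\bn^{(r)}$, let $s<r$ be the first index at which they differ; then, reverse lexicographically, $m_s<n_s$, and since $s$ is already the first place at which $\bm$ and $\bn$ differ \emph{as full sequences}, we get $\bm>\bn$ in $\N^\N$. As $\Wmap$ is strictly order-preserving (Lemma~\ref{lem:props_w}), $\Wmap(\bm)>w$ in $\Sigma$; and because a lexicographic comparison between two sequences persists after truncating both to any common length, $\Wmap(\bm)^{(R)}\ge w^{(R)}$, proving the claim.

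Finally I would apply Lemma~\ref{lem:LSC} to this particular $r$: there is $\epsilon>0$ with $d_\infty(\bal,\bbeta)<\epsilon$ implying $\Phi(\bbeta)^{(r)}\ge\Phi(\bal)^{(r)}=\bn^{(r)}$. For such $\bbeta$, the key claim applied with $\bm=\Phi(\bbeta)$ yields $\Wmap(\Phi(\bbeta))^{(R)}\ge w^{(R)}$, as required; that $\Wmap\circ\Phi$ has image in $\cM$ is part of Lemma~\ref{lem:props_w}. The only point needing any care is the bookkeeping with the ``$0>1>2>\cdots$'' convention on $\N^\N$ against the usual lexicographic convention on $\Sigma$, together with the elementary observation that lexicographic comparisons survive truncation to a common length; there is no genuine obstacle, since the simultaneous continuity and monotonicity of $\Wmap$ is exactly what makes the composition with a merely lower semi-continuous map lower semi-continuous.
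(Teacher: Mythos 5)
Your proof is correct and is exactly the deduction the paper intends: it states the corollary as an immediate consequence of combining Lemma~\ref{lem:props_w} and Lemma~\ref{lem:LSC}, leaving to the reader precisely the argument you spell out (that composing a lower semi-continuous map with a continuous, order-preserving map is lower semi-continuous). Your handling of the two cases, the reverse-lexicographic convention, and the survival of comparisons under truncation is all sound.
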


The next lemma and remark describe the case in which one of the components
of~$\bal$ is zero, so that the problem can be reduced to one over a
smaller alphabet.

\begin{lemma}
\label{lem:zero-component}
Let~$\bal\in\Delta$ have itinerary $\bn = \Phi(\bal)$, and let $1\le
i\le k-1$. Then $\alpha_i = 0$ if and only if $n_r = 0$ for all $r
\equiv i-1 \bmod k-1$.
\end{lemma}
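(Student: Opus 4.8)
The plan is to follow, through the iterates $K^r(\bal)$, the coordinate that descends from the $i$-th coordinate of~$\bal$. Reading off~(\ref{eq:Kn}), the map $K_n$ sends the coordinate in position~$j$ to position~$j-1$ when $2\le j\le k-1$, sends the coordinate in position~$1$ to position~$k-1$ (as $\frac{\alpha_1-n\alpha_k}{1-\alpha_1}$), and combines positions~$1$ and~$k$ into position~$k$. So I would set $p(0)=i$ and $p(r+1)=p(r)-1$ if $p(r)\ge 2$, $p(r+1)=k-1$ if $p(r)=1$; then $r\mapsto p(r)$ is periodic of period $k-1$, running through $i,i-1,\ldots,1,k-1,k-2,\ldots$, and $p(r)=1$ exactly when $r\equiv i-1\pmod{k-1}$. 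The whole argument then reduces to understanding the scalar sequence $c_r:=K^r(\bal)_{p(r)}$, keeping in mind throughout that $K^r(\bal)\in\Delta$ forces $0\le K^r(\bal)_1<1$ (the $k$-th coordinate being positive) and $c_r\le 1$.

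For the forward implication I would show by induction on~$r$ that if $\alpha_i=0$ then $c_r=0$ for all~$r$. When $p(r)\ge 2$ this is immediate from $K^{r+1}(\bal)_{p(r)-1}=\frac{K^r(\bal)_{p(r)}}{1-K^r(\bal)_1}$. When $p(r)=1$ we have $K^r(\bal)_1=c_r=0$, hence $n_r=J(K^r(\bal))=\lfloor 0/K^r(\bal)_k\rfloor=0$, and then $K^{r+1}(\bal)_{k-1}=\frac{K^r(\bal)_1-n_rK^r(\bal)_k}{1-K^r(\bal)_1}=0$, so $c_{r+1}=0$ again. Evaluating this at the steps $r\equiv i-1\pmod{k-1}$ (where $p(r)=1$) gives $K^r(\bal)_1=0$, and hence $n_r=0$, as required.

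For the converse I would argue the contrapositive: assuming $\alpha_i>0$, derive a contradiction from $n_r=0$ for all $r\equiv i-1\pmod{k-1}$. The same two formulas show $c_r$ is non-decreasing: for $p(r)\ge 2$, $c_{r+1}=\frac{c_r}{1-K^r(\bal)_1}\ge c_r$; and for $p(r)=1$, the hypothesis gives $n_r=0$, so $c_{r+1}=\frac{c_r}{1-c_r}\ge c_r$ (using $c_r=K^r(\bal)_1<1$). Hence $c_r\ge c_0=\alpha_i>0$ for all~$r$. But at each of the infinitely many steps $r\equiv i-1\pmod{k-1}$ one gets the strict gain $c_{r+1}=\frac{c_r}{1-c_r}\ge\frac{c_r}{1-\alpha_i}$, so along the subsequence $r_m=(i-1)+m(k-1)$ we have $c_{r_m}\ge\alpha_i\,(1-\alpha_i)^{-m}\to\infty$ (note $\alpha_i<1$, since $i\ne k$ and $\alpha_k>0$), contradicting $c_{r_m}\le 1$. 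Therefore $\alpha_i=0$.

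The routine part is the explicit coordinate bookkeeping for~$K_n$; the one genuinely substantive point occurs in the converse, where knowing only that the tracked coordinate stays positive is not enough — one needs the \emph{quantitative} multiplicative growth by a factor bounded away from~$1$ to contradict $c_r\le 1$. A minor subtlety to bear in mind (but which causes no difficulty, since we only ever follow the single coordinate descending from~$\alpha_i$) is that $K(\bal)_{k-1}$ can vanish even when $\alpha_1>0$, namely when $\alpha_1/\alpha_k$ is a positive integer.
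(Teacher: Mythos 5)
Your proof is correct and takes essentially the same route as the paper's: both directions track the relevant coordinate around the period-$(k-1)$ cycle of positions, and the converse rests on the same multiplicative gain of $1/(1-\alpha_i)$ per period contradicting boundedness of coordinates in $\Delta$. The only difference is organisational --- the paper reduces to the case $i=1$ and computes the $(k-1)$-fold composition exactly (getting $K^{k-1}(\bal)_1=\alpha_1/\alpha_k$ when $n_0=0$), whereas you treat general $i$ directly via a monotonicity inequality; both yield the estimate $c\ge\alpha_i(1-\alpha_i)^{-s}$ along the subsequence.
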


\begin{proof}
By~(\ref{eq:Kn}), if $\alpha_1=0$ then
$K(\bal)_{k-1}=K_0(\bal)_{k-1}=0$, while if $\alpha_i=0$ for some~$i$
with $2\le i\le k-1$ then $K(\bal)_{i-1} = 0$. Since $\Phi(\bal)_0 =
0$ whenever $\alpha_1 = 0$, it follows immediately that if
$\alpha_i=0$ then $n_r=0$ for all $r \equiv i-1 \bmod k-1$.

\medskip

For the converse observe first, by a straightforward induction
on~$i$, that if $0\le i \le k-2$ and if $n_0=0$, then
\[
K_{n_i}\circ K_{n_{i-1}} \circ \cdots \circ K_{n_0}(\bal)_j = 
\begin{cases}
{\alpha_{j+i+1}}\big/\left(1-\sum_{\ell=1}^{i+1}\alpha_\ell\right) & \text{ if
}1\le j \le k-i-2,\\
{\alpha_1}\big/\left(1-\sum_{\ell=1}^{i+1}\alpha_\ell\right) & \text{ if
} j = k-i-1,
\end{cases}
\]
independently of $n_1,\ldots,n_i$. The case $i=k-2$ gives
\[
K_{n_{k-2}}\circ K_{n_{k-1}} \circ \cdots \circ K_{n_0}(\bal)_1 =
\frac{\alpha_1}{\alpha_k} \ge \frac{\alpha_1}{1-\alpha_1},
\]
provided only that $n_0 = 0$. Now if $n_r = 0$ for all~$r\equiv 0
\bmod k-1$ then repeated application of this inequality gives
$K^{s(k-1)}(\bal)_1 \ge \alpha_1/(1-\alpha_1)^s$ for all~$s\ge 0$, and
since $K^{s(k-1)}(\bal)_1 < 1$ for all~$s$ it follows that
$\alpha_1=0$, establishing the converse in the case~$i=1$.

\medskip

The statement for arbitrary~$i\le k-1$ follows. For if $n_r = 0$ for
all~$r\equiv i-1 \bmod k-1$, then $\bbeta = K^{i-1}(\bal)$ has
itinerary $\bm = \Phi(\bbeta)$ satisfying $m_r=0$ for all $r\equiv 0
\bmod k-1$. Therefore $\beta_1=0$, and hence $\alpha_i=0$ by $i-1$
applications of~(\ref{eq:KnI}).
\end{proof}

\begin{corollary}
Let $\bal\in\Delta$ and $\bn=\Phi(\bal)$. Then the following are
equivalent:
\begin{enumerate}[a)]
\item For all $r\ge 0$, no component of $K^r(\bal)$ is zero; and
\item For all $r\ge 0$, there is some $s\ge 0$ with
  $n_{r+s(k-1)}\not=0$. 
\end{enumerate}
\end{corollary}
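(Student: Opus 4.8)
The plan is to deduce this corollary directly from Lemma~\ref{lem:zero-component}, applied to each orbit point $K^r(\bal)$ in turn rather than to $\bal$ alone. The first step is to record that the itinerary map is compatible with the shift: from $\Phi(\bbeta)_j = J(K^j(\bbeta))$ one gets $\Phi(K^r(\bal))_j = J(K^{r+j}(\bal)) = n_{r+j}$, so the itinerary of $K^r(\bal)$ is the tail $(n_{r+j})_{j\ge 0}$ of $\bn$. I would also note that every point of $\Delta$, and hence every $K^r(\bal)$, has positive $k$-th coordinate, so the assertion that no component of $K^r(\bal)$ is zero means exactly that $K^r(\bal)_i\neq 0$ for all $i$ with $1\le i\le k-1$; the last coordinate plays no role anywhere in the argument.

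Next I would apply Lemma~\ref{lem:zero-component} with $\bbeta=K^r(\bal)$: for $1\le i\le k-1$, $K^r(\bal)_i=0$ if and only if $n_{r+j}=0$ for every $j\equiv i-1\pmod{k-1}$, that is, if and only if $n_t=0$ for every $t\ge r$ with $t\equiv r+i-1\pmod{k-1}$. As $i$ runs over $\{1,\dots,k-1\}$ the residue $r+i-1\pmod{k-1}$ runs over every residue class, so some component of $K^r(\bal)$ vanishes if and only if there is a residue class $c$ modulo $k-1$ with $n_t=0$ for all $t\ge r$ satisfying $t\equiv c\pmod{k-1}$. Consequently condition a) is precisely the statement that for every $r\ge 0$ and every residue class $c$ modulo $k-1$ there exists $t\ge r$ with $t\equiv c\pmod{k-1}$ and $n_t\neq 0$.

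It then remains to check that this reformulation of a) is equivalent to b), which is an elementary exercise about residue classes. For a) $\Rightarrow$ b): given $r$, apply a) with this $r$ and with $c\equiv r\pmod{k-1}$, and write the index $t$ so produced as $r+s(k-1)$ with $s\ge 0$. For b) $\Rightarrow$ a): given $r$ and a residue class $c$, let $r'$ be the least integer $\ge r$ with $r'\equiv c\pmod{k-1}$, apply b) at $r'$ to obtain $s\ge0$ with $n_{r'+s(k-1)}\neq0$, and observe that $t:=r'+s(k-1)$ satisfies $t\ge r$ and $t\equiv c\pmod{k-1}$ with $n_t\neq0$.

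I do not expect any genuine obstacle: the whole argument is bookkeeping built on Lemma~\ref{lem:zero-component}. The only points that need care are the shift-invariance of $\Phi$, the observation that $i$ ranging over $1,\dots,k-1$ sweeps out all residue classes modulo $k-1$, and the fact that the $k$-th coordinate of a point of $\Delta$ is always positive and so may be ignored throughout.
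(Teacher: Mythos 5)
Your argument is correct and is exactly the deduction the paper intends: the corollary is stated without proof as an immediate consequence of Lemma~\ref{lem:zero-component}, applied to each $K^r(\bal)$ via the shift-compatibility $\Phi(K^r(\bal))_j=n_{r+j}$, followed by the residue-class bookkeeping you carry out. No gaps; the observations about $\alpha_k>0$ on $\Delta$ and about $i-1$ sweeping all residues mod $k-1$ are precisely the points that need checking.
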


\begin{remark}
\label{rmk:reduce-k}
This remark relates the $K$-orbit of~$\bal$ when some~$\alpha_i=0$ to
the $K$-orbit of the point~$\bbeta$ obtained by deleting the
$i^\text{th}$ component of $\bal$. We therefore include the value
of~$k$ in our notation, writing $\Delta^k$ instead of~$\Delta$. For
each $i$ with $1\le i\le k-1$, write \mbox{$\Delta^{k,i} =
  \{\bal\in\Delta^k\,:\, \alpha_i=0\}$}, the $i^\text{th}$ face
of~$\Delta$, and let $\pi_i\colon\Delta^{k,i}\to \Delta^{k-1}$ be the
bijection which forgets~$\alpha_i$: that is, $\pi_i(\bal) =
(\alpha_1,\ldots,\alpha_{i-1}, \alpha_{i+1}, \ldots, \alpha_k)$. The
bijection~$\pi_i$ clearly also depends on~$k$, as do the maps~$K$, but
no confusion will arise from continuing to suppress this dependence.

Now if $k\ge 3$ and $\bal\in\Delta^{k,i}$, then it follows directly 
from~(\ref{eq:Kn}) that
\[
K(\bal) = 
\begin{cases}
\pi_{k-1}^{-1}\circ\pi_1(\bal) & \text{ if }i = 1,\\
\pi_{i-1}^{-1}\circ K\circ \pi_i(\bal) & \text{ if }2\le i\le k-1.
\end{cases}
\]
 In particular, the itinerary $\Phi(\pi_i(\bal))$ is obtained from
 $\Phi(\bal)$ by deleting the zeroes which occur at each position~$r
 \equiv i-1\bmod k-1$.
\end{remark}

As stated in Section~\ref{sec:defns-statments}, if $\bn\in\N^\N$ it is
not in general the case that there is only a single point of~$\Delta$
with itinerary $\bn$. However, it is a straightforward consequence of
Lemma~\ref{lem:zero-component} that $\Phi^{-1}(\bn)$ is a single point
for itineraries of the form $\bn=W\,\overline{0}$.

\begin{lemma}
\label{lem:itinerary0bar}
Let $\bn = n_0n_1\ldots n_{r-1}\overline{0}\in\N^\N$. Then there is a
unique $\bal\in\Delta$ with $\Phi(\bal) = \bn$, namely 
$
\bal = K_{n_0}^{-1}\circ K_{n_1}^{-1} \circ \cdots \circ K_{n_{r-1}}^{-1} (0,0,\ldots,0,1)
$.
\end{lemma}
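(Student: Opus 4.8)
The plan is to prove existence and uniqueness separately, reducing both to the single observation that $\mathbf{e}_k := (0,0,\ldots,0,1)$ is the unique element of~$\Delta$ whose itinerary is~$\overline{0}$; the point~$\bal$ in the statement will then be obtained from~$\mathbf{e}_k$ by applying the maps~$K_{n_j}^{-1}$ in turn, and conversely any~$\bbeta$ with $\Phi(\bbeta)=\bn$ will be forced to equal~$\bal$ by applying~$K$ to it $r$ times.

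So the first step is to pin down the itinerary of~$\mathbf{e}_k$. Substituting $\bal=\mathbf{e}_k$ and $n=0$ into~(\ref{eq:Kn}) gives $K_0(\mathbf{e}_k)=\mathbf{e}_k$ immediately, and since $J(\mathbf{e}_k)=\lfloor 0\rfloor = 0$ this says $K(\mathbf{e}_k)=\mathbf{e}_k$, so that $\Phi(\mathbf{e}_k)=\overline{0}$. For the converse I would invoke Lemma~\ref{lem:zero-component}: if $\bgamma\in\Delta$ has $\Phi(\bgamma)=\overline{0}$ then in particular $n_s = 0$ for all $s\equiv i-1\bmod k-1$, for each~$i$ with $1\le i\le k-1$, so $\gamma_i=0$ for $1\le i\le k-1$ and hence $\bgamma=\mathbf{e}_k$. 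This also settles the degenerate case $r=0$ of the lemma.

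For existence in general, I would set $\bal^{(j)} = K_{n_j}^{-1}\circ K_{n_{j+1}}^{-1}\circ\cdots\circ K_{n_{r-1}}^{-1}(\mathbf{e}_k)$ for $0\le j\le r$, so that $\bal^{(r)}=\mathbf{e}_k$ and $\bal^{(0)}$ is the point~$\bal$ in the statement. The key point is that $K_n^{-1}$ takes values in~$\Delta_n$, on which $K$ coincides with~$K_n$; hence $J(\bal^{(j)})=n_j$ and $K(\bal^{(j)})=K_{n_j}(\bal^{(j)})=\bal^{(j+1)}$, so by induction $K^j(\bal)=\bal^{(j)}$ for $0\le j\le r$. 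Reading off the itinerary gives $\Phi(\bal)_j=n_j$ for $j<r$, while for $j\ge r$ one has $K^j(\bal)=K^{j-r}(\mathbf{e}_k)=\mathbf{e}_k$, so $\Phi(\bal)_j=0$; thus $\Phi(\bal)=\bn$. For uniqueness, given $\bbeta\in\Delta$ with $\Phi(\bbeta)=\bn$, I would note that $\Phi(K^r(\bbeta))=\overline{0}$, so $K^r(\bbeta)=\mathbf{e}_k$ by the first step, and that for each $j<r$ the equality $J(K^j(\bbeta))=n_j$ places $K^j(\bbeta)$ in~$\Delta_{n_j}$, whence $K^j(\bbeta)=K_{n_j}^{-1}\bigl(K_{n_j}(K^j(\bbeta))\bigr)=K_{n_j}^{-1}(K^{j+1}(\bbeta))$. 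Composing these $r$ identities yields $\bbeta=K_{n_0}^{-1}\circ\cdots\circ K_{n_{r-1}}^{-1}(\mathbf{e}_k)=\bal$.

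I do not expect a serious obstacle: the whole argument is a short induction once the behaviour at~$\overline{0}$ is understood. The only thing needing care is the bookkeeping of which branch of the piecewise map~$K$ is active at each stage — that is, tracking membership in the sets~$\Delta_n$ so that $K$ genuinely composes with $K_n^{-1}$ to the identity — together with the fact, already built into the definitions in Section~\ref{sec:defns-statments}, that each $K_n^{-1}$ maps $\Delta$ into $\Delta$ (equivalently, that the last coordinate stays positive), so that all the intermediate points $\bal^{(j)}$ and $K^j(\bbeta)$ are legitimate elements of the domain.
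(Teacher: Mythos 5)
Your argument is correct and is essentially the paper's own proof written out in full: the paper simply asserts the equivalence $\Phi(\bal)=\bn \iff \bal = K_{n_0}^{-1}\circ\cdots\circ K_{n_{r-1}}^{-1}(\bbeta)$ with $\Phi(\bbeta)=\overline{0}$ (which is exactly your existence and uniqueness inductions) and then quotes Lemma~\ref{lem:zero-component} to identify $\bbeta=(0,\ldots,0,1)$, just as you do. Your extra care with the branch bookkeeping ($K_n^{-1}(\Delta)=\Delta_n$, so $J$ reads off $n_j$ at each stage) is exactly the content the paper leaves implicit.
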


\begin{proof}
$\Phi(\bal)=\bn$ if and only if $\bal = K_{n_0}^{-1}\circ K_{n_1}^{-1}
  \circ \cdots \circ K_{n_{r-1}}^{-1}(\bbeta)$ for some $\bbeta$ with
  itinerary~$\overline{0}$. But $\Phi(\bbeta) = \overline{0}$ if and
  only if $\bbeta = (0,0,\ldots,0,1)$ by
  Lemma~\ref{lem:zero-component}.
\end{proof}

In particular, if $\bal$ has an itinerary of this form then
$\bal\in\Q^k$. Theorem~\ref{thm:restatement} states that, conversely,
every element~$\bal$ of $\Delta\cap\Q^k$ has such an itinerary: that
is, that $K^r(\bal) = (0,0,\ldots,0,1)$ for some $r$.

\section{The finite version}
\label{sec:finite}
In this section we solve a finite version of the minimax problem,
which is a necessary precursor to our later results. The simplicity of
the solution makes it straightforward to understand the origin of the
maps~$K_n$ and the substitutions~$\Lambda_n$.

A word $W\in\cA^*$ is said to be maximal if $\overline{W}$ is a
maximal element of $\Sigma$ or, equivalently, if $W=UV \implies W\ge
VU$, i.e. $W$ is at least as large as all of its cyclic permutations.

Let $\DelF = \{\balF =
(\alphaF_1,\ldots,\alphaF_k)\in\N^k\,:\,\alphaF_k>0\}$, the discrete
analogue of the space~$\Delta$. Continuing the analogy, we write for
each $\balF\in\DelF$
\begin{eqnarray*}
\RF(\balF) &=& \{W\in\cA^*\,:\,|W|_i = \alphaF_i \text{ for }1\le i\le
k\}\qquad\text{(a finite set)},\\
\MF(\balF) &=& \{W\in \RF(\balF)\,:\, W \text{ is
  maximal}\},\qquad\text{and}\\
\IF(\balF) &=& \min \MF(\balF).
\end{eqnarray*}

\begin{remark}
\label{rmk:obvious}
An obvious comment, which is nevertheless important for the proof of
Theorem~\ref{thm:finite} below, is that every $W\in \RF(\balF)$ has a
cyclic permutation which belongs to $\MF(\balF)$.
\end{remark}

For each $n\in\N$ write $\DelF_n = \{\balF\in\DelF\,:\,n\,\alphaF_k\le
\alphaF_1<(n+1)\,\alphaF_k\}$, and define a bijection
$\KF_n\colon\DelF_n\to\DelF$ by

\[
\KF_n(\balF) = (\alphaF_2,\alphaF_3, \ldots,
\alphaF_{k-1}, \alphaF_1-n\alphaF_k, (n+1)\alphaF_k-\alphaF_1),
\]
whose inverse $\KF_n^{-1}\colon \DelF\to\DelF_n$, the Abelianization
of the substitution~$\Lambda_n$, is given by
\[
\KF_n^{-1}(\balF) = ((n+1)\alphaF_{k-1}+n\alphaF_k,
\alphaF_1,\alphaF_2, \ldots, \alphaF_{k-2}, \alphaF_{k-1}+\alphaF_k).
\]

\begin{lemma}
\label{lem:finiteLambda}
Let $\balF\in\DelF$. Then the set $\Lambda_n^{-1}(\MF(\balF))$ of words
whose image under~$\Lambda_n$ lies in $\MF(\balF)$ is exactly
$\MF(\KF_n(\balF))$.
\end{lemma}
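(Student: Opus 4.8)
I would prove the equality $\Lambda_n^{-1}(\MF(\balF)) = \MF(\KF_n(\balF))$ by double inclusion, treating the two inclusions with slightly different tools. The natural bookkeeping device is that $\Lambda_n$ multiplies letter-counts according to its Abelianization: since $\KF_n^{-1}$ is exactly that Abelianization (as the text notes just before the lemma), we have, for any word $W$, that $\Lambda_n(W)\in\RF(\balF)$ if and only if $W\in\RF(\KF_n(\balF))$. Indeed $\Lambda_n$ sends a word with counts $\balF' = \KF_n(\balF)$ to a word with counts $\KF_n^{-1}(\balF') = \KF_n^{-1}(\KF_n(\balF)) = \balF$; one should note in passing that every word in $\RF(\balF)$ that can possibly arise as $\Lambda_n(W)$ does so for $W\in\RF(\KF_n(\balF))$, and moreover that $\Lambda_n$ is injective on $\cA^*$, which follows because the letter~$k$ marks the start of every $\Lambda_n$-block, so a word in the image can be uniquely de-substituted. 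So the count condition matches up automatically on both sides, and the real content of the lemma is that maximality is preserved and reflected.

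For the inclusion $\MF(\KF_n(\balF)) \subseteq \Lambda_n^{-1}(\MF(\balF))$: given $W\in\MF(\KF_n(\balF))$, I must show $\Lambda_n(W)$ is a maximal word, i.e. $\overline{\Lambda_n(W)}$ is a maximal element of~$\Sigma$. Since $W$ maximal means $\overline{W}\in\cM$, and $\Lambda_n(\overline{W}) = \overline{\Lambda_n(W)}$ (as $\Lambda_n$ commutes with infinite concatenation when $W$ is the repeating block — one should check the block lengths line up, which they do since $\Lambda_n(W)$ is literally the period), Lemma~\ref{lem:actionLambda}'s assertion that $\Lambda_n(\cM)\subseteq\cM$ gives $\overline{\Lambda_n(W)}\in\cM$ directly. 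Combined with the count observation above ($\Lambda_n(W)\in\RF(\balF)$), this yields $\Lambda_n(W)\in\MF(\balF)$. This direction is essentially immediate.

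The harder inclusion is $\Lambda_n^{-1}(\MF(\balF)) \subseteq \MF(\KF_n(\balF))$: given that $\Lambda_n(W)$ is maximal and lies in $\RF(\balF)$, I must deduce $W$ is maximal and lies in $\RF(\KF_n(\balF))$. The count condition is handled as before. For maximality, suppose $W$ is \emph{not} maximal; then some cyclic permutation $W' = VU$ of $W = UV$ is strictly larger, and by Remark~\ref{rmk:obvious} I may take $W'\in\MF(\KF_n(\balF))$, whence $W' > W$. By the first inclusion just proved, $\Lambda_n(W')\in\MF(\balF)$, so $\Lambda_n(W')$ is maximal with the same letter-counts as $\Lambda_n(W)$. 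The key point is that $\Lambda_n(W') = \Lambda_n(VU) = \Lambda_n(V)\Lambda_n(U)$ is a \emph{cyclic permutation} of $\Lambda_n(W) = \Lambda_n(U)\Lambda_n(V)$, since $\Lambda_n$ is a substitution and respects concatenation. But $\Lambda_n(W')$ maximal means it is at least as large as all its cyclic permutations, in particular $\Lambda_n(W') \ge \Lambda_n(W)$. On the other hand, strict order-preservation of $\Lambda_n$ on $\cA^*$ (Lemma~\ref{lem:actionLambda}) applied to $W' > W$ — using the paper's convention that shorter initial subwords count as larger, so that the ordering is total and $\Lambda_n$ respects it — gives $\Lambda_n(W') < \Lambda_n(W)$, since $|W'| = |W|$ rules out the initial-subword case and forces a genuine lexicographic comparison. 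Wait — I should be careful about the direction of strict order-preservation: Lemma~\ref{lem:actionLambda} says $V < W \implies \Lambda_n(V) < \Lambda_n(W)$, so $W' > W$ gives $\Lambda_n(W') > \Lambda_n(W)$, which is consistent rather than contradictory. Let me instead run the contradiction the other way: if $W$ is not maximal, it is strictly \emph{smaller} than its maximal cyclic permutation $W'$, so $\overline{W} < \overline{W'} = \Lambda_n^{-1}(\ldots)$; applying $\Lambda_n$ to $\overline{W} \le \sigma^r(\overline{W})$ for a suitable $r$ realizing the maximality failure, and using that $\Lambda_n$ of a shift is a shift of $\Lambda_n$ (by an amount equal to the length of the $\Lambda_n$-image of the dropped prefix), one gets $\overline{\Lambda_n(W)} < \sigma^{r'}(\overline{\Lambda_n(W)})$ for the corresponding $r'$, contradicting maximality of $\Lambda_n(W)$.

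**The main obstacle.** The delicate step is this last one: showing that a maximality \emph{failure} for $W$ forces one for $\Lambda_n(W)$. Positively-stated order-preservation ($\Lambda_n$ order-preserving) transports "$\overline{W}$ maximal" \emph{to} "$\overline{\Lambda_n(W)}$ maximal" but does not by itself transport the negation backwards — one needs the finer fact that $\Lambda_n$ is a bijection from $\cA^*$ onto $\Lambda_n(\cA^*)$ with the property that $\Lambda_n$-images of \emph{cyclic} permutations are cyclic permutations of the $\Lambda_n$-image, together with strict order-preservation. The cleanest route, which I would follow in the writeup, is: (i) note $\MF(\KF_n(\balF))\neq\emptyset$ (Remark~\ref{rmk:obvious}) and that $\Lambda_n$ maps it into $\MF(\balF)$ (first inclusion); (ii) given $V\in\Lambda_n^{-1}(\MF(\balF))$, let $W_0 = \IF(\KF_n(\balF))$ be the minimal maximal word with those counts, so $\Lambda_n(W_0)\in\MF(\balF)$; (iii) both $\Lambda_n(V)$ and $\Lambda_n(W_0)$ lie in $\MF(\balF)$, hence $\Lambda_n(W_0) = \IF(\balF) \le \Lambda_n(V)$; and (iv) argue that $V$ itself must be a maximal word by comparing with $W_0$ via strict order-preservation and Remark~\ref{rmk:obvious} applied to $V$. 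This reduces the cyclic-permutation gymnastics to one clean use of strict monotonicity, which is the point to get right.
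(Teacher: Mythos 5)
Your count bookkeeping and the easy inclusion $\MF(\KF_n(\balF))\subseteq\Lambda_n^{-1}(\MF(\balF))$ match the paper exactly. For the reverse inclusion you actually had the paper's one-line argument in hand and then talked yourself out of it: if $W=UV<VU$, strict order-preservation of $\Lambda_n$ on $\cA^*$ gives $\Lambda_n(U)\Lambda_n(V)<\Lambda_n(V)\Lambda_n(U)$, and the right-hand side is a cyclic permutation of $\Lambda_n(W)=\Lambda_n(U)\Lambda_n(V)$; this contradicts the \emph{hypothesis} that $\Lambda_n(W)$ is maximal, i.e.\ at least as large as all of \emph{its own} cyclic permutations. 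Your ``consistent rather than contradictory'' worry came from invoking maximality of $\Lambda_n(W')$ (which indeed only reproduces $\Lambda_n(W')\ge\Lambda_n(W)$) instead of maximality of $\Lambda_n(W)$. Your fallback argument via $\overline{W}$, shifts, and $\Lambda_n\colon\Sigma\to\Sigma$ is correct and is the same argument transported to infinite sequences.

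The route (i)--(iv) that you say you would actually write up is the one that does not work. Step (iii) asserts $\Lambda_n(\IF(\KF_n(\balF)))=\IF(\balF)$; that is precisely Theorem~\ref{thm:finite}(b), which the paper proves \emph{from} this lemma, so invoking it here is circular (from $\Lambda_n(W_0)\in\MF(\balF)$ you only get $\Lambda_n(W_0)\ge\IF(\balF)$). Step (iv) is not an argument: no comparison of $V$ with $W_0$ is specified that would force $V$ to be maximal, and in fact $W_0$ plays no role in any correct proof --- the contradiction comes entirely from comparing $\Lambda_n(V)$ with the $\Lambda_n$-image of a cyclic permutation of $V$. Drop the $\IF$ detour and keep either your first argument (corrected as above, which is the paper's proof) or your shift-based one.
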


\begin{proof}
To show that $\Lambda_n^{-1}(\MF(\balF)) \subset \MF(\KF_n(\balF))$,
let $W\in\cA^*$ with $\Lambda_n(W)\in \MF(\balF)$. Then $W\in
\RF(\KF_n(\balF))$ by comparison of the right-hand side
of~(\ref{eq:lambda_n}) with the formula for~$\KF_n^{-1}$. Moreover,
$W$ is maximal: for if $W=UV < VU$ then $\Lambda_n(W) =
\Lambda_n(U)\Lambda_n(V) < \Lambda_n(V)\Lambda_n(U)$ by
Lemma~\ref{lem:actionLambda}, contradicting the maximality of
$\Lambda_n(W)$.

To show that $\Lambda_n(W)\in \MF(\balF)$ for all $W\in
\MF(\KF_n(\balF))$, it follows as above that $\Lambda_n(W) \in
\RF(\balF)$. That it is maximal follows from translating the statement
$\Lambda_n(\cM)\subseteq\cM$ (Lemma~\ref{lem:actionLambda}) into the
finite setting:
\[W\text{ maximal } \implies \overline{W}\in \cM \implies
\Lambda_n(\overline{W}) = \overline{\Lambda_n(W)} \in \cM \implies
\Lambda_n(W)\text{ maximal.}\]
\end{proof}

The following theorem gives the fundamental relationship between the
substitutions, the linear maps associated to the division-remainder
algorithm, and the minimax: the substitution $\Lambda_n$ sends the
minimax for $\balF$ to the minimax for $\KF_n^{-1}(\balF)$.

\newpage

\begin{theorem}
\label{thm:finite}
Let $\balF\in\DelF$. 
\begin{enumerate}[(a)]
\item If $\alphaF_i=0$ for all $i<k$ then $\IF(\balF)=k^{\alphaF_k}$.
\item Otherwise, $\IF(\balF) = \Lambda_n(\IF(\KF_n(\balF)))$, where
  $n=\lfloor \alphaF_1/\alphaF_k\rfloor$. 
\end{enumerate}
\end{theorem}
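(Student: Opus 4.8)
The plan is to prove part~(a) directly and then attack part~(b) by showing that $\Lambda_n$ sets up an order-isomorphism between $\MF(\KF_n(\balF))$ and a cofinal (downward) subset of $\MF(\balF)$, so that minima correspond. Part~(a) is immediate: if $\alphaF_i = 0$ for all $i < k$ then $\RF(\balF) = \{k^{\alphaF_k}\}$ is a singleton, and $k^{\alphaF_k}$ is maximal since $\overline{k^{\alphaF_k}} = \overline{k}\in\cM$, so $\IF(\balF) = k^{\alphaF_k}$.

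For part~(b), the heart of the argument is Lemma~\ref{lem:finiteLambda}, which already tells me that $\Lambda_n$ maps $\MF(\KF_n(\balF))$ bijectively onto its image inside $\MF(\balF)$, and that this image is precisely $\Lambda_n(\cA^*)\cap\MF(\balF)$. Since $\Lambda_n\colon\cA^*\to\cA^*$ is strictly order-preserving (Lemma~\ref{lem:actionLambda}), the minimum of $\MF(\KF_n(\balF))$ is sent to the minimum of $\Lambda_n(\MF(\KF_n(\balF))) = \Lambda_n(\cA^*)\cap\MF(\balF)$. So the theorem reduces to the claim that the smallest element of $\MF(\balF)$ actually lies in the image of $\Lambda_n$; equivalently, that $\IF(\balF)$ is a $\Lambda_n$-image of some word. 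This is the one step that requires real work.

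To establish that claim I would argue as follows. Let $W = \IF(\balF)$. Because $\alphaF_1 \ge n\,\alphaF_k$ with $\alphaF_k > 0$ and $W$ is maximal, its first letter is $k$ (the largest letter present, since $\alphaF_k>0$), and in fact — by maximality — $W$ must begin with $k\,1^m$ for some $m\ge 0$; I would show $m \ge n$. The key quantitative point is a counting/averaging estimate: since $\brho(W)_1/\brho(W)_k = \alphaF_1/\alphaF_k \ge n$, the letter $1$ occurs at least $n$ times as often as the letter $k$, and a minimality argument (any "excess" occurrence of a letter $>1$ appearing too early could be pushed later to produce a smaller maximal word with the same letter counts) forces every occurrence of the letter $k$ in $W$ to be immediately followed by at least $n$ copies of the letter $1$, and every occurrence of the letter $k-1$ to be followed by at least $n+1$ copies of $1$. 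Granting this, one decodes $W$ uniquely as a concatenation of the blocks $k\,1^n$ and $k\,1^{n+1}$ together with the letters $2,\ldots,k-1$ each shifted down by one — precisely the image alphabet of $\Lambda_n$ as listed in~(\ref{eq:lambda_n}) — so $W = \Lambda_n(W')$ for a unique $W'\in\cA^*$, which then necessarily lies in $\MF(\KF_n(\balF))$ by Lemma~\ref{lem:finiteLambda}. Combined with the order-isomorphism above, this gives $\IF(\balF) = \Lambda_n(\IF(\KF_n(\balF)))$.

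The main obstacle is exactly this decoding step: proving that the minimax $W$ cannot have a "short" run of $1$'s after some $k$ (or $k-1$), i.e.\ that $W$ genuinely lies in $\Lambda_n(\cA^*)$. The natural tool is an exchange argument — given a maximal word in $\RF(\balF)$ with a too-short run, locate the offending position and rewrite the word (swapping a nearby letter, or cyclically reordering using Remark~\ref{rmk:obvious} to restore maximality) to obtain a strictly smaller element of $\MF(\balF)$, contradicting minimality. One has to handle carefully the interaction between the lexicographic comparison and the requirement that the result still be maximal, and the boundary case where runs of $1$'s wrap around the periodic word $\overline{W}$; Remark~\ref{rmk:obvious} (every word in $\RF(\balF)$ has a maximal cyclic permutation) is what makes these manipulations legitimate.
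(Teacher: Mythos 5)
Your overall reduction is exactly the paper's: part~(a) is immediate, and part~(b) follows from Lemmas~\ref{lem:actionLambda} and~\ref{lem:finiteLambda} once one knows that $\IF(\balF)$ lies in the image of $\Lambda_n$. The gap is in that last step, and it is a genuine one. First, your intermediate claim that every occurrence of the letter $k-1$ in $\IF(\balF)$ must be followed by at least $n+1$ copies of $1$ is false: it confuses the source letter $k-1$ (whose $\Lambda_n$-image is $k1^{n+1}$) with occurrences of the letter $k-1$ in the image word, where $k-1$ arises as $\Lambda_n(k-2)$ and need not be followed by any $1$ at all (e.g.\ $\IF(10,3,1)=31^{10}2^3$ with $k=3$, $n=10$). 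What membership in $\Lambda_n(\cA^*)$ actually requires is that every $1$ sit in a run immediately following a $k$ and that each such run have length exactly $n$ or $n+1$ --- that is, a lower bound \emph{and} an upper bound on the runs, together with the fact that $1$'s occur nowhere else. Your argument addresses only the lower bound.

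Second, the upper bound is where the real content lies, and your counting observation does not yield it: there are $\alphaF_1-n\alphaF_k<\alphaF_k$ excess $1$'s distributed over $\alphaF_k$ blocks, which shows that \emph{some} block carries no excess, but not that \emph{every} block carries at most one. The paper closes this by an explicit exchange: writing $\IF(\balF)=k1^nW_1\,k1^nW_2\cdots k1^nW_{\alphaF_k}$, it first notes that minimality forces the letters within each $W_r$ to appear in ascending order (this is what guarantees that all $1$'s cluster immediately after the $k$'s), then takes the least $s$ for which $W_s$ begins with two or more $1$'s, moves one of them to $W_{s-1}$, passes to the maximal cyclic permutation $W'$ of the resulting word, and verifies that the new starting block index $t$ is different from $s$, whence $W'$ is strictly smaller than $\IF(\balF)$ --- a contradiction. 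This is precisely the ``exchange argument'' you defer to, and since it is where all the difficulty lives (including the interaction between lexicographic comparison and re-establishing maximality that you yourself flag), the proposal as written does not prove the key claim.
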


\begin{remarks}\mbox{}
\label{rmk:finite}
\begin{enumerate}[a)]
\item The theorem gives rise to a straightforward algorithm for
  calculating $\IF(\balF)$: the key point is that the sum of the
  entries of $\KF_n(\balF)$ is $\alphaF_1$ less than the sum of the
  entries of $\balF$; and if $\alphaF_i>0$ for any $i<k$, then
  $\alphaF_1$ will be positive after applying~$i-1$ terms of the
  appropriate sequence of $\KF_n$'s. Therefore repeatedly applying
  $\KF_{\lfloor \alphaF_1/\alphaF_k\rfloor}$ eventually yields an
  $\balF$ with $\alphaF_i=0$ for all~$i<k$. The hand implementation of
  this algorithm is illustrated in Examples~\ref{example:finite}
  below, while the statement of Theorem~\ref{thm:finite} translates
  directly into a recursive algorithm for computer implementation.
\item By linearity of the $\KF_n$ we have $\IF(N\balF) = \IF(\balF)^N$
  for each integer $N\ge 1$.
\end{enumerate}
\end{remarks}

\begin{examples}
\label{example:finite}
Let $k=3$ and $\balF=(24,3,14)$. We have
\[(24,3,14)\topL1(3,10,4)\topL0(10,3,1)\topL{10}(3,0,1)\topL3(0,0,1),\]
so that
\[
\IF(24,3,14) = \Lambda_1\Lambda_0\Lambda_{10}\Lambda_3(3) 
= \Lambda_1\Lambda_0\Lambda_{10}(31^3)
= \Lambda_1\Lambda_0(31^{10}2^3)
= \Lambda_1(3\,2^{10}\,(31)^3)
= 31\,(311)^{10}\,(312)^3.
\]
Notice that the intermediate words $31^3$, $31^{10}2^3$, and
$3\,2^{10}\,(31)^3$ are $\IF(3,0,1)$, $\IF(10,3,1)$, and $\IF(3,10,4)$
respectively. 

Similarly, if $k=4$ and $\balF = (2,3,1,3)$ then
\[(2,3,1,3)\topL0 (3,1,2,1)\topL3(1,2,0,1)\topL1 (2,0,0,1) \topL2
(0,0,0,1),\]
so that
\[
\IF(2,3,1,3) = \Lambda_0\Lambda_3\Lambda_1\Lambda_2(4)
 = \Lambda_0\Lambda_3\Lambda_1(411)
 = \Lambda_0\Lambda_3(4122)
 = \Lambda_0(4111233) = 422234141.
\]
\end{examples}

\begin{proof}[of Theorem~\ref{thm:finite}]
Statement~(a) is obvious, since $k^{\alphaF_k}$ is the unique element
of $\RF(0,0,\ldots,0,\alphaF_k)$. 

For~(b), it suffices to show that $\IF(\balF)$ is in the image of
$\Lambda_n$, where $n=\lfloor \alphaF_1/\alphaF_k\rfloor$: the result then
follows immediately from Lemmas~\ref{lem:finiteLambda}
and~\ref{lem:actionLambda}. 

Since $\alphaF_1\ge n\alphaF_k$ there are elements of $\RF(\balF)$, and
hence, by Remark~\ref{rmk:obvious}, of $\MF(\balF)$, in which every
occurence of the letter~$k$ is followed by the word $1^n$, and such
elements of~$\MF(\balF)$ are smaller than any element of~$\MF(\balF)$
which does not have this property. Therefore
\[\IF(\balF) = k1^nW_1\,k1^nW_2\,\cdots\,k1^nW_{\alphaF_k}\]
for some words $W_r$ which do not contain the letter~$k$. Moreover,
the letters must be arranged in ascending order in each~$W_r$: that
is,
\[W_r = 1^{n_{r,1}}2^{n_{r,2}}\cdots (k-1)^{n_{r,k-1}}\]
for each~$r$, where the $n_{r,s}$ are non-negative integers. For if
this were not the case, then replacing each~$W_r$ with a word in which
the same letters are arranged in ascending order would decrease every
cyclic permutation of~$\IF(\balF)$ starting with~$k$, so that there
would be an element of $\MF(\balF)$ smaller than $\IF(\balF)$.

To show that $\IF(\balF)$ is in the image of~$\Lambda_n$, it therefore
suffices to show that $n_{r,1}\le 1$ for all~$r$. Observe first that
$\sum_{r=1}^{\alphaF_k}n_{r,1}=\alphaF_1-n\alphaF_k < \alphaF_k$, so that
at least one $n_{r,1}$ is zero, and in particular $n_{1,1}=0$ by
maximality of~$\IF(\balF)$.

Suppose for a contradiction that $n_{s,1}\ge 2$ for some
least~$s$. Define words~$W_r'$ for $1\le r\le \alphaF_k$ by $W_{s-1}' =
1W_{s-1}$, $W_s = 1W_s'$, and $W_r'=W_r$ for $r\not=s-1, s$: that is,
push one of the $1$s from $W_s$ to $W_{s-1}$. Then taking an
appropriate cyclic permutation yields an element~$W'$ of~$\MF(\balF)$
given by
\[W' =
k1^nW_t'\,k1^nW_{t+1}'\,\cdots\,k1^nW_{\alphaF_k}'\,k1^nW_1'\,\cdots
k1^nW_{t-1}',\]
where~$t$ is not equal to $s$ since $W_s'$ starts with the letter~$1$
by choice of~$s$, but $W_t'$ does not start with the letter~$1$ by
maximality of $W'$. Now
\[W'\,<\,
k1^nW_t\,k1^nW_{t+1}\,\cdots\,k1^nW_{\alphaF_k}\,k1^nW_1\,\cdots
k1^nW_{t-1}\,\le\, \IF(\balF),\] where the first inequality is by
definition of the words $W_r'$ together with $t\not=s$, and the second
is by maximality of~$\IF(\balF)$.  This contradicts that~$\IF(\balF)$
is the minimum element of~$\MF(\balF)$, establishing that~$\IF(\balF)$
is in the image of~$\Lambda_n$ as required.
\end{proof}

To connect this result with the formalism used in the general case,
observe that
\begin{equation}
\label{eq:CD}
\begin{CD}
\DelF_n @>\KF_n>>  \DelF  \\
@VV\pi V     @VV\pi V\\
\Delta_n @>K_n>> \Delta
\end{CD}
\end{equation}
commutes, where $\pi\colon\DelF\to\Delta$ is defined by
$\pi(\balF)=\balF/\sum\alphaF_i$. Moreover, the functions $\KF_n$ can be
gathered into a single function $\KF\colon\DelF\to\DelF$ defined by
$\KF(\balF) = \KF_{\lfloor\alphaF_1/\alphaF_k\rfloor}(\balF)$, giving rise
to an itinerary map $\PhiF\colon\DelF\to\N^\N$ defined by
\[\PhiF(\balF)_r = n \iff \KF^r(\balF)\in\DelF_n.\]
Since $\KF^r(\balF) = (0,0,\ldots,0,1)$, a fixed point of~$\KF$, for
some~$r$, the itinerary $\PhiF(\balF)$ has only finitely many non-zero
entries.

The following is then a restatement of Theorem~\ref{thm:finite}. Note
that it does not claim to give the minimum element of $\cM(\bal)$ for
rational~$\bal$, but only the minimum {\em periodic} element: that
this is in fact the minimum of $\cM(\bal)$ will follow from
Theorem~\ref{thm:infimax} below.

\begin{theorem}
\label{thm:restatement}
Let~$\bal\in \Delta\cap\Q^k$. Then the itinerary of $\bal$ is of
the form \mbox{$\Phi(\bal) = n_0\,n_1\,\ldots\,n_{r-1}\,\overline{0}$}, and the
minimum periodic element~$\MP(\bal)$ of $\cM(\bal)$ is equal to $\Wmap(\Phi(\bal))$.
\end{theorem}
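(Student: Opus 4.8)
The plan is to deduce Theorem~\ref{thm:restatement} from Theorem~\ref{thm:finite} by translating between words and periodic sequences. Since $\bal\in\Q^k$, write $\bal = \brho(\balF)$ for some primitive $\balF\in\DelF$. I would first show that $\Phi(\bal)$ coincides with $\PhiF(\balF)$, which follows immediately from the commutativity of diagram~(\ref{eq:CD}): the partition sets $\Delta_n$ and $\DelF_n$ correspond under $\pi$ since $\lfloor\alpha_1/\alpha_k\rfloor = \lfloor\alphaF_1/\alphaF_k\rfloor$ when $\bal = \pi(\balF)$, and $\pi\circ\KF_n = K_n\circ\pi$, so $\pi(\KF^r(\balF)) = K^r(\bal)$ for all $r$ and hence $\PhiF(\balF)_r = J(K^r(\bal)) = \Phi(\bal)_r$. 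Because $\KF^r(\balF) = (0,\ldots,0,1)$ for some $r$ (the remark following~(\ref{eq:CD})), $\PhiF(\balF)$, and therefore $\Phi(\bal)$, has the form $n_0\,n_1\,\ldots\,n_{r-1}\,\overline{0}$ as claimed.

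Next I would identify $\Wmap(\Phi(\bal))$ as the periodic sequence $\overline{W}$, where $W = \IF(\balF)$. Writing $\bn = \Phi(\bal) = n_0\ldots n_{r-1}\overline0$, Theorem~\ref{thm:finite}, applied iteratively along the itinerary (starting from the terminal vector $(0,\ldots,0,1)$ with $\IF = k$), gives $\IF(\balF) = \Lambda_{n_0}\circ\cdots\circ\Lambda_{n_{r-1}}(k) = \Lambda_{\bn,r-1}(k)$. Now $\Lambda_{\bn,s}(k) = \Lambda_{\bn,r-1}(\Lambda_0^{\,s-r+1}(k))$ for $s\ge r-1$, and since $\Lambda_0(k)=k$ this is just $\Lambda_{\bn,r-1}(k) = \IF(\balF)$ for every such $s$; the stabilised word is $\IF(\balF)$, so $\Wmap(\bn) = \overline{\Lambda_{\bn,r-1}(k)} = \overline{\IF(\balF)}$. (A small care point: one must note $\Lambda_0$ fixes the letter $k$, which is immediate from~(\ref{eq:lambda_n}).)

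It remains to show that $\overline{\IF(\balF)}$ is the minimum periodic element $\MP(\bal)$ of $\cM(\bal)$. A periodic element $\overline{V}$ lies in $\cR(\bal)$ precisely when $\brho(V) = \bal$, i.e.\ when $V$ has length a multiple $N$ of $|\balF|$ and $|V|_i = N\alphaF_i$ for each $i$; and $\overline V\in\cM$ exactly when $V$ (or some cyclic permutation of it) is a maximal word. Among such $V$ of length $N|\balF|$, the smallest maximal word is $\IF(N\balF) = \IF(\balF)^N$ by Remark~\ref{rmk:finite}~b), and $\overline{\IF(\balF)^N} = \overline{\IF(\balF)}$; since this holds for every $N\ge1$, the minimum periodic element of $\cM(\bal)$ is $\overline{\IF(\balF)} = \Wmap(\Phi(\bal))$. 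Here one uses the convention that a proper initial subword is larger, so that comparing periodic sequences reduces to comparing their repeating blocks correctly. The main obstacle is the bookkeeping in this last paragraph: one must be careful that the infimum over \emph{all} periodic elements — allowing arbitrarily long periods — is genuinely attained at period $|\balF|$, which is exactly what Remark~\ref{rmk:finite}~b) delivers, but the reduction of ``$\overline V$ maximal'' to ``$V$ maximal up to cyclic permutation'' and of ``$\overline V$ smallest'' to ``$V$ smallest among its cyclic class'' needs to be stated cleanly.
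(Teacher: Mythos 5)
Your proposal is correct and follows essentially the same route as the paper: reduce to the finite problem via the commutative diagram~(\ref{eq:CD}), identify $\Wmap(\Phi(\bal))$ with $\overline{\IF(\balF)}$ via Theorem~\ref{thm:finite}, and use $\IF(N\balF)=\IF(\balF)^N$ from Remark~\ref{rmk:finite}~b) to handle periodic elements of arbitrary period. The only quibble is that $\overline{V}\in\cM$ requires $V$ itself (not merely a cyclic permutation of $V$) to be maximal, but this does not affect the argument.
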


\begin{proof}
Let~$\balF\in\DelF$ be the smallest integer vector which is a
positive multiple of~$\bal$. Then any periodic element of~$\cM(\bal)$ is
of the form $\overline{W}$, where $W\in\MF(N\balF)$ for some $N\ge
1$. However $\IF(N\balF) = \IF(\balF)^N$ by
Remark~\ref{rmk:finite}b), so that the smallest periodic element
of~$\cM(\bal)$ is $\overline{\IF(\balF)}$.

It is immediate from~(\ref{eq:CD}) that $\Phi(\bal) = \PhiF(\balF)$,
so that in particular $\bn=\Phi(\bal)$ is of the given form. Then
\[\MP(\bal) = \overline{\IF(\balF)} = \overline{\Lambda_{\bn,r-1}(k)} =
\Wmap(\Phi(\bal))\]
as required.
\end{proof}

\begin{remark}
In the computer science and combinatorics of words literature, the
term {\em Lyndon words} is used for words that are minimal amongst
their cyclic permutations with respect to the lexicographic
order~\cite{BP,Lyndon}. Therefore maximal words are the same as Lyndon
words when the ordering of~$\cA$ is reversed, and the results of this
section can be rephrased as determining the largest Lyndon word with a
given number of each of the letters.
\end{remark}

\section{Proof of Theorem~\ref{thm:infimax}: \,\, $\IM(\bal) = \Wmap(\Phi(\bal))$}
\label{sec:infimax}
In this section we prove that the infimum $\IM(\bal)$ of
$\cM(\bal)$ is given by $\Wmap(\Phi(\bal))$. We show first
(Lemma~\ref{lem:lower-bound}) that $\Wmap(\Phi(\bal))$ is a lower bound of
$\cM(\bal)$, and then (Lemma~\ref{lem:closure}) that it lies in the
closure of $\cM(\bal)$.

That $\Wmap(\Phi(\bal))$ is a lower bound of~$\cM(\bal)$ is a special case
of a more general result. Given any $w\in\Sigma$, define $\sup w\in \cM$ by
\[\sup w = \sup_{r\ge 0}\sigma^r(w),\]
so that $w = \sup w$ if and only if $w\in
\cM$. Lemma~\ref{lem:lower-bound} below states that if $w\in \cR(\bal)$ then
$\Wmap(\Phi(\bal))\le \sup w$: in particular, if $w\in \cM(\bal)$ then
$\Wmap(\Phi(\bal))\le w$ as required.

The proof uses the finite version of the result as expressed by
Theorem~\ref{thm:restatement}, and we start with a lemma which
provides appropriate rational approximations to $\bal$ together
with corresponding periodic approximations to the supremum of an
element of~$\cR(\bal)$.

\begin{lemma}
\label{lem:sup-approx}
Let~$\bal\in\Delta$, $w\in \cR(\bal)$, $R\in\N$ and $\epsilon>0$. Then
there is some~$\bbeta\in\Delta\cap\Q^k$ and a periodic $v\in
\cM(\bbeta)$ such that $d_\infty(\bal,\bbeta)<\epsilon$ and $(\sup w)^{(R)}=v^{(R)}$.
\end{lemma}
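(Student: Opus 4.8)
The plan is to build $v$ and $\bbeta$ directly from a sufficiently long initial segment of $w$. Since $w\in\cR(\bal)$, the Abelianized proportion vectors $\brho(w^{(r)})$ converge to $\bal$; choose $r$ large enough that $\brho(w^{(r)})$ is within $\epsilon/2$ of $\bal$ in $d_\infty$, that $r>R$, and (crucially) that the initial segment $w^{(r)}$ already determines $(\sup w)^{(R)}$. This last requirement is where one must be a little careful: $(\sup w)^{(R)}$ depends only on which of the first few shifts $\sigma^i(w)$ are large, and each comparison $\sigma^i(w)$ versus $w$ used to produce the first $R$ letters of $\sup w$ is decided by finitely many letters of $w$; so for all large enough $r$ the word $\left(\sup_{0\le i< r}\sigma^i(w^{(r-i)}\ldots)\right)$ — more precisely the maximal cyclic rotation phenomenon — stabilises in its first $R$ letters. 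I would make this precise by taking $W=w^{(r)}$ with $r$ large, letting $W'$ be the maximal cyclic permutation of $W$ (Remark~\ref{rmk:obvious}), and observing that $\overline{W'}$ is a periodic maximal sequence whose first $R$ letters agree with $(\sup w)^{(R)}$ once $r$ is large: both are governed by the same finite comparisons among the heads of the shifts of $w$.

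Concretely I would set $\bbeta' = \brho(W)\in\Delta\cap\Q^k$ and let $v$ be the periodic maximal sequence associated with $\bbeta'$ by Theorem~\ref{thm:restatement}, namely $v=\MP(\bbeta')=\overline{\IF(\balF)}$ where $\balF$ is the primitive integer vector proportional to $\bbeta'$. By construction $v\in\cM(\bbeta')$ and $v$ is periodic, and $d_\infty(\bal,\bbeta')<\epsilon/2<\epsilon$. The one remaining point is the requirement $(\sup w)^{(R)}=v^{(R)}$: the sequence $v=\overline{\IF(\balF)}$ is the \emph{smallest} periodic maximal sequence with letter-proportions $\bbeta'$, whereas $\overline{W'}$ is merely \emph{some} periodic maximal sequence with those proportions, so a priori $v\le \overline{W'}$ but they need not share an initial segment. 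To fix this I would instead not insist on using exactly $\brho(W)$: I would replace $W$ by $W'$ (its maximal rotation) and then perturb the proportions slightly — e.g. append a short block of letters to $W'$, or equivalently pass to $N W'$-type data — so that the resulting rational vector $\bbeta$ is within $\epsilon$ of $\bal$ and the associated $\IF$ begins with $W'$; alternatively, and more cleanly, observe that $(\sup w)^{(R)}$ is itself the initial segment of a maximal word, so it is $\le \IF(\balF)^{(R)}$ by minimality only in the wrong direction — hence the honest route is to note that $(\sup w)^{(R)}$ is realised by the maximal rotation $W'$ of $W$, and to choose $\bbeta$ so that $\IF$ of its primitive vector has $W'$ as a prefix. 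Since $W'$ is maximal, $W'$ \emph{is} a prefix of $\IF$ of \emph{any} vector whose $\IF$ starts with $W'$; such vectors exist near $\brho(W)$ because one may always pad $W'$ on the right with the block of letters $\Lambda_{\bn,r}$ would produce, keeping proportions within $\epsilon$.

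The main obstacle, then, is exactly this tension between ``$v$ must be the canonical (smallest) periodic maximal sequence of $\cM(\bbeta)$'' and ``$v$ must agree with $(\sup w)^{(R)}$, which comes from an arbitrary element of $\cR(\bal)$.'' I expect the cleanest resolution is the padding argument: take $r\gg R$, let $U=w^{(r)}$, let $U'$ be a maximal cyclic rotation of $U$, so $\overline{U'}\in\cM$ agrees with $\sup w$ on the first $R$ letters (for $r$ large, since the finitely many comparisons determining $(\sup w)^{(R)}$ only see the head of $w$); then choose $\bbeta\in\Delta\cap\Q^k$ with $d_\infty(\bal,\bbeta)<\epsilon$ whose primitive integer vector $\balF$ satisfies that $U'$ is an initial subword of $\IF(\balF)$ — which holds for instance when $\balF$ is a large multiple of $\brho(U')=\brho(U)$, using Remark~\ref{rmk:finite}b) ($\IF(N\balF)=\IF(\balF)^N$) only after first arranging that $\IF(\brho(U')\text{-vector})$ itself starts with $U'$, and when that fails, enlarging $r$ so that $U'=w^{(r)}$ is long enough to be forced as a prefix of the minimax. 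Granting earlier results (Theorem~\ref{thm:restatement}, Remarks~\ref{rmk:obvious} and~\ref{rmk:finite}), all the remaining verifications — that $v$ is periodic, maximal, in $\cR(\bbeta)$, and that the metric estimate holds — are immediate, so the write-up reduces to carefully pinning down how long an initial segment of $w$ one must retain.
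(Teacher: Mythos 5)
There are two problems with your proposal: one self-inflicted and one a genuine gap. First, the lemma only asks for \emph{some} periodic $v\in\cM(\bbeta)$, not for the minimal one. By insisting that $v=\MP(\bbeta')=\overline{\IF(\balF)}$ you create the extra task of forcing $\IF(\balF)$ to begin with a prescribed word, which your various fixes (appending blocks, enlarging $r$) never actually accomplish --- and which is essentially the content of the harder Lemma~\ref{lem:closure}, proved only afterwards, so this route risks circularity. The paper simply takes $v$ to be the maximal shift of an explicitly constructed periodic sequence, and $\bbeta$ to be its vector of proportions; nothing about minimality is needed.

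Second, and more seriously, the step ``let $W'$ be the maximal cyclic rotation of $W=w^{(r)}$; then $\overline{W'}$ agrees with $\sup w$ in its first $R$ letters once $r$ is large'' is false as stated, and repairing it is the real content of the lemma. The obstruction is wrap-around: a length-$R$ subword of $\overline{W}$ straddling the boundary between consecutive copies of $W$ need not occur in $w$ at all, and can exceed $(\sup w)^{(R)}$ for arbitrarily large $r$. (With $k=2$ and $w=\overline{21}$, for every odd $r$ the maximal rotation of $w^{(r)}=212\cdots 2$ begins $22$, whereas $\sup w$ begins $21$.) No appeal to ``the finitely many comparisons determining $(\sup w)^{(R)}$'' controls these spurious subwords. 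The paper's proof handles this by a small but essential trick: writing $s=\sup w$ and choosing $r$ with $(\sigma^r(w))^{(R)}=s^{(R)}$, it takes the period to be $s^{(R)}\,W\,1^R$, where $s^{(R)}\,W$ is a long initial subword of $\sigma^r(w)$ --- that is, it pads on the right with $R$ copies of the smallest letter. Since the maximal shift of $\overline{s^{(R)}\,W\,1^R}$ must begin with the letter $k$, which does not occur in the padding block, its first $R$ letters form a non-wrapping subword of $s^{(R)}\,W\,1^R$; this is dominated position by position by the corresponding subword of $s^{(R)}\,W\,U$ (where $U$ is what actually follows in $\sigma^r(w)$), hence is a lower bound for a genuine subword of $w$ and so is $\le s^{(R)}$, while the reverse inequality is immediate. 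Your write-up is missing this padding idea, and without it (or a substitute) the construction does not go through.
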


\begin{proof}
Write $s = \sup w$. By definition of the supremum, there is some $r\ge
0$ such that $(\sigma^r(w))^{(R)} = s^{(R)}$. Since $\sigma^r(w)\in
\cR(\bal)$, there is an initial subword $s^{(R)}\,W$ of $\sigma^r(w)$
long enough that
\[d_\infty(\bal, \brho(s^{(R)}\,W\,1^R)) < \epsilon.\]
Let $U$ be the length~$R$ word with the property that
$(\sigma^r(w))^{(2R+|W|)}= s^{(R)}\,W\,U$.

Let~$v\in \cM$ be the maximal shift of the periodic sequence
$u=\overline{s^{(R)}\,W\,1^R}$. We shall show that $v^{(R)} = s^{(R)}$
which will establish the result, with $\bbeta =
\brho(s^{(R)}\,W\,1^R)$. 

 Since $s$, and hence~$u$, begins with the letter~$k$, $v^{(R)}$
is a subword of $s^{(R)}\,W\,1^R \le s^{(R)}\,W\,U$: but every
length~$R$ subword of $s^{(R)}\,W\,U$ is a subword of $w$, and hence
is less than or equal to $s^{(R)}$ by the definition of the
supremum. Therefore $v^{(R)}\le s^{(R)}$. On the other hand, however,
$v^{(R)} \ge u^{(R)} = s^{(R)}$, since $v\ge u$. This establishes the result.
\end{proof}

\begin{lemma}
\label{lem:lower-bound}
Let~$\bal\in\Delta$ and $w\in \cR(\bal)$. Then $\Wmap(\Phi(\bal)) \le
\sup w$. In particular, $\Wmap(\Phi(\bal))$ is a lower
bound of $\cM(\bal)$.
\end{lemma}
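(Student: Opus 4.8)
The plan is to reduce the statement to the finite (periodic) case already established in Theorem~\ref{thm:restatement}, using the lower semi-continuity of $\Wmap\circ\Phi$ from Corollary~\ref{cor:comp-LSC} together with the approximation provided by Lemma~\ref{lem:sup-approx}. Fix $\bal\in\Delta$ and $w\in\cR(\bal)$, and write $s=\sup w$; I want to show $\Wmap(\Phi(\bal))\le s$. It suffices to show that for every $R\in\N$ we have $\Wmap(\Phi(\bal))^{(R)}\le s^{(R)}$, so fix $R$.

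First I would apply Corollary~\ref{cor:comp-LSC}: since $\Wmap\circ\Phi$ is lower semi-continuous at~$\bal$, there is some $\epsilon>0$ such that $d_\infty(\bal,\bbeta)<\epsilon$ implies $\Wmap(\Phi(\bbeta))^{(R)}\ge \Wmap(\Phi(\bal))^{(R)}$. Next, feed this $\epsilon$ (and $R$) into Lemma~\ref{lem:sup-approx}: this produces a rational $\bbeta\in\Delta\cap\Q^k$ with $d_\infty(\bal,\bbeta)<\epsilon$ and a periodic $v\in\cM(\bbeta)$ with $v^{(R)}=s^{(R)}$. Now for rational $\bbeta$, Theorem~\ref{thm:restatement} identifies $\Wmap(\Phi(\bbeta))$ as the minimum periodic element $\MP(\bbeta)$ of $\cM(\bbeta)$; since $v$ is a periodic element of $\cM(\bbeta)$, we get $\Wmap(\Phi(\bbeta))\le v$, and in particular $\Wmap(\Phi(\bbeta))^{(R)}\le v^{(R)}=s^{(R)}$.

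Chaining the two inequalities gives
\[
\Wmap(\Phi(\bal))^{(R)}\le \Wmap(\Phi(\bbeta))^{(R)}\le v^{(R)}=s^{(R)}.
\]
Since this holds for every $R\in\N$, we conclude $\Wmap(\Phi(\bal))\le s=\sup w$, as claimed. Taking $w\in\cM(\bal)$, so that $\sup w=w$, yields that $\Wmap(\Phi(\bal))$ is a lower bound of $\cM(\bal)$.

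The substantive content has already been isolated into the two cited ingredients, so the only subtlety in assembling them is to be careful about the direction of the inequalities — both $\ge$'s coming out of lower semi-continuity and out of "$v\ge u$" in Lemma~\ref{lem:sup-approx} point the right way once one tracks that truncation $x\mapsto x^{(R)}$ is order-preserving and that $\Wmap(\Phi(\bbeta))$ is a \emph{lower} bound among periodic elements. I expect the main (minor) obstacle to be purely bookkeeping: confirming that the $\epsilon$ and $R$ can be chosen in the right order (first pick $\epsilon$ from lower semi-continuity for the given $R$, then invoke Lemma~\ref{lem:sup-approx} with that $\epsilon$), and that $v$ genuinely lies in $\cM(\bbeta)$ — both of which are guaranteed by the statements as given.
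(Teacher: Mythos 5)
Your proposal is correct and follows essentially the same route as the paper: fix $R$, extract $\epsilon$ from the lower semi-continuity of $\Wmap\circ\Phi$ (Corollary~\ref{cor:comp-LSC}), invoke Lemma~\ref{lem:sup-approx} to get a rational $\bbeta$ and periodic $v\in\cM(\bbeta)$ with $v^{(R)}=s^{(R)}$, and use Theorem~\ref{thm:restatement} to bound $\Wmap(\Phi(\bbeta))\le v$ before chaining the inequalities. The order of quantifiers and the direction of every inequality match the paper's argument exactly.
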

\begin{proof}
Write $s = \sup w$. To show that $\Wmap(\Phi(\bal))\le s$, it suffices
to show that \mbox{$\Wmap(\Phi(\bal))^{(R)} \le s^{(R)}$} for
every~$R\in\N$. Fix such an~$R$.

By the lower semi-continuity of $\Wmap\circ\Phi$
(Corollary~\ref{cor:comp-LSC}), there is some~$\epsilon>0$ such that
if $d_\infty(\bal, \bbeta)<\epsilon$ then $\Wmap(\Phi(\bal))^{(R)} \le
\Wmap(\Phi(\bbeta))^{(R)}$.

By Lemma~\ref{lem:sup-approx} there is some~$\bbeta$ with
$d_\infty(\bal,\bbeta)<\epsilon$ and some periodic $v\in \cM(\bbeta)$
with \mbox{$s^{(R)} = v^{(R)}$}. Theorem~\ref{thm:restatement} gives
$v\ge \Wmap(\Phi(\bbeta))$. Then
\[\Wmap(\Phi(\bal))^{(R)} \le \Wmap(\Phi(\bbeta))^{(R)} \le v^{(R)} =
s^{(R)}\]
as required.
\end{proof}

We now turn to proving that $\Wmap(\Phi(\bal)) \in
\overline{\cM(\bal)}$. To do this we need to construct elements
of~$\cM(\bal)$ which agree with $\Wmap(\Phi(\bal))$ on arbitrarily long
initial subwords, and the following straightforward lemma will be used
for this purpose.

\begin{lemma}
\label{lem:rational-approx}
Let~$\bal\in\Delta$, $R\in\N$, and $\epsilon>0$. Then there
is some $\bbeta\in \Delta\cap\Q^k$ such that $d_\infty(\bal,
\bbeta)<\epsilon$ and $\Phi(\bbeta)^{(R)} = \Phi(\bal)^{(R)}$.
\end{lemma}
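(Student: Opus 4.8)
The plan is to use the structure of $\Phi$ as a piecewise projectivity together with the fact, recorded in Lemma~\ref{lem:itinerary0bar}, that itineraries of the form $W\overline{0}$ are realised by a unique point which moreover lies in $\Delta\cap\Q^k$. Fix $\bal\in\Delta$, $R\in\N$ and $\epsilon>0$, and write $\bn=\Phi(\bal)$. First I would set $\bbeta_0 := K^R(\bal)\in\Delta$, which has itinerary $\sigma^R(\bn)$, and replace it by the point $\bgamma_0 := (0,0,\ldots,0,1)$, which has itinerary $\overline{0}$. The idea is to pull $\bgamma_0$ back through the first $R$ branches of $K$ that $\bal$ actually follows: define $\bbeta := K_{n_0}^{-1}\circ K_{n_1}^{-1}\circ\cdots\circ K_{n_{R-1}}^{-1}(\bgamma_0)$. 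By Lemma~\ref{lem:itinerary0bar} this $\bbeta$ has itinerary exactly $n_0n_1\ldots n_{R-1}\overline{0}$, so in particular $\Phi(\bbeta)^{(R)}=\Phi(\bal)^{(R)}$, and $\bbeta\in\Delta\cap\Q^k$ since it is a composition of the rational maps $K_{n_i}^{-1}$ applied to a rational point. So the only thing left to arrange is $d_\infty(\bal,\bbeta)<\epsilon$.

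To get the metric estimate I would not use $\bgamma_0=(0,\ldots,0,1)$ but rather choose $\bgamma$ close to $\bbeta_0=K^R(\bal)$. Observe that $\bal = K_{n_0}^{-1}\circ\cdots\circ K_{n_{R-1}}^{-1}(\bbeta_0)$ exactly, essentially by definition of the itinerary (each $K_{n_i}$ is a bijection from $\Delta_{n_i}$ onto $\Delta$, and $K^i(\bal)\in\Delta_{n_i}$). The composition $F := K_{n_0}^{-1}\circ\cdots\circ K_{n_{R-1}}^{-1}\colon\Delta\to\Delta$ is a fixed continuous (indeed projective) map, hence uniformly continuous on the compact set $\oD$ (the closure of $\Delta$, or one can work on a suitable compact subset); so there is $\delta>0$ with $d_\infty(\bgamma,\bbeta_0)<\delta\implies d_\infty(F(\bgamma),F(\bbeta_0))=d_\infty(F(\bgamma),\bal)<\epsilon$. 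Now I need a rational point $\bgamma\in\Delta\cap\Q^k$ within $\delta$ of $\bbeta_0$ whose itinerary is $\overline{0}$ — but the unique point with itinerary $\overline{0}$ is $(0,\ldots,0,1)$, which is generally not within $\delta$ of $\bbeta_0$. The fix is to allow a longer constant tail of zeros: pick any rational $\bgamma\in\Delta\cap\Q^k$ with $d_\infty(\bgamma,\bbeta_0)<\delta$ (rationals are dense), let $\bbeta := F(\bgamma)$, and note that since $\bgamma$ is rational Theorem~\ref{thm:restatement} gives $\Phi(\bgamma)=m_0m_1\ldots m_{s-1}\overline{0}$ for some $s$. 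Then $\Phi(\bbeta) = n_0\ldots n_{R-1}\,m_0\ldots m_{s-1}\overline{0}$, which still agrees with $\Phi(\bal)$ on its first $R$ entries, $\bbeta$ is rational, and $d_\infty(\bal,\bbeta)<\epsilon$ by choice of $\delta$.

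A point I would want to be careful about is that $\Phi(F(\bgamma))$ really is the concatenation $n_0\ldots n_{R-1}\Phi(\bgamma)$. This holds because $K_{n_{R-1}}^{-1}(\bgamma)\in\Delta_{n_{R-1}}$, so $J\bigl(K_{n_{R-1}}^{-1}(\bgamma)\bigr)=n_{R-1}$ and $K\bigl(K_{n_{R-1}}^{-1}(\bgamma)\bigr)=\bgamma$; iterating, $K^R(F(\bgamma))=\bgamma$ and the first $R$ itinerary entries of $F(\bgamma)$ are $n_0,\ldots,n_{R-1}$. So $\Phi(\bbeta)^{(R)}=(n_0,\ldots,n_{R-1})=\Phi(\bal)^{(R)}$. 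The main (modest) obstacle is purely this bookkeeping — keeping straight that the itinerary of a $K_{n_i}^{-1}$-preimage prepends $n_i$, and that uniform continuity of the fixed finite composition $F$ on a compact set is what converts "$\bgamma$ near $K^R(\bal)$'' into "$\bbeta$ near $\bal$''; there is no real analytic difficulty, and the density of $\Delta\cap\Q^k$ in $\Delta$ together with Lemma~\ref{lem:itinerary0bar}/Theorem~\ref{thm:restatement} supplies everything else.
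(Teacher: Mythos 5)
Your argument is correct and is essentially the paper's proof: the paper phrases the same pullback as an induction on $R$ (approximate $K(\bal)$ by rationals with the right length-$(R-1)$ itinerary prefix, then apply the homeomorphism $K_{J(\bal)}^{-1}$), which unrolls to exactly your choice of a rational $\bgamma$ near $K^R(\bal)$ followed by the continuous map $F=K_{n_0}^{-1}\circ\cdots\circ K_{n_{R-1}}^{-1}$. Your bookkeeping that $K_{n}^{-1}$ maps $\Delta$ into $\Delta_n$, so that the itinerary of $F(\bgamma)$ begins with $n_0\ldots n_{R-1}$, is the same justification the paper relies on implicitly.
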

\begin{proof}
The proof is by induction on~$R$, with the base case~$R=0$ being the
statement that rational elements are dense in~$\Delta$.

Suppose then that~$R>0$. Let~$n=J(\bal)$, so that
$\bal\in\Delta_n$. Recall that $K|_{\Delta_n} =
K_n\colon\Delta_n\to\Delta$ is a homeomorphism. By the inductive
hypothesis, there is a sequence $(\bgamma_i)$ in~$\Delta\cap\Q^k$
converging to $K(\bal)$ with
$\Phi(\bgamma_i)^{(R-1)} = \Phi(K(\bal))^{(R-1)}$ for
all~$i$. Let~$\bbeta = K_n^{-1}(\bgamma_i)$ for some~$i$ large enough
that $d_\infty(\bal, \bbeta)<\epsilon$.
\end{proof}

\begin{lemma}
\label{lem:closure}
Let~$\bal\in\Delta$. Then $\Wmap(\Phi(\bal))\in\overline{\cM(\bal)}$.
\end{lemma}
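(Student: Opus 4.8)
The goal is to show that $\Wmap(\Phi(\bal))$ lies in the closure of $\cM(\bal)$; combined with Lemma~\ref{lem:lower-bound}, this will give $\IM(\bal)=\Wmap(\Phi(\bal))$. The natural approach is to produce, for each $R\in\N$, an element $w\in\cM(\bal)$ whose initial subword $w^{(R)}$ agrees with $\Wmap(\Phi(\bal))^{(R)}$. First I would use Lemma~\ref{lem:rational-approx} to pick a rational $\bbeta\in\Delta\cap\Q^k$, arbitrarily close to $\bal$, with $\Phi(\bbeta)^{(R')}=\Phi(\bal)^{(R')}$ for some $R'$ chosen large enough (how large will be dictated below). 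By Theorem~\ref{thm:restatement}, $\Wmap(\Phi(\bbeta))$ is the minimum periodic element $\MP(\bbeta)$ of $\cM(\bbeta)$, so it is itself an element of $\cM(\bbeta)$; in particular it is a genuine point of $\cM$ with rotation vector $\bbeta$, but its rotation vector is $\bbeta$, not $\bal$, so it does not yet lie in $\cM(\bal)$.

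\textbf{Correcting the rotation vector.} The key idea is that $\MP(\bbeta)$ is periodic, say $\MP(\bbeta)=\overline{W}$ with $W$ a maximal word, and I can concatenate distinct such periodic blocks coming from a sequence of rational vectors $\bbeta_j\to\bal$ to build a sequence whose running proportions genuinely converge to $\bal$. Concretely, choosing a sequence $\bbeta_j\in\Delta\cap\Q^k$ with $\bbeta_j\to\bal$ and with longer and longer agreement $\Phi(\bbeta_j)^{(R_j)}=\Phi(\bal)^{(R_j)}$, $R_j\to\infty$, I would form $w=W_1^{m_1}W_2^{m_2}W_3^{m_3}\cdots$ where $\overline{W_j}=\MP(\bbeta_j)$ and the exponents $m_j$ are chosen to grow fast enough that $\brho(w^{(r)})\to\bal$, yet with $W_1$ prepended (or the whole word shifted to its maximal cyclic form) to ensure maximality. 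Since each $W_j$ begins with the letter $k$ (Lemma~\ref{lem:eventually-k} applied to $\Wmap$), and since successive $\Lambda$-substitutions make $\Wmap(\Phi(\bbeta_j))$ agree with $\Wmap(\Phi(\bal))$ on at least $1+\sum_{s=0}^{R_j-1}(\Phi(\bal))_s$ letters (the length estimate in the proof of Lemma~\ref{lem:props_w}), the head $w^{(R)}$ will agree with $\Wmap(\Phi(\bal))^{(R)}$ provided $R_1$ was chosen large enough at the outset.

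\textbf{Ensuring maximality.} The remaining point is that the concatenation $W_1^{m_1}W_2^{m_2}\cdots$ need not itself be maximal, even though each $W_j$ is a maximal word and each begins with $k$. Here I would exploit that $W_1=\Lambda_{\Phi(\bal)_0}\circ\cdots\circ\Lambda_{\Phi(\bal)_{R_1-1}}(k\cdots)$, so that $W_1$ begins with a long common prefix of $\Wmap(\Phi(\bal))$; as long as this prefix is strictly larger than every proper tail of the infinite concatenation that could overtake it — which it is, because $\Wmap(\Phi(\bal))$ is maximal (Lemma~\ref{lem:props_w}) and the $W_j$ for $j\ge 2$ are built from the same initial substitutions — prepending a sufficiently long power of $W_1$, or passing to the maximal cyclic rearrangement of a suitable finite truncation, yields an element of $\cM$. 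The cleanest implementation may be to take $w=W_1^{m_1}\,(\text{maximal shift of } W_2^{m_2}W_3^{m_3}\cdots)$ with $m_1$ large: since $W_1$ begins with $k$ and agrees with $\Wmap(\Phi(\bal))$ far beyond position $R$, no tail can exceed $w$. I expect this maximality bookkeeping — making the exponents $m_j$ and the truncation point interact correctly so that both $\brho(w^{(r)})\to\bal$ \emph{and} $w\in\cM$ \emph{and} $w^{(R)}=\Wmap(\Phi(\bal))^{(R)}$ hold simultaneously — to be the main obstacle; the individual ingredients (density of rationals in itinerary fibres, periodic minimax via Theorem~\ref{thm:restatement}, the substitution length estimate) are all already available.
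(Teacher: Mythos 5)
Your overall architecture is the same as the paper's: take rational approximants $\bbeta_j\to\bal$ whose itineraries agree with $\Phi(\bal)$ on growing initial segments (Lemma~\ref{lem:rational-approx}), let $W_j$ be the corresponding periodic minimax words from Theorem~\ref{thm:restatement}, and concatenate $W_1^{m_1}W_2^{m_2}\cdots$ with exponents growing fast enough that the running letter-proportions converge to $\bal$. You correctly identify maximality of this concatenation as the crux, but the fixes you propose do not close the gap. Prepending a long power of $W_1$ does not help: the tail beginning at the $W_j$ block ($j\ge 2$) must be compared with $\overline{W_1}=\Wmap(\Phi(\bbeta_1))$, and by the order-preservation of $\Wmap$ this comparison is decided by the first disagreement between the itineraries $\Phi(\bbeta_j)$ and $\Phi(\bbeta_1)$ beyond their common prefix --- which Lemma~\ref{lem:rational-approx} does not control and which can go either way; if $\Phi(\bbeta_j)>\Phi(\bbeta_1)$ in the reverse-lexicographic order, that tail strictly exceeds the head no matter how large $m_1$ is. Taking ``the maximal cyclic rearrangement of a finite truncation'' is also not available, since you need an infinite sequence with rotation vector $\bal$, and taking the maximal shift of the infinite tail destroys your control over its initial subword.

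The paper's resolution is a specific trick you are missing. Fix $R$ with $n_{R+1}>0$ and require every $\Phi(\bbeta_j)$ to agree with $\bn=\Phi(\bal)$ up to index $R+1$. Then each $W_j$ lies in the image of $\Lambda_{\bn,R}\circ\Lambda_{n_{R+1}}$, so the whole tail $W_0^{p_0}W_1^{p_1}\cdots$ can be written as $\Lambda_{\bn,R}(\Lambda_{n_{R+1}}(u))$ for some $u\in\Sigma$. Now prepend not a power of $W_1$ but the single ``defective'' word $U=\Lambda_{\bn,R}(\Lambda_{n_{R+1}-1}(k))=\Lambda_{\bn,R}(k1^{n_{R+1}-1})$. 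The sequence $k1^{n_{R+1}-1}\Lambda_{n_{R+1}}(u)$ is maximal for a purely local reason: its initial $k$ is followed by exactly $n_{R+1}-1$ ones and then a letter other than $1$, whereas every other occurrence of $k$ is followed by at least $n_{R+1}$ consecutive ones, so no tail beginning with $k$ can exceed the head. Maximality of $w=U\,W_0^{p_0}W_1^{p_1}\cdots$ then follows by pushing this through $\Lambda_{\bn,R}$ via Lemma~\ref{lem:actionLambda}, and $w$ still begins with $\Lambda_{\bn,R}(k)$, so it witnesses the required agreement with $\Wmap(\Phi(\bal))$. Without this (or an equivalent) device your construction has a genuine gap at the maximality step.
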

\begin{proof}
If $\bal\in\Delta\cap\Q^k$ then $\Wmap(\Phi(\bal))\in \cM(\bal)$ by
Theorem~\ref{thm:restatement}, so we assume that $\bal\not\in\Q^k$,
and in particular, by Lemma~\ref{lem:itinerary0bar}, that
$\bn=\Phi(\bal)$ has infinitely many non-zero entries.

It suffices to find, for each $R$, an element~$w$ of~$\cM(\bal)$ with
initial subword $\Lambda_{\bn,R}(k)$. We can assume that $n_{R+1}>0$,
since otherwise we increase~$R$ until this is the case.

Using Lemma~\ref{lem:rational-approx}, find for each~$r\ge 0$ an
element $\bbeta_r$ of $\Delta\cap\Q^k$ with
$d_\infty(\bal,\bbeta_r)<1/2^r$, whose itinerary 
\[\Phi(\bbeta_r) = \bn_r = n_{r,0} n_{r,1}\ldots
n_{r,L_r}\overline{0},\]
satisfies $n_{r,s} = n_s$ for $0\le s\le R+1$.

Set
$U=\Lambda_{n_0}\Lambda_{n_1}\cdots\Lambda_{n_R}\Lambda_{n_{R+1}-1}(k)$,
and $W_r = \Lambda_{n_{r,0}}\Lambda_{n_{r,1}}\cdots
\Lambda_{n_{r,L_r}}(k)$ for each~$r$, so that $\brho(W_r) = \bbeta_r$.
Write $L = |U|$ and $L_r = |U_r|$ for $r\ge 0$. Choose integers
$p_r\ge 1$ for $r\ge 0$ inductively to satisfy $\sum_{s=0}^r p_sL_s >
2^rL_{r+1}$. Finally, set
\begin{equation}
\label{eq:w}
w = U\,W_0^{p_0}W_1^{p_1}W_2^{p_2}\ldots.
\end{equation}

We will show that $w\in \cM(\bal)$, which will establish the result
since it has initial subword $\Lambda_{\bn,R}(k)$. To show that $w\in
\cR(\bal)$, let $I = \{(r,s)\,:\,r\in\N,\, 0\le s < p_r\}$ ordered
lexicographically, and define an increasing function $\ell\colon
I\to\N$ by $\ell(r,s) = L + sL_r + \sum_{t=0}^{r-1} p_tL_t$, the index
of the beginning of the $(s+1)^\text{th}$ subword $W_r$
in~(\ref{eq:w}). Now since $\brho(W_r)\to\bal$ as $r\to\infty$ we
have that for all~$\epsilon>0$ there is some $J$ such that
$d_\infty(\bal, \brho(w^{(\ell(r,s))}))<\epsilon$ for all $(r,s) > (J,0)$. On
the other hand, given any $t\ge\ell(1,0)$, we have $d_\infty(\brho(w^{(t)}),
\brho(w^{(\ell(r,s))})) < 1/2^r$, where $(r,s)$ is greatest with
$\ell(r,s)\le t$, by choice of the $p_r$. Therefore $d_\infty(\bal,
\brho(w^{(t)})) \to 0$ as $t\to\infty$ as required.

It remains to show that~$w$ is maximal. Now we can write
\[w = \Lambda_{n_0}\Lambda_{n_1}\cdots \Lambda_{n_R}
(\Lambda_{n_{R+1}-1}(k) \Lambda_{n_{R+1}}(u))\]
for some $u\in\Sigma$. However $\Lambda_{n_{R+1}-1}(k)
\Lambda_{n_{R+1}}(u) = k\,1^{n_{R+1}-1} \Lambda_{n_{R+1}}(u)$ is
maximal, since it has initial subword $k\,1^{n_{R+1}-1}$ followed by a
letter other than~$1$, whereas every letter~$k$ in
$\Lambda_{n_{R+1}}(u)$ is followed by at least $n_{R+1}$ consecutive
$1$s. Therefore $w$ is also maximal by Lemma~\ref{lem:actionLambda}.
\end{proof}

Combining Lemmas~\ref{lem:lower-bound} and~\ref{lem:closure}
gives the result we have been working towards.

\begin{theorem}
\label{thm:infimax}
Let $\bal\in\Delta$. Then $\IM(\bal) = \Wmap(\Phi(\bal))$.
\hspace*{\fill}\proofbox
\end{theorem}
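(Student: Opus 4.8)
The plan is to establish Theorem~\ref{thm:infimax} as an immediate consequence of the two lemmas already proved in this section, namely Lemma~\ref{lem:lower-bound} and Lemma~\ref{lem:closure}, together with the fact that $\Wmap(\Phi(\bal))$ is itself maximal (Lemma~\ref{lem:props_w}). The whole content of the theorem is that $\inf\cM(\bal) = \Wmap(\Phi(\bal))$, so there are exactly two things to check: that $\Wmap(\Phi(\bal))$ is a lower bound of $\cM(\bal)$, and that it is the \emph{greatest} lower bound.

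First I would note that $\Wmap(\Phi(\bal))$ is a lower bound of $\cM(\bal)$: this is precisely the second assertion of Lemma~\ref{lem:lower-bound}, obtained by specialising the inequality $\Wmap(\Phi(\bal)) \le \sup w$ to maximal $w\in\cM(\bal)$, for which $\sup w = w$. Next, to see it is the greatest lower bound, suppose $z\in\Sigma$ is any lower bound of $\cM(\bal)$; I must show $z \le \Wmap(\Phi(\bal))$. By Lemma~\ref{lem:closure}, $\Wmap(\Phi(\bal))\in\overline{\cM(\bal)}$, so there is a sequence $(w_j)$ in $\cM(\bal)$ converging to $\Wmap(\Phi(\bal))$ in the product topology. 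Since $z\le w_j$ for all $j$, and since the relation $\{(x,y) : x\le y\}$ is closed in $\Sigma\times\Sigma$ (the order topology on $\Sigma$ agrees with the product topology, and $\le$ is determined by finitely many coordinates once two sequences differ), passing to the limit gives $z \le \Wmap(\Phi(\bal))$. Hence $\Wmap(\Phi(\bal))$ dominates every lower bound, so it equals $\inf\cM(\bal) = \IM(\bal)$.

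I do not expect any genuine obstacle here: the real work has already been carried out in Lemmas~\ref{lem:lower-bound} and~\ref{lem:closure}, and the only mild point to be careful about is the closedness of the order relation used when passing to the limit. One can avoid even mentioning that by arguing coordinatewise: for each $R$, Lemma~\ref{lem:closure} produces $w\in\cM(\bal)$ with $w^{(R)} = \Wmap(\Phi(\bal))^{(R)}$, so any lower bound $z$ satisfies $z^{(R)} \le w^{(R)} = \Wmap(\Phi(\bal))^{(R)}$; letting $R\to\infty$ gives $z \le \Wmap(\Phi(\bal))$. This is the cleanest route and is the one I would write up. It is worth remarking that this theorem also retroactively justifies the claim made after Theorem~\ref{thm:restatement}: for rational $\bal$, the minimum periodic element $\MP(\bal) = \Wmap(\Phi(\bal))$ is not merely the smallest periodic element of $\cM(\bal)$ but its infimum, and since it lies in $\cM(\bal)$ (by Theorem~\ref{thm:restatement}) it is in fact the minimum.
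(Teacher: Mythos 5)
Your proposal is correct and follows exactly the route the paper takes: the theorem is stated there with an empty proof, being the immediate combination of Lemma~\ref{lem:lower-bound} (lower bound) and Lemma~\ref{lem:closure} (membership in the closure, hence greatest lower bound). Your coordinatewise argument for passing from the closure statement to the infimum is the right way to make explicit what the paper leaves implicit.
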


\begin{remarks}\mbox{}
\label{rmk:primed-and-almost-periodic}
\begin{enumerate}[a)]
\item The proofs of Lemmas~\ref{lem:sup-approx}
  and~\ref{lem:lower-bound} only depend on being able to find
  arbitrarily long initial subwords~$W$ of $w\in \cR(\bal)$ with
  $\brho(W)$ arbitrarily close to $\bal$. It follows that the results
  of this section remain true if elements of $\cR(\bal)$ are only
  required to have subsequential limits~$\bal$, which is a common
  approach in the definition of rotation sets. To be precise, for
  each~$\bal\in\Delta$ write
\[\cR'(\bal) = \left\{ w\in\Sigma\,:\, \brho\left(w^{(r_i)}\right) \to
\bal \text{ for some }r_i\to\infty\right\} \subset\Sigma,
\]
and $\cM'(\bal) = \cM\cap \cR'(\bal)$. Then $\Wmap(\Phi(\bal))$ is the
infimum of $\cM'(\bal)$, and $\Wmap(\Phi(\bal))\le \sup w$ for all~$w\in
\cR'(\bal)$. 

\item The infimax sequences $\Wmap(\Phi(\bal))$ are {\em almost
  periodic}: for every initial subword $W$ of $\Wmap(\Phi(\bal))$, there
  is some~$N$ with the property that every length~$N$ subword of
  $\Wmap(\Phi(\bal))$ contains~$W$. As a consequence, the orbit
  closure
\[\Sigma_\bal = \overline{\{\sigma^r(\Wmap(\Phi(\bal)))\,:\,r\ge
  0\}}\]
is a minimal $\sigma$-invariant set. 

To show almost periodicity, assume that $\bal\not\in\Q^k$ (since
otherwise $\Wmap(\Phi(\bal))$ is periodic and therefore almost
periodic), and write $\bn = \Phi(\bal)$. Pick~$r$ large enough that
$\Lambda_{\bn,r}(k)$ has initial subword~$W$. Now
$\Lambda_{n_{r+1}}\circ\Lambda_{n_{r+2}}\circ \cdots \circ
\Lambda_{n_{r+k-1}} (i)$ has initial letter~$k$ for all $i$ with $1\le
i\le k$ by Lemma~\ref{lem:eventually-k}, so that $U_i := \Lambda_{\bn,
  r+k-1}(i)$ has initial subword~$W$ for each~$i$. However
$\Wmap(\Phi(\bal)) = \Lambda_{\bn, r+k-1}(u)$ for some~$u\in\Sigma$,
and is therefore a concatenation of the words~$U_i$. This establishes
the result, with $N=2\,\max_{1\le i\le k}|U_i|$.
\end{enumerate}

\end{remarks}

\section{Minimax sequences}
\label{sec:minimax}
In this section we address the question of when the infimum
$\IM(\bal)$ of $\cM(\bal)$ is a minimum. Since the set of maximal
elements is a closed subset of~$\Sigma$, $\IM(\bal)$ is necessarily
maximal, and the issue is whether or not it belongs to $\cR(\bal)$. We
will show that this happens exactly when $\Phi^{-1}(\Phi(\bal)) =
  \{\bal\}$. We shall also show that this condition holds for some
  values of~$\bal$ (in fact we already know by
  Lemma~\ref{lem:itinerary0bar} and Theorem~\ref{thm:restatement} that
  it holds for $\bal$ rational), but fails when the itinerary
  $\Phi(\bal)$ grows too rapidly.

\begin{theorem}
\label{thm:infismin}
Let $\bal\in\Delta$. Then
\begin{enumerate}[a)]
\item $\Phi^{-1}(\Phi(\bal))$ is a $d$-dimensional simplex for some
  $d$ with $0\le d\le k-2$.
\item $\IM(\bal)$ is the minimum of $\cM(\bal)$ if and only if
  $\Phi^{-1}(\Phi(\bal))$ is a point.
\end{enumerate}
\end{theorem}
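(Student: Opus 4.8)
The plan is to analyse the fibre $\Phi^{-1}(\bn)$ for $\bn = \Phi(\bal)$ by pulling back through the continued fraction algorithm. Since $\Phi(\bbeta) = \bn$ means precisely that $K^r(\bbeta) \in \Delta_{n_r}$ for all $r$, and on $\Delta_{n_r}$ the map $K$ agrees with the projectivity $K_{n_r}$, we have
\[
\Phi^{-1}(\bn) = \bigcap_{r\ge 0} K_{n_0}^{-1}\circ K_{n_1}^{-1}\circ\cdots\circ K_{n_r}^{-1}(\Delta).
\]
Each set $X_r := \Lambda_{\bn,r}(\Delta) = K_{n_0}^{-1}\circ\cdots\circ K_{n_r}^{-1}(\Delta)$ is the image of the simplex $\Delta$ under a composition of projectivities, hence is itself a convex subset of $\Delta$ (indeed a projective simplex, the image of $\Delta$ under an element of $\mathrm{PGL}_{k+1}$), and the $X_r$ are nested and closed (their closures in $\R^k$, at least). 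So $\Phi^{-1}(\bn)$ is an intersection of a decreasing sequence of convex sets, hence convex; part a) will follow once I show its closure is a simplex of the stated dimension. First I would set up the projective picture carefully: lift each $K_n^{-1}$ to a matrix in $\operatorname{GL}_{k+1}(\R)$ acting on homogeneous coordinates, and track the images of the $k$ vertices of $\Delta$. The fibre over $\bn$ is then the nested intersection of the convex hulls of these moving vertex-sets; the dimension $d$ is determined by how many of the vertex-trajectories ``collapse together'' in the limit, and the bound $d\le k-2$ comes from the fact that the vertex $(0,\dots,0,1)$ (or rather its trajectory) is always ``attracting'', pinning down at least one direction — this is essentially the content of Lemma~\ref{lem:zero-component} and Lemma~\ref{lem:itinerary0bar}, which already handle the extreme case $\bn = W\overline{0}$ where the fibre is a single point.

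For part b), I would argue both directions using Theorem~\ref{thm:infimax}, namely $\IM(\bal) = \Wmap(\Phi(\bal))$, which depends only on $\bn = \Phi(\bal)$ and not on $\bal$ itself. \emph{If $\Phi^{-1}(\bn)$ is not a point}, pick $\bbeta \in \Phi^{-1}(\bn)$ with $\bbeta \ne \bal$. Then $\IM(\bbeta) = \Wmap(\bn) = \Wmap(\Phi(\bal)) = \IM(\bal)$, so the single sequence $\IM(\bal)$ would have to lie in both $\cR(\bal)$ and $\cR(\bbeta)$ if it were a minimax for $\bal$; but a sequence has at most one vector of asymptotic proportions, so $\IM(\bal) \in \cR(\bal)$ forces $\bal = \bbeta$, a contradiction — hence $\IM(\bal)$ is not a minimax. \emph{Conversely, if $\Phi^{-1}(\bn) = \{\bal\}$}, I need to show $\Wmap(\bn) \in \cR(\bal)$, i.e. that the asymptotic letter proportions of $\Wmap(\bn)$ are exactly $\bal$. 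The initial subwords $\Lambda_{\bn,r}(k)$ of $\Wmap(\bn)$ have proportion vectors $\brho(\Lambda_{\bn,r}(k))$, and because $\Lambda_n$ abelianises to $\KF_n^{-1}$ (compatibly with $K_n^{-1}$ via the projection $\pi$ of diagram~(\ref{eq:CD})), these proportion vectors are exactly the images under $\Lambda_{\bn,r}$ of the vertex $(0,\dots,0,1)$ — that is, one distinguished vertex-trajectory of the nested simplices $X_r$ from part a). The key point is that \emph{all} the vertex-trajectories converge to points of $\Phi^{-1}(\bn)$: if $\Phi^{-1}(\bn)$ is a single point $\bal$, they all converge to $\bal$, and in particular $\brho(\Lambda_{\bn,r}(k)) \to \bal$. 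One then has to upgrade this from convergence along the subsequence of lengths $|\Lambda_{\bn,r}(k)|$ to convergence of $\brho(\Wmap(\bn)^{(m)})$ along all $m$; this follows from the almost-periodicity / bounded-gap structure noted in Remark~\ref{rmk:primed-and-almost-periodic}b) together with the fact that the ``building block'' words $U_i = \Lambda_{\bn, r+k-1}(i)$ all have proportion vectors close to $\bal$ for large $r$ (again because every vertex-trajectory converges to $\bal$), so any long subword — being a concatenation of such blocks up to bounded error — has proportion vector close to $\bal$.

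The main obstacle I anticipate is the ``all vertex-trajectories converge to points of the fibre'' claim, and in particular controlling the \emph{diameters} of the nested projective simplices $X_r$: a priori a decreasing intersection of convex sets can fail to be a single point even when one wants it to be, and conversely I must rule out $X_\infty := \bigcap_r X_r$ being larger than $\Phi^{-1}(\bn)$ or having dimension exceeding $k-2$. This requires a quantitative estimate on how $K_n^{-1}$ (or its matrix) contracts in the Hilbert projective metric on $\Delta$, uniformly enough to conclude that the residual freedom is confined to a genuine sub-simplex whose dimension is exactly the number of asymptotically-coincident vertex-trajectories. I expect the cleanest route is to work throughout with the convex hulls of the $k$ vertex-images, show directly that $X_\infty = \Phi^{-1}(\bn)$ equals the convex hull of the (finitely many distinct) limit points of the vertex-trajectories, and read off both the simplex structure and the dimension bound from there; the bound $d\le k-2$ then reduces to showing that at least two vertex-trajectories always share a limit, or equivalently that the relevant matrices are never the identity on a codimension-$\le 1$ subspace — which is exactly where the special role of the coordinate $\alpha_k > 0$ and the structure of $\Lambda_n$ on the last two letters enters.
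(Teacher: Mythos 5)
Your argument for the ``only if'' direction of b) has a genuine logical gap, and it is precisely the direction where almost all of the work lies. From $\IM(\bbeta)=\Wmap(\bn)=\IM(\bal)$ for a second point $\bbeta\in\Phi^{-1}(\bn)$ you conclude that $\IM(\bal)$ ``would have to lie in both $\cR(\bal)$ and $\cR(\bbeta)$ if it were a minimax for $\bal$'' --- but that does not follow: being the minimum of $\cM(\bal)$ puts $\Wmap(\bn)$ in $\cR(\bal)$, and says nothing about membership in $\cR(\bbeta)$, since the infimax of $\cM(\bbeta)$ need not belong to $\cR(\bbeta)$ (that is the whole phenomenon under study). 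Your argument only shows that $\Wmap(\bn)$ can be a minimax for \emph{at most one} point of the fibre, which is exactly the observation recorded in Remark~\ref{rmk:infismin}~a); the theorem asserts it is a minimax for \emph{none} of them when the fibre is nontrivial. Equivalently, one must show that if $\Wmap(\bn)\in\cR(\bal)$ --- i.e.\ the $k$-th vertex trajectory $\brho(\Lambda_{\bn,r}(k))$ converges to $\bal$ --- then \emph{all} $k$ vertex trajectories converge to $\bal$, so the fibre collapses. This is nontrivial: it is proved via the explicit recursions expressing $\bal_r^{(k-1)}$ as convex combinations involving $\bal_r^{(k)}$, $\bal_{r-1}^{(k)}$ and $\bal_{r-1}^{(1)}$ (equations~(\ref{eq:eq1}) and~(\ref{eq:eq2})), a case split according to whether some residue class mod $k-1$ eventually carries only $n_r=0$, and in the degenerate case an induction on $k$ using the alphabet reduction of Remark~\ref{rmk:reduce-k}. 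None of this is present or replaceable by the symmetry argument you give. (Your ``if'' direction --- fibre a point implies all vertices converge to $\bal$, hence long prefixes of the concatenation $\Wmap(\bn)$ have proportions near $\bal$ --- is fine and matches the paper.)

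Part a) as proposed is also not closed up. You correctly reduce to a nested intersection of projective simplices, but then defer both the simplex structure and the bound $d\le k-2$ to ``obstacles''. For the simplex structure the paper simply cites Borovikov's theorem that a decreasing intersection of simplices is a simplex; your alternative --- proving the vertex trajectories converge and taking the convex hull of their limits --- is not free (a decreasing sequence of simplices need not have convergent vertices, and subsequential limits along different subsequences could a priori give a non-simplicial hull). For the dimension bound, your suggestion that ``at least two vertex-trajectories always share a limit'' is the conclusion restated, not a proof; the paper's argument is the slicker observation that by Lemma~\ref{lem:itinerary0bar} and Theorem~\ref{thm:restatement} a fibre contains at most one rational point, hence cannot have dimension $k-1$. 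Finally, your anticipated fix --- a uniform Hilbert-metric contraction estimate for the maps $K_n^{-1}$ --- cannot work in general: Theorem~\ref{thm:regular}~b) exhibits itineraries for which the fibre is a full $(k-2)$-simplex, so no uniform contraction is available, and contraction estimates enter only in the proof of the regularity criterion, not of Theorem~\ref{thm:infismin} itself.
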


\begin{proof}
Write $\bn = \Phi(\bal)$.
\begin{enumerate}[a)]
\item
The homeomorphisms $K_n^{-1}\colon\Delta\to\Delta_n$ of~(\ref{eq:KnI})
extend by the same formulae to homeomorphisms $K_n^{-1}\colon
\oD\to\oD_n\subseteq\oD$ of compact simplices. Define, for each
$r\in\N$, an embedding
\[\Upsilon_{\bn,r} = K_{n_0}^{-1}\circ K_{n_1}^{-1} \circ\cdots \circ
K_{n_r}^{-1}\colon \oD\to\oD.\] The images
$A_{\bn,r}=\Upsilon_{\bn,r}(\oD)$ of these embeddings form a
decreasing sequence of non-empty compact subsets of~$\oD$, which are
$(k-1)$-dimensional simplices since each $K_n^{-1}$ is a projectivity. 
Moreover $A_{\bn,r}\subset \Delta$ for all $r\ge k-1$, since
if~$\alpha_i>0$ for some $1\le i < k$ then
$\Upsilon_{\bn,k-1-i}(\bal)_{k-1}>0$, and therefore $\Upsilon_{\bn,
  k-i}(\bal)_k>0$: it follows that 
\[\Phi^{-1}(\bn) = \bigcap_{r\ge 0} A_{\bn,r}\]
is a non-empty compact convex subset of~$\Delta$, consisting of all
those points which have itinerary~$\bn$: this set is a simplex by
a theorem of Borovikov~\cite{Borovikov}, which states that the
intersection of a decreasing sequence of simplices is a simplex.
Since rational elements of~$\Delta$ do not share their itineraries
with any other points by Lemma~\ref{lem:itinerary0bar} and
Theorem~\ref{thm:restatement}, $\Phi^{-1}(\bn)$ cannot contain more
than one rational point, and hence has dimension at most $k-2$.

\item If $\bal\in\Q^k$ then the result follows by
  Lemma~\ref{lem:itinerary0bar} and
Theorem~\ref{thm:restatement}, so suppose that $\bal\not\in\Q^k$.
In particular $n_r>0$ for arbitrarily large~$r$, and hence
$|\Lambda_{\bn,r}(k)|\to\infty$ as $r\to\infty$.

Set
\[
\bal^{(i)}_r = \Upsilon_{\bn,r}(e^{(i)}) =
\brho(\Lambda_{\bn,r}(i)) \] for each $r\in\N$ and $1\le i\le k$, where
$e^{(i)}=(0,\ldots,0,1,0,\ldots,0)$ is the $i^\text{th}$ vertex
of~$\oD$. By compactness and convexity, $\Phi^{-1}(\Phi(\bal)) = \{\bal\}$ if and only if
$\bal^{(i)}_r \to \bal$ as $r\to\infty$ for each~$i$.

\medskip

Suppose first then that $\Phi^{-1}(\Phi(\bal)) = \{\bal\}$: we need to
show that $\Wmap(\bn)\in \cR(\bal)$.

\medskip

Let~$\epsilon>0$: we will show that $d_\infty(\brho(\Wmap(\bn)^{(m)}), \bal)
< \epsilon$ for all sufficiently large~$m$. To do this, let $R\ge 0$
be such that $d_\infty(\bal_R^{(i)}, \bal)<\epsilon/2$ for all $1\le i\le
k$, and write~$W_i$ for the word $\Lambda_{\bn,R}(i)$: thus
$d_\infty(\brho(W_i), \bal)<\epsilon/2$ for all~$i$. Let~$L=\max_{1\le i\le
  k}|W_i|$. 

Now
\[\Wmap(\bn) = \lim_{r\to\infty}\Lambda_{\bn,r}(\overline{k}) =
\lim_{r\to\infty, r> R}
\Lambda_{\bn,R}(\Lambda_{n_{R+1}}\circ\cdots\circ\Lambda_{n_r}(\overline{k}))\]
is a concatenation of the words~$W_i$. Therefore
$d_\infty(\brho(\Wmap(\bn)^{(m)}),\bal) < \epsilon$ whenever $m >
2L/\epsilon$, as required.

\medskip

Conversely, suppose that $\Wmap(\bn)\in \cR(\bal)$, so that
$\bal_r^{(k)}\to \bal$ as $r\to\infty$. We need to show that
$\Phi^{-1}(\Phi(\bal)) = \{\bal\}$, or equivalently that $\bal^{(i)}_r
\to \bal$ as $r\to\infty$ for each~$i$. The proof is by induction
on~$k\ge 2$, with the case~$k=2$ immediate since then
$\Phi^{-1}(\Phi(\bal)) = \{\bal\}$ for all~$\bal$ by~a).

We distinguish two cases. 

\begin{enumerate}[(i)]
\item Suppose first that for every~$i$ with $1\le i\le k-1$, there are
  arbitrarily large integers $r\equiv i-1\bmod k-1$ with the property
  that~$n_r>0$. 

Write $L_r^{(i)} = |\Lambda_{\bn,r}(i)|$ for each $r\in\N$ and $1\le
i\le k$, so that $ \bal^{(i)}_rL_r^{(i)}$ is an integer vector whose
entries give the number of occurences of each letter in
$\Lambda_{\bn,r}(i)$. Comparing the expressions $\Lambda_{\bn,r}(k-1)
= \Lambda_{\bn,r-1}(k\,1^{n_r+1})$ and $\Lambda_{\bn,r}(k) =
\Lambda_{\bn,r-1}(k\,1^{n_r})$ gives
\begin{equation}
\label{eq:eq1}
\bal^{(k-1)}_r = \frac{\bal_r^{(k)} L_r^{(k)} + \bal_{r-1}^{(1)} L_{r-1}^{(1)}}{L_r^{(k)}+L_{r-1}^{(1)}}.
\end{equation}
On the other hand, the first of these two expressions alone gives
\begin{equation}
\label{eq:eq2}
\bal^{(k-1)}_r = \frac{\bal_{r-1}^{(k)}L_{r-1}^{(k)} + (n_r+1)\bal_{r-1}^{(1)}L_{r-1}^{(1)}}{L_{r-1}^{(k)}+(n_r+1)L_{r-1}^{(1)}}.
\end{equation}
Solving~(\ref{eq:eq1}) and~(\ref{eq:eq2}) for  $\bal_r^{(k-1)}$  in terms of
$\bal_r^{(k)}$ and $\bal_{r-1}^{(k)}$ under the assumption $n_r>0$ gives
\[
\bal_r^{(k-1)} = 
\bal_r^{(k)} + \frac{L_{r-1}^{(k)}}{(n_r+1)L_r^{(k)} - L_{r-1}^{(k)}}
\left( \bal_r^{(k)} - \bal_{r-1}^{(k)} \right).
\]
Since $\bal_r^{(k)}\to\bal$ as $r\to\infty$, it follows that for
any~$\epsilon>0$ there is some~$R$ such that
$d_\infty(\bal_r^{(k)},\bal)<\epsilon$ for all $r\ge R$, and
$d_\infty(\bal_r^{(k-1)},\bal)<\epsilon$ for all $r\ge R$ with
$n_r>0$: for $L_r^{(k)} \ge L_{r-1}^{(k)}$ for all~$r$, so that
$L_{r-1}^{(k)} \le (n_r+1)L_r^{(k)} - L_{r-1}^{(k)}$ provided that
$n_r>0$.

\medskip

Now suppose that~$r\ge R$ with $n_r>0$.  Then the expressions
$\Lambda_{\bn,r}(i) = \Lambda_{\bn,r-1}(i+1)$ for $1\le i\le k-2$ give
$\bal_{r+k-2}^{(1)} = \bal_r^{(k-1)}$ so that, by~(\ref{eq:eq1}),
\[\bal_{r+k-1}^{(k-1)} = \frac
{\bal_{r+k-1}^{(k)} L_{r+k-1}^{(k)} + \bal_r^{(k-1)} L_r^{(k-1)} }
{L_{r+k-1}^{(k)} + L_r^{(k-1)}}
\]
and hence $\bal_{r+k-1}^{(k-1)}$ is a convex combination of points
within~$\epsilon$ of~$\bal$ and so is itself within~$\epsilon$
of~$\bal$. Inductively it follows that
$d_\infty(\bal_{r+s(k-1)}^{(k-1)},\bal)<\epsilon$ for all
$s\in\N$. Since, by the defining assumption of this case, there are
$r\ge R$ with $n_r>0$ in every congruence class modulo $k-1$, we have
$d_\infty(\bal_r^{(k-1)}, \bal) < \epsilon$ for all sufficiently
large~$r$. Therefore $\bal_r^{(k-1)}\to\bal$ as $r\to\infty$.

\medskip

 Since $\bal_r^{(i)} = \bal_{r+i-(k-1)}^{(k-1)}$ for all $1\le i\le
 k-2$ and $r\ge (k-1)-i$, it follows that \mbox{$\bal_r^{(i)}\to\bal$} as
 $r\to\infty$ for all~$i$ as required.

\medskip\medskip

\item Suppose then that there is some $i$ with $1\le i\le k-1$ such
  that~$n_r=0$ for all sufficiently large $r\equiv i-1 \bmod k-1$. We
  shall show that if $\Phi(\bbeta) = \bn$ then $\bbeta = \bal$. Now
  for each~$r\in\N$ we have that $\Phi(\bbeta) = \bn$ if and only if
  $\bbeta = K_{n_0}^{-1}\circ K_{n_1}^{-1}\circ\cdots\circ
  K_{n_{r-1}}^{-1}(\bbeta')$ for some~$\bbeta'$ with
  $\Phi(\bbeta')=n_rn_{r+1}\ldots$, so we can suppose without loss of
  generality that $n_r=0$ for every \mbox{$r\equiv 0 \bmod k-1$}, and
  hence by Lemma~\ref{lem:zero-component} that
  $\alpha_1=\beta_1=0$. By Remark~\ref{rmk:reduce-k} (and using the
  notation introduced there), $\bm:=\Phi(\pi_1(\bal)) =
  \Phi(\pi_1(\bbeta))$ is obtained from~$\bn$ by deleting the zero
  entries in positions which are multiples of~$k-1$. 

Write $\bal_r = \bal_r^{(k)}$ and $\bal_r' =
\Upsilon_{\bm,r}(0,0,\ldots,0,1) \in \Delta^{k-1}$ for
each~$r\in\N$. We shall show that
\begin{equation}
\label{eq:goal}
\bal'_{s(k-2)+i} = \pi_1(\bal_{s(k-1)+i+1})\quad\text{ whenever $0\le i\le
  k-3$ and $s\in\N$}.
\end{equation}
That is, the proportions of letters in each $\Lambda_{\bm,r}(k-1)$ (a
sequence over $k-1$ letters) is obtained from the proportions of
letters in $\Lambda_{\bn,r'}(k)$ by deleting an initial zero,
where~$r'$ is an appropriate index which increases with~$r$.  This
will establish the result. For then $\bal_r\to\bal$ implies
$\bal'_r\to\pi_1(\bal)$, or in other words
$S(\bm)\in\cR(\pi_1(\bal))$. Hence $\pi_1(\bal)=\pi_1(\bbeta)$ by the
inductive hypothesis, so that $\bal=\bbeta$ as required.

Observe first that when~$s=0$, equation~(\ref{eq:goal}) reads
\begin{equation}
\label{eq:simple-case}
K_{m_0}^{-1}\circ K_{m_1}^{-1}\circ\cdots\circ K_{m_i}^{-1}
(0,\ldots,0,1) = \pi_1(K_0^{-1}\circ K_{m_0}^{-1}\circ
K_{m_1}^{-1}\circ\cdots\circ K_{m_i}^{-1} (0,0,\ldots,0,1))
\end{equation}
for $0\le i\le k-3$, where on the left hand side
$(0,\ldots,0,1)\in\Delta^{k-1}$, and on the right hand side
$(0,0,\ldots,0,1)\in\Delta^k$. This is a straightforward consequence
of~(\ref{eq:KnI}): since \mbox{$i\le k-3$}, the $(k-1)^\text{th}$
component of $K_{m_0}^{-1}\circ
K_{m_1}^{-1}\circ\cdots\circ K_{m_i}^{-1} (0,0,\ldots,0,1)
\in\Delta^k$ is zero, so that applying $K_0^{-1}$ cyclically permutes
the first $k-1$ components.

Now it follows from~(\ref{eq:KnI}) that
 \[
      \pi_{i+1}^{-1}\circ K_m^{-1} = K_m^{-1}\circ\pi_i^{-1} \,\colon
     \Delta^{k-1} \to \Delta^{k,i+1}
  \]
for all $1\le i\le k-2$ and all $m\in\N$. Applying this for
$i=1,2,\ldots,k-2$ in succession gives
  \[ \pi_{k-1}^{-1}\circ
  K_{m_{k-2}}^{-1}\circ \cdots \circ K_{m_{1}}^{-1} =
  K_{m_{k-2}}^{-1}\circ \cdots \circ
  K_{m_{1}}^{-1}\circ\pi_1^{-1}\,\colon\Delta^{k-1}\to\Delta^{k,k-1}\]
  for all $m_1,\ldots,m_{k-2}$. Then, using again the observation that
  if $\bal\in\Delta^{k,k-1}$ then $K_0^{-1}$ cyclically permutes its
  components,
\begin{equation}
\label{eq:gen-case}
 K_{m_{k-2}}^{-1}\circ \cdots \circ
K_{m_1}^{-1} 
 =
\pi_1 \circ K_0^{-1}\circ K_{m_{k-2}}^{-1}\circ\cdots \circ
K_{m_1}^{-1}\circ\pi_1^{-1}
\,\colon \Delta^{k-1}\to\Delta^{k-1}
\end{equation}
for all $m_1,\ldots,m_{k-2}$. 

Applying~(\ref{eq:simple-case}) followed by $s$ applications
of~(\ref{eq:gen-case}) establishes~(\ref{eq:goal}) as required.
\end{enumerate}

\end{enumerate}

\end{proof}

\begin{remarks} \mbox{}
\label{rmk:infismin}
\begin{enumerate}[a)]
\item It is clear that if $\Phi^{-1}(\bn)$ is more than just one point, then
$\Wmap(\bn)$ can only be the minimum of $\cM(\bal)$ for at most one
$\bal\in\Phi^{-1}(\bn)$. The content of the final part of the proof is
that in fact it is not the minimum of any of the sets $\cM(\bal)$, and
indeed does not belong to $\cR(\bal)$ for any $\bal\in\Delta$.
\item Combining Theorem~\ref{thm:infismin}~b) with Lemma~17
  of~\cite{BT} and Lemma~4.2 of~\cite{Bruin} yields the following
  result: the action of the shift map on the orbit closure
  $\Sigma_{\bal}$ of Remarks~\ref{rmk:primed-and-almost-periodic}~b)
  is uniquely ergodic if and only if $\bal$ is regular.
\end{enumerate}
\end{remarks}

In view of this result, we make the following definitions:
\begin{defns}
$\bal\in\Delta$ is {\em regular} if
$\Phi^{-1}(\Phi(\bal)) = \{\bal\}$, and {\em exceptional} otherwise.
\end{defns}

When~$k=2$, every $\bal\in\Delta$ is regular by
Theorem~\ref{thm:infismin}~a). Therefore, in the
two letter case, there is an \mbox{$\bal$-minimax} sequence for
all~$\bal$: these are the well-known Sturmian
sequences~\cite{Sturmian1,Sturmian2}. When $k\ge 3$, we have already seen that $\bal$ is regular
if it is rational (i.e. if $\Phi(\bal)_r=0$ for all sufficiently
large~$r$). The following theorem states that the same is true when
$\Phi(\bal)_r>0$ grows at most quadratically with~$r$, and, on the other
hand, that if $\Phi(\bal)_r$ grows too fast then $\bal$ is exceptional.

\begin{theorem}
\label{thm:regular}
Let~$\bal\in\Delta$ and $\bn = \Phi(\bal)$.
\begin{enumerate}[a)]
\item If there is some~$C$ such that $0< n_r \le Cr^2$ for all~$r$, then
  $\bal$ is regular.
\item If $k\ge 3$ and $n_r \ge 2^{r+2}\prod_{i=0}^{r-1}(n_i+2)$ for
  all~$r\ge 1$, then $\Phi^{-1}(\Phi(\bal))$ is a simplex of
  dimension~$k-2$, so that $\bal$ is exceptional.
\end{enumerate}
\end{theorem}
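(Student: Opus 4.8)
The plan is to control the shape of the nested simplices $A_{\bn,r} = \Upsilon_{\bn,r}(\oD)$ whose intersection is $\Phi^{-1}(\bn)$, using the explicit formula~(\ref{eq:KnI}) for $K_n^{-1}$. Recall from the proof of Theorem~\ref{thm:infismin}~a) that $\Phi^{-1}(\bn) = \bigcap_r A_{\bn,r}$, and that the vertices of $A_{\bn,r}$ are the points $\bal_r^{(i)} = \brho(\Lambda_{\bn,r}(i))$ for $1\le i\le k$. By the compactness/convexity argument already given, $\bal$ is regular precisely when $\operatorname{diam} A_{\bn,r}\to 0$, equivalently when $\bal_r^{(i)}\to\bal_r^{(k)}$ for each $i$; and $\Phi^{-1}(\bn)$ has dimension $k-2$ precisely when the $k$ vertices $\bal_r^{(i)}$ stay affinely spread in a $(k-2)$-dimensional way — concretely, when the distances $d_\infty(\bal_r^{(i)},\bal_r^{(k)})$ are bounded below by a positive constant for $1\le i\le k-1$ and all large $r$ (the $k$ vertices of $A_{\bn,r}$ span a $(k-2)$-simplex, so one only needs the diameters not to shrink to zero). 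So both parts reduce to estimating how the vertex gaps $d_\infty(\bal_r^{(i)},\bal_r^{(k)})$ evolve under one application of a substitution $\Lambda_{n_r}$, i.e.\ under one $K_{n_r}^{-1}$ step.

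For part~(a), the key recursion is the one already isolated in the proof of Theorem~\ref{thm:infismin}, namely that comparing $\Lambda_{\bn,r}(k-1)=\Lambda_{\bn,r-1}(k\,1^{n_r+1})$ with $\Lambda_{\bn,r}(k)=\Lambda_{\bn,r-1}(k\,1^{n_r})$ gives
\[
\bal_r^{(k-1)} = \bal_r^{(k)} + \frac{L_{r-1}^{(k)}}{(n_r+1)L_r^{(k)} - L_{r-1}^{(k)}}\left(\bal_r^{(k)} - \bal_{r-1}^{(k)}\right),
\]
together with the shift relations $\bal_r^{(i)}=\bal_{r+i-(k-1)}^{(k-1)}$ and $L_r^{(k)} = L_{r-1}^{(k)} + (n_r{+}1)L_{r-1}^{(1)}$ etc. First I would show that $L_r^{(k)}/L_{r-1}^{(k)}$ is bounded below by a definite amount that grows with $n_r$ — in fact $L_r^{(k)}\ge (n_r{+}1)L_{r-1}^{(k)}/$(something) — so the coefficient $L_{r-1}^{(k)}\big/\bigl((n_r{+}1)L_r^{(k)}-L_{r-1}^{(k)}\bigr)$ is of order $1/n_r$; iterating over a block of $k-1$ consecutive indices then shows $d_\infty(\bal_r^{(k-1)},\bal_r^{(k)}) = O\bigl(\sum_{\text{block}} 1/n_s\bigr)$ relative to earlier gaps, so the gap contracts by a factor $\sim \prod 1/n_s$ over each window. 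The point of the hypothesis $n_r\le Cr^2$ is really in the \emph{opposite} direction: one needs the $n_r$ not to be so large that the length $L_r^{(k)}$ explodes and the telescoping of the $(\bal_r^{(k)}-\bal_{r-1}^{(k)})$ terms fails to converge; the condition $0<n_r\le Cr^2$ is exactly calibrated so that $\sum_r \bigl(\text{gap at step }r\bigr)$ is summable (the $r^2$ is the borderline for a sum like $\sum 1/(r^2\cdot\text{const})$-type control combined with the $L$-growth), giving $\operatorname{diam}A_{\bn,r}\to 0$. I would organise this as: (1) upper and lower bounds on the ratios $L_r^{(i)}/L_{r'}^{(j)}$ in terms of the $n_s$ in between; (2) a one-step contraction estimate for the vertex gap; (3) summation under the growth hypothesis.

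For part~(b) the strategy is the reverse: I would exhibit an explicit $(k-2)$-simplex $S\subset\oD$ (e.g.\ the face $\alpha_1=0$, or a slightly shrunk copy of it) and show that for $k\ge 3$ and $n_r$ growing at least as fast as $2^{r+2}\prod_{i=0}^{r-1}(n_i+2)$, the images $\Upsilon_{\bn,r}(\oD)$ all contain (a fixed positive fraction of) $S$, or more precisely that the vertex gaps $d_\infty(\bal_r^{(i)},\bal_r^{(k)})$ for $1\le i\le k-1$ do \emph{not} shrink to zero. From~(\ref{eq:KnI}), when $n$ is huge the map $K_n^{-1}$ is dominated by its first coordinate $\bigl((n{+}1)\alpha_{k-1}+n\alpha_k\bigr)/D$ with $D=(n{+}1)\alpha_{k-1}+n\alpha_k+1\approx (n{+}1)\alpha_{k-1}+n\alpha_k$, so $K_n^{-1}$ pushes almost all the mass into coordinate~$1$ and is nearly constant — \emph{unless} one restricts attention to inputs with $\alpha_{k-1},\alpha_k$ already tiny, which is exactly what happens after the previous $K^{-1}$ steps have themselves concentrated mass. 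The combinatorial factor $\prod_{i=0}^{r-1}(n_i+2)$ is the natural bound for how small the relevant coordinates of $\Upsilon_{\bn,r-1}(\oD)$ have become, and the extra $2^{r+2}$ gives room to spare, so that the distortion introduced at step $r$ is controlled by roughly $\bigl(\text{previous smallness}\bigr)\cdot n_r^{-1}\cdot(\text{const})$, which the hypothesis forces to be summable with small total. Concretely I would track the two quantities $\delta_r = $ (distance from $A_{\bn,r}$ to the target face $S$) and $\mu_r = $ (minimal coordinate among coordinates $k-1,k$ of vertices of $A_{\bn,r}$), prove coupled recursions $\mu_r \lesssim \mu_{r-1}/n_r$ and $\delta_r \le \delta_{r-1} + C\,2^{r}\mu_{r-1}/n_r$ (roughly), and conclude from $n_r\ge 2^{r+2}\prod(n_i+2)$ that $\sum_r 2^r\mu_{r-1}/n_r < 1/2$, so $A_{\bn,r}$ never escapes a neighbourhood of the full $(k-2)$-dimensional face $S$; hence $\Phi^{-1}(\bn) = \bigcap A_{\bn,r}$ has dimension $k-2$.

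The main obstacle will be part~(b): setting up the right pair of inductive quantities ($\mu_r$, $\delta_r$) and verifying that the one-step distortion of $K_{n_r}^{-1}$ restricted to the already-concentrated region $A_{\bn,r-1}$ really is bounded by a term of the stated summable size. This requires a careful, coordinate-by-coordinate estimate of~(\ref{eq:KnI}) on inputs whose last two coordinates are $O(\mu_{r-1})$ — the denominator $D$ must be shown to be $\Theta(1)$ there (so that it does \emph{not} blow up the first coordinate), and the resulting simplex must be shown to remain within distance $\delta_{r-1}+ (\text{small})$ of the target face. Part~(a), by contrast, is a more routine telescoping/summation once the length-ratio bounds of step~(1) are in place, since the vertex-gap recursion is already derived in the proof of Theorem~\ref{thm:infismin}.
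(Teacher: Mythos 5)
Your part~(b) is essentially the paper's own argument: one tracks the vertices $\bal_r^{(i)}=\brho(\Lambda_{\bn,r}(i))$ of $A_{\bn,r}$, notes that $\bal_r^{(i)}=\bal_{r-1}^{(i+1)}$ for $1\le i\le k-2$ while $\bal_r^{(k-1)}$ lies within $\prod_{i=0}^{r-1}(n_i+2)/n_r\le 1/2^{r+2}$ of $\bal_{r-1}^{(1)}$, so that the first $k-1$ vertices undergo an approximate cyclic permutation and their configuration moves by a summably small total amount. One caveat: for $k\ge 4$ your parenthetical reduction is wrong --- having the pairwise distances among $k-1$ points bounded below by a constant does \emph{not} prevent them from degenerating affinely, so ``diameters not shrinking to zero'' does not yield a $(k-2)$-dimensional limit. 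The paper closes this by observing that the (projected) vertex set stays within Hausdorff distance $1/4$ of the affinely independent initial configuration $V_0$; some such argument is needed in your scheme as well, but it is a fixable detail.

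Part~(a), however, contains a genuine gap. The claimed one-step estimate --- that the vertex gap ``contracts by a factor $\sim\prod 1/n_s$ over each window'' --- is false, and if it were true it would show that every $\bal$ with $n_r\to\infty$ is regular, contradicting part~(b). The coefficient $L_{r-1}^{(k)}\big/\bigl((n_r+1)L_r^{(k)}-L_{r-1}^{(k)}\bigr)=O(1/n_r)$ being small only says that the two \emph{new} vertices $\bal_r^{(k-1)}$ and $\bal_r^{(k)}$ are close to each other; meanwhile the other $k-2$ vertices of $A_{\bn,r}$ are exactly the old vertices $\bal_{r-1}^{(2)},\ldots,\bal_{r-1}^{(k-1)}$ and have not moved at all, and for large $n_r$ the two new vertices both collapse onto the old vertex $\bal_{r-1}^{(1)}$ --- which is precisely the \emph{non}-contraction mechanism exploited in part~(b). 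So the diameter of $A_{\bn,r}$ need not decrease at all in one step, and your proposed telescoping has nothing to sum. Relatedly, the role of the hypothesis $0<n_r\le Cr^2$ is the reverse of what you describe: larger $n_r$ gives \emph{weaker} contraction, and the upper bound is what guarantees enough of it. The paper's route is different in kind: $K_n^{-1}$ is the projective action of a non-negative matrix $A(n)$, any product of $2k-3$ of these with all $n_r>0$ is strictly positive with Birkhoff ratio $d\le R(n_{k-1}+\cdots+n_{2k-4})$, and Birkhoff's theorem gives contraction of Hilbert's projective metric by $\tau(d)=(\sqrt{d}-1)/(\sqrt{d}+1)$; the bound $n_r\le Cr^2$ makes $\sum_r(1/\tau(d_r)-1)\gtrsim\sum_r 1/r$ diverge, hence $\prod_r\tau(d_r)=0$ and the nested simplices shrink to a point. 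Some input of this type --- a projective-metric contraction estimate over blocks, not a single vertex-gap recursion --- is what your sketch is missing.
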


\begin{proof}
We use the notation of the proof of Theorem~\ref{thm:infismin}.
\begin{enumerate}[a)]
\item 
We will use a theorem of Birkhoff~\cite{Birkhoff,Carroll} to show that
$\Delta$ is contracted by the embeddings~$\Upsilon_{\bn,r}$, and we
start by giving some necessary definitions and stating this
theorem. Let~$A = (a_{ij})$ be a $k$ by $k$ matrix with strictly
positive entries, and $f_A$ be its projective action on~$\oD$:
that is, $f_A\colon\oD\to\oD$ is defined by
\[f_A(\bal) = \frac{A\,\bal}{||A\,\bal||_1}.\]
Define also
\begin{equation}
\label{eq:dA}
d(A) = \max_{1\le i,j,l,m\le k}\,\, \frac{a_{il}a_{jm}}{a_{im}a_{jl}}
\ge 1
\end{equation}
 (that is, $d(A)$ is the largest number that can be obtained by
choosing four elements of~$A$ arranged in a rectangle, and dividing the
product of the two elements on one diagonal by the product of the two
elements on the other). $d(A)$ is stricly greater than one unless~$A$
has rank~$1$.

Let $\tau\colon[1,\infty)\to[0,1)$ be the strictly increasing function
    $\tau(d) = (\sqrt{d}-1)/(\sqrt{d}+1)$. Write~$\mathring{\Delta}$
    for the simplex~$\Delta$ less its faces, and
    let~$\delta\colon\mathring{\Delta}\times\mathring{\Delta}\to \R_{\ge0}$ be
    Hilbert's projective metric (which generates the
    Euclidean topology),
\[
\delta(\bal,\bbeta) = \log \max_{1\le i,j\le k}\,\,\frac{\alpha_i\,\beta_j}{\alpha_j\,\beta_i}.
\]

Birkhoff's theorem states that, provided $d(A)>1$, the restriction
of~$f_A$ to $\mathring{\Delta}$ contracts the metric~$\delta$ by $\tau(d(A))$:
that is, $\delta(f_A(\bal), f_A(\bbeta)) \le \tau(d(A))\, \delta(\bal,
\bbeta)$ for all $\bal, \bbeta \in \mathring{\Delta}$.

Now let~$A(n)$ be the $k$ by $k$ matrix with $A(n)_{1,k-1}=n+1$,
$A(n)_{1,k}=n$, $A(n)_{i, i-1} = 1$ for $2\le i\le k$, $A(n)_{k,k}=1$,
and all other entries zero: as an example, when~$k=5$, 
\[
A(n) = \left(\begin{array}{ccccc}
 0 & 0 & 0 & n+1 & n\\
 1 & 0 & 0 & 0 & 0\\
 0 & 1 & 0 & 0 & 0\\
 0 & 0 & 1 & 0 & 0\\
 0 & 0 & 0 & 1 & 1
\end{array}\right).
\]
By~(\ref{eq:KnI}), we have $f_{A(n)} =
K_n^{-1}\colon\oD\to\oD$. Although~$A(n)$ has some zero entries, we
shall see that any product of $2k-3$ such matrices $A(n_r)$ with each
$n_r>0$ is strictly positive.

Write $A(n_0,\ldots,n_r) = \prod_{s=0}^r A(n_s)$. By considering the
action of $\Lambda_{n_0}\circ\cdots\circ\Lambda_{n_{k-3}}$ on each of
the letters $1,\ldots,k$, it can be seen that $A(n_0,\ldots,n_{k-3})$
has row~$i$, for $1\le i\le k-2$, consisting of $i$ zeros followed by
$n_{i-1}+1$ and then $n_{i-1}$ in the other columns; row $k-1$ has a $1$ in column
$1$ and zeros in the other columns; and row $k$ has a zero in column
$1$ and $1$s in the other columns. Similarly $A(n_{k-2},\ldots,
n_{2k-4}$) has row~$i$, for $1\le i\le k-1$, consisting of $i-1$ zeros
followed by $n_{k-3+i}+1$ on the diagonal and $n_{k-3+i}$ in the other
columns; while row~$k$ has $1$ in every column. As an example, when
$k=5$, these two matrices are given by
\[
\left(\begin{array}{ccccc}
0 & n_0 + 1 & n_0 & n_0 & n_0 \\
0 & 0 & n_1+1 & n_1 & n_1\\
0 & 0 & 0 & n_2+1 & n_2\\
1 & 0 & 0 & 0 & 0\\
0 & 1 & 1 & 1 & 1
\end{array}\right) \quad\text{and}\qquad
\left(\begin{array}{ccccc}
n_3+1 & n_3 & n_3 & n_3 & n_3 \\
0 & n_4+1 & n_4 & n_4 & n_4 \\
0 & 0 & n_5+1 & n_5 & n_5\\
0 & 0 & 0 & n_6+1 & n_6\\
1 & 1 & 1 & 1 & 1
\end{array}\right).
\]

The product $A(n_0,\ldots,n_{2k-4}) = (a_{ij})_{1\le i,j\le k}$ of
these matrices is therefore strictly positive when each~$n_r>0$, with
each~$a_{ij}$ a polynomial of degree at most~2 in
$n_0,\ldots,n_{2k-4}$. We shall show that, for each~$1\le i\le k$ and
each $1\le l < m \le k$, the quotient $a_{il}/a_{im}$ is bounded above
by 2, while the quotient $a_{im}/a_{il}$ is bounded above by a linear
function of $n_{k-1}, \ldots, n_{2k-4}$. As a consequence,
since~(\ref{eq:dA}) says that $d(A)$ is the product of one quotient of
the first type and one of the second, there is some~$R$, depending
only on~$k$, such that
\begin{equation}
\label{eq:bound}
d(A(n_0,\ldots,n_{2k-4})) \le R(n_{k-1}+\cdots + n_{2k-4})
\end{equation}
provided that each~$n_r>0$.

The claim is straightforward when~$i=k-1$, in which case $a_{il}$ is
either $n_{k-2}$ or $n_{k-2}+1$; and when~$i=k$, in which case
$a_{i1}=1$, $a_{il} = 2+\sum_{j=k-1}^{k-3+l}n_j$ for $2\le l\le k-1$,
and $a_{ik} = a_{i,k-1}-1$. When $1\le i\le k-2$, the explicit
descriptions of the elements of $A(n_0,\ldots,n_{k-3})$ and
$A(n_{k-2},\ldots,n_{2k-4})$ give
\[
a_{il} = 
\begin{cases}
n_{i-1} & \text{if }1\le l\le i,\\
n_{i-1}(n_{k+i-2}+2) + (n_{k+i-2}+1) & \text{if }l = i+1,\\
n_{i-1}\left(2+\sum_{j=k+i-2}^{k+l-3}n_j\right) + n_{k+i-2} & \text{if
} i+2 \le l < k,\\
n_{i-1}\left(1+\sum_{j=k+i-2}^{2k-4}n_j\right) + n_{k+i-2} & \text{if
} l=k,\\
\end{cases}
\]
from which the claim follows.

Now let~$\bal\in\Delta$, and suppose that there is some~$C$ such
that~$\bn=\Phi(\bal)$ satisfies $0<n_r\le Cr^2$ for all~$r$. For
each~$r\ge 0$ we have
\begin{multline*}
\Upsilon_{\bn, (r+1)(2k-3)-1}(\oD) = (K_{n_0}^{-1}\circ \cdots \circ
K_{n_{2k-4}}^{-1})\circ(K_{n_{2k-3}}^{-1}\circ \cdots \circ
K_{n_{4k-7}}^{-1}) \circ\cdots \circ \\ (K_{n_{r(2k-3)}}^{-1} \circ \cdots
\circ K_{n_{(r+1)(2k-3)-1}}^{-1})(\oD)
\end{multline*}
 Since $(K_{n_{r(2k-3)}}^{-1} \circ \cdots \circ
 K_{n_{(r+1)(2k-3)-1}}^{-1})(\oD) \subset\mathring{\Delta}$ (because
 the product of $2k-3$ matrices $A(n)$ is strictly positive), it is
 enough to show that
\[\prod_{r=0}^\infty \tau(d(A(n_{r(2k-3)}, \ldots, n_{(r+1)(2k-3)-1}))) =
0.\]

By~(\ref{eq:bound}) and $n_r\le Cr^2$, there is some~$Q$ depending
only on~$C$ and~$k$ such that $d_r := d(A(n_{r(2k-3)}, \ldots,
n_{(r+1)(2k-3)-1})) \le (Qr)^2$ for all $r\ge1$, so that $\tau(d_r)
\le (Qr-1)/(Qr+1)$. Recall that if $0<a_r\le 1$ for all~$r$
then~$\prod_{r=0}^\infty a_r = 0$ if and only if $\sum_{r=0}^\infty
\left(\frac{1}{a_r}-1\right)$ diverges. Since
\[\frac{1}{\tau(d_r)} - 1 \ge \frac{2}{Qr-1},\]
the result follows.

\medskip

\item Set
\[\delta_r = \min_{1\le i<j\le k-1} d_\infty(\bal^{(i)}_r,
\bal^{(j)}_r)\] for each~$r\ge 0$, the smallest distance between a
pair of vertices in the simplex $A_{\bn,r}$ excluding the vertex
$\Upsilon_{\bn,r}(0,0,\ldots,0,1)$. We shall show that $\delta_0=1$
and $\delta_r \ge \delta_{r-1}-1/2^{r+2}$ for each $r\ge 1$, so that
$\delta_r>3/4$ for all~$r$. It is therefore not possible for all
of the $\bal^{(j)}_r$ to converge to the same point.

That $\delta_0=1$ is straightforward, since $\bal^{(i)}_0 =
K_{n_0}^{-1}(e^{(i)})$ is equal to $e^{(i+1)}$ if $1\le i\le k-2$, and
to $((n_0+1)e^{(1)} + e^{(k)})/(n_0+2)$ if $i = k-1$.

Now let~$r\ge 1$. If $1\le i\le k-2$ then we have
$\Lambda_{\bn,r}(i) = \Lambda_{\bn,r-1}(\Lambda_{n_r}(i)) =
\Lambda_{\bn,r-1}(i+1)$, so that
\[\bal_r^{(i)} = \bal_{r-1}^{(i+1)} \qquad\text{for }1\le i\le k-2.\]

Consider then the case $i=k-1$. By~(\ref{eq:eq2})
\[\bal_r^{(k-1)} = \frac
{\bal_{r-1}^{(k)}L_{r-1}^{(k)} \,+\, (n_r+1)\bal_{r-1}^{(1)}L_{r-1}^{(1)}}
{L_{r-1}^{(k)} \,+\, (n_r+1)L_{r-1}^{(1)}},
\]
so that
\[\bal_r^{(k-1)} - \bal_{r-1}^{(1)} = 
\frac
{L_{r-1}^{(k)}
\left(\bal_{r-1}^{(k)} - \bal_{r-1}^{(1)} \right)}
{(n_r+1)L_{r-1}^{(1)} + L_{r-1}^{(k)}},
\]
in which each component has absolute value bounded above
by $\prod_{i=0}^{r-1}(n_i+2)/n_r \le 1/2^{r+2}$, using
$L_{r-1}^{(k)}\le\prod_{i=0}^{r-1}(n_i+2)$ in the numerator and
$L_{r-1}^{(i)}\ge 1$ in the denominator.

Therefore $d_\infty(\bal_r^{(k-1)}, \bal_{r-1}^{(1)})\le 1/2^{r+2}$, and
we saw in the first part of the proof that $d_\infty(\bal_r^{(i)},
\bal_{r-1}^{(i+1)})=0$ for $1\le i\le k-2$. This gives $\delta_r
\ge \delta_{r-1} - 1/2^{r+2}$ as required.

To show that $\Phi^{-1}(\Phi(\bal))$ is a simplex of
dimension~$k-2$, let $\pi\colon\R^k\to\R^{k-1}$ be projection onto the
first~$k-1$ coordinates. Then 
\[V_0:= \{\pi(\bal_0^{(i)})\,:\,1\le i\le k-1\} = \{(n_0+1)\pi(e^{(1)})/(n_0+2),
\pi(e^{(2)}), \pi(e^{(3)}), \ldots, \pi(e^{(k-1)})\},\]
and $(n_0+1)/(n_0+2) \ge 1/2$.
Now for each~$r\ge 1$, the set $V_r := \{\pi(\bal_r^{(i)})\,:\,1\le
i\le k-1\}$ is within $d_\infty$-Hausdorff distance~$1/4$ of~$V_0$,
and hence the same is true for the limit $V_\infty$. The $k-1$ points
of $V_\infty$ therefore span a simplex of dimension~$k-2$, which is
the $\pi$-image of a simplex of dimension~$k-2$ contained in
$\Phi^{-1}(\Phi(\bal))$. 

\end{enumerate}

\end{proof}

\begin{example}
The conditions of Theorem~\ref{thm:regular}a) are obviously satisfied
when $\bn = \Phi(\bal) = \overline{n_0\ldots n_{r-1}}$ is periodic
without any zero entries: by the theorem, such a sequence is the
itinerary of a unique periodic point of~$K$. The corresponding minimax
sequence $\IM(\bal)$ is the fixed point of the substitution
$\Lambda_{n_0}\circ\cdots\circ\Lambda_{n_{r-1}}$, and therefore
generates a substitution minimal set~\cite{SMS}.

The simplest such example is when~$k=3$ and~$\Phi(\bal) = 
\overline{1}$. The minimum of $\cM(\bal)$ is then given by
\[\lim_{r\to\infty}\Lambda_1^r(3) =
3123113122312311311312311312231223123113122312311311312311311312
\ldots\,,\] 
the unique fixed point of~$\Lambda_1$. In this example
$\bal$ is the unique fixed point of~$K_1$ or, equivalently, the
(suitably normalized)  strictly positive eigenvector of the matrix
\[A(1) = \left(\begin{array}{ccc}
  0 & 2 & 1\\
 1 & 0 & 0 \\
 0 & 1 & 1 
\end{array}\right)
\]
from the proof of Theorem~\ref{thm:regular}a). Notice that the minimum
of $\cM(\bal)$ is not of Arnoux-Rauzy type~\cite{GBA}: for example, it
has six factors of length two, and the substitution~$\Lambda_1$ is not
Pisot. 
\end{example}

\begin{remarks} \mbox{}
\label{rmk:regular}
\begin{enumerate}[a)]
\item Theorems~11 and~12 of~\cite{BT} improve substantially on
  Theorem~\ref{thm:regular} in the case~$k=3$ (only). After
  translation to the notation used here, they read:

{\sc Theorem (Bruin and Troubetzkoy)}\,\,
\em
Let~$k=3$, and let $\bal\in\Delta$ and $\bn=\Phi(\bal)$.
\begin{itemize}
\item For each~$r\ge 0$, let $L_{2r}=\min\{s\ge
  1\,:\,n_{2r+s}\not=0\}$. If either
\begin{eqnarray*}
\sum_r \frac{n_{2r}}{n_{2r}+1} \sqrt{\frac{1}{(n_{2r-1}+1)L_{2r}}} &=&
\infty, \quad\text{ or}\\
\prod_r \frac{n_{2r}+1}{n_{2r-1} + 1 + \frac{1}{L_{2r}}} &=& 0,
\end{eqnarray*}
or if either condition holds for the shift $\sigma(\bn) = \Phi(K(\bal))$ of $\bn$,
then $\bal$ is regular.
\item If there is some~$\lambda>1$ such that $n_{r+1}\ge \lambda n_r$
  for all sufficiently large~$r$, then $\bal$ is exceptional.
\end{itemize}
\rm This result gives rise to a striking pair of examples: on the one
hand, if $\Phi(\bal)_r = 2^r$ for all~$r$ then $\bal$ is exceptional
by the second statement; while on the other hand, if
$\Phi(\bal)_r=2^r$ when $r$ is even and $\Phi(\bal)_r=3^r$ when $r$ is
odd, then $\bal$ is regular by the second condition in the former
statement.

\item The result of Theorem~\ref{thm:regular}a) clearly extends to the
  case where finitely many of the~$n_r$ are zero. When $n_r=0$ for
  arbitrarily large~$r$ the situation is more complicated, as the
  product of $2k-3$ successive matrices need not be strictly
  positive. This can not always be remedied by grouping the sequence
  of matrices more judiciously: in the case where $n_{r(k-1)} = 0$ for
  all~$r$, no product $A(n_s, n_{s+1}, \ldots, n_{s+t})$ is strictly
  positive. This case arises when considering the itinerary of an
  element~$\bal$ of~$\Delta$ which has some zero coordinates
  (Lemma~\ref{lem:zero-component}), and can be treated by induction
  on~$k$.
\item The fact that the bound of~(\ref{eq:bound}) depends only on
  $k-2$ of the $2k-3$ variables means that it is sufficient for
  regularity to have control over the~$n_r$ along an appropriate
  subsequence. 
\item The growth condition in Theorem~\ref{thm:regular}b) --- which,
  for example, is satisfied by $n_r = 2^{2^{3r}}$ --- could easily be
  improved by improving the bounds on $L_{r-1}^{(k)}$ and
  $L_{r-1}^{(i)}$ in the penultimate paragraph of the proof:
  the point here is simply to show that exceptional~$\bal$ exist. In
  fact, numerical experiments suggest that, when~$k=3$,
  $\Phi^{-1}(\bn)$ is a non-trivial interval when $n_r=r^3$, so that
  even the $k=3$ results of Bruin and Troubetzkoy are far from
  optimal. 
\end{enumerate}
\end{remarks}

We finish by showing -- closely following the proof of Corollary~13
of~\cite{BT} -- that a generic element $\bn$ of $\N^\N$ is the
itinerary of only one point. We use the following lemma.

\begin{lemma}
\label{lem:non-expanding}
For all $n\ge 0$, the map
$K_n^{-1}\colon\mathring{\Delta}\to\mathring{\Delta}$ does not expand
the Hilbert metric: that is, $\delta(K_n^{-1}(\bal), K_n^{-1}(\bbeta))
\le \delta (\bal, \bbeta)$ for all $\bal, \bbeta\in\mathring{\Delta}$.
\end{lemma}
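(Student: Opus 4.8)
The plan is to exploit the fact that $K_n^{-1}$ is a projectivity --- the projective action of the non-negative matrix $A(n)$ described in the proof of Theorem~\ref{thm:regular}~a) --- together with the classical fact that \emph{any} non-negative matrix acts non-expansively on Hilbert's projective metric, with strict contraction occurring only when the matrix is strictly positive (Birkhoff's theorem, which the paper has already invoked). Concretely, I would first recall that $f_{A(n)} = K_n^{-1}$ on $\oD$, and observe that the only subtlety is that $A(n)$ has zero entries, so Birkhoff's contraction factor $\tau(d(A(n)))$ may be infinite or undefined; nevertheless the weak inequality $\delta(f_A(\bal), f_A(\bbeta)) \le \delta(\bal,\bbeta)$ holds for every matrix $A$ with non-negative entries that does not annihilate $\mathring{\Delta}$.

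The cleanest route is a direct verification rather than a black-box citation. Recalling
\[
\delta(\bal,\bbeta) = \log\max_{1\le i,j\le k}\frac{\alpha_i\beta_j}{\alpha_j\beta_i},
\]
write $\bal' = A\bal$ and $\bbeta' = A\bbeta$ (before normalization, which does not affect $\delta$). Fix indices $p,q$. Then
\[
\frac{\alpha'_p\beta'_q}{\alpha'_q\beta'_p}
= \frac{\left(\sum_l a_{pl}\alpha_l\right)\left(\sum_m a_{qm}\beta_m\right)}
       {\left(\sum_m a_{qm}\alpha_m\right)\left(\sum_l a_{pl}\beta_l\right)}.
\]
The numerator is $\sum_{l,m} a_{pl}a_{qm}\alpha_l\beta_m$ and the denominator is $\sum_{l,m} a_{pl}a_{qm}\alpha_m\beta_l = \sum_{l,m} a_{pl}a_{qm}\alpha_l\beta_m \cdot \frac{\alpha_m\beta_l}{\alpha_l\beta_m}$; so the ratio is a weighted average (with non-negative weights $a_{pl}a_{qm}\alpha_l\beta_m$, not all zero since $\bal'_q, \bbeta'_p > 0$) of the quantities $\alpha_l\beta_m/(\alpha_m\beta_l)$ over pairs $(l,m)$ --- more precisely, a ratio of the form $(\sum w_{lm}) / (\sum w_{lm} t_{lm})$ with $t_{lm} = \alpha_m\beta_l/(\alpha_l\beta_m)$, which is bounded above by $\max_{l,m} (1/t_{lm}) = \max_{l,m}\alpha_l\beta_m/(\alpha_m\beta_l) = \exp\delta(\bal,\bbeta)$. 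Taking the maximum over $p,q$ and then the logarithm gives $\delta(A\bal, A\bbeta) \le \delta(\bal,\bbeta)$, and applying this with $A = A(n)$ and using $f_{A(n)} = K_n^{-1}$ on $\mathring{\Delta}$ yields the lemma.

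The main thing to be careful about --- the only real obstacle --- is the degeneracy caused by the zero entries of $A(n)$: one must check that $A(n)$ maps $\mathring{\Delta}$ into $\mathring{\Delta}$, so that $\delta(K_n^{-1}(\bal), K_n^{-1}(\bbeta))$ is actually defined, and that in the averaging argument the weights $a_{pl}a_{qm}\alpha_l\beta_m$ do not all vanish for the relevant $p,q$. Both follow from the explicit form of $A(n)$: each row has a positive entry (row~$1$ has $n+1$ in column $k-1$, rows $2,\dots,k-1$ have a $1$ just below the diagonal, and row~$k$ has $1$'s in columns $k-1$ and $k$), so $A(n)\bal$ has all coordinates positive whenever $\bal\in\mathring\Delta$; and the relevant weights are non-zero precisely because $\bal'_q$ and $\bbeta'_p$ are positive. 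Everything else is the routine weighted-average estimate above, which is exactly the standard proof that non-negative matrices are non-expansive for Hilbert's metric.
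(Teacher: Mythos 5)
Your proof is correct, and it takes a genuinely different (more general) route from the paper's. The paper verifies the inequality directly from the explicit formula for $K_n^{-1}$, running through four cases according to whether the indices $i,j$ touch positions $1$ and $k$ (the only coordinates of $K_n^{-1}(\bal)$ that are genuine sums of several $\alpha_\ell$'s), and in each case invoking the mediant inequality that $(a+b)/(c+d)$ lies between $a/c$ and $b/d$. You instead prove the general statement that the projective action of any non-negative matrix with no zero row is non-expansive for Hilbert's metric, via the weighted-average identity for $\alpha'_p\beta'_q/(\alpha'_q\beta'_p)$, and then specialize to $A(n)$. The two arguments rest on the same elementary principle --- your weighted average with weights $a_{pl}a_{qm}\alpha_l\beta_m$ reduces, for the matrices $A(n)$, to exactly the two-term mediants the paper manipulates --- but your packaging is cleaner: it eliminates the case analysis and makes transparent that nothing about the specific form of $A(n)$ matters beyond the absence of zero rows, which you correctly verify (including the point that for $n=0$ the $(1,k)$ entry vanishes but the $(1,k-1)$ entry $n+1$ is still positive, so $\mathring{\Delta}$ is indeed mapped into $\mathring{\Delta}$). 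The only microscopic imprecision is that the maximum of $\alpha_l\beta_m/(\alpha_m\beta_l)$ should formally be taken over pairs $(l,m)$ with non-vanishing weight; since the maximum over all pairs is at least as large and still equals $\exp\delta(\bal,\bbeta)$, the bound goes through unchanged.
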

\begin{proof}
Let $\bal,\bbeta\in\mathring{\Delta}$, and write
\[\bal' := K_n^{-1}(\bal) = C\,\,((n+1)\alpha_{k-1}+ n\alpha_k,\,
\alpha_1,\, \alpha_2,\,
\ldots,\,\alpha_{k-2},\,\alpha_{k-1}+\alpha_k),\] where $C = C(\bal)$
is a constant, and similarly $\bbeta' := K_n^{-1}(\bbeta)$. To prove
the lemma, we need to show that whenever $1\le i<j\le k$, there exist
$I$ and $J$ between $1$ and $k$ with
\[\frac{\alpha_i'\,\beta_j'}{\beta_i'\,\alpha_j'} \,\le\,
\frac{\alpha_I\,\beta_J}{\beta_I\,\alpha_J}.\] 
This can be established straightforwardly by cases, using the
elementary fact that if $a,b,c,d$ are positive reals then
$(a+b)/(c+d)$ lies between $a/c$ and $b/d$.
\begin{itemize}
\item If $i$ and $j$ are both between $2$ and $k-1$ then
  $\displaystyle{\frac{\alpha_i'\beta_j'}{\beta_i'\alpha_j'} =
  \frac{\alpha_{i-1}\beta_{j-1}}{\beta_{i-1}\alpha_{j-1}}}$. 
\item If $i=1$ and $j<k$ then $\displaystyle{
  \frac{((n+1)\alpha_{k-1} + n\alpha_k) \beta_{j-1}}{
    ((n+1)\beta_{k-1} + n\beta_k)\alpha_{j-1}}}$ lies between
  $\displaystyle{\frac{(n+1)\alpha_{k-1}\beta_{j-1}}{(n+1)\beta_{k-1}\alpha_{j-1}}
  = \frac{\alpha_{k-1}\beta_{j-1}}{\beta_{k-1}\alpha_{j-1}}}$ and
  $\displaystyle{\frac{n\alpha_k\beta_{j-1}}{n\beta_k\alpha_{j-1}} =
  \frac{\alpha_k\beta_{j-1}}{\beta_k\alpha_{j-1}}}$.
\item If $i>1$ and $j=k$ then the argument is identical, except that
  the factors $n+1$ and $n$ are omitted.
\item If $i=1$ and $j=k$ then $\displaystyle{\frac{((n+1)\alpha_{k-1}
    + n\alpha_k) (\beta_{k-1}+\beta_k)}{ ((n+1)\beta_{k-1} +
    n\beta_k)(\alpha_{k-1}+\alpha_k)}}$ lies between
  $\displaystyle{\frac{((n+1)\alpha_{k-1} + n\alpha_k)\beta_{k-1}}{
    ((n+1)\beta_{k-1} + n\beta_k)\alpha_{k-1}}}$ and
  $\displaystyle{\frac{((n+1)\alpha_{k-1} + n\alpha_k) \beta_k}{
    ((n+1)\beta_{k-1} + n\beta_k)\alpha_k}}$, each of which is between
  two terms of the required type by the argument above.
\end{itemize}
\end{proof}

Let $\cO\subset \N^\N$ be the set of itineraries
$\bn$ which contain infinitely many disjoint subwords $1^{2k-3}$, and
let $\Reg\subset \N^\N$ be the set of regular itineraries $\bn$,
i.e. those for which $\Phi^{-1}(\bn)$ is a point.

\newpage 

\begin{theorem}
\label{thm:generic-regular}
$\cO\subset\Reg$, and $\cO$ is a dense $G_\delta$ subset of $\N^\N$.
\end{theorem}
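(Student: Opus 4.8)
The plan is to prove the two assertions separately: first that $\cO\subseteq\Reg$, and then that $\cO$ is a dense $G_\delta$ in $\N^\N$.

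For the inclusion $\cO\subseteq\Reg$, let $\bn\in\cO$, and suppose $\bal,\bbeta\in\Phi^{-1}(\bn)$; I must show $\bal=\bbeta$. If either lies on a face of $\Delta$ the situation reduces, via Remark~\ref{rmk:reduce-k} and induction on~$k$, to a lower-dimensional problem, so I may assume $\bal,\bbeta\in\mathring{\Delta}$ and work with the Hilbert metric~$\delta$. As in the proof of Theorem~\ref{thm:infismin}~a), for each~$r$ both $\bal$ and $\bbeta$ lie in $A_{\bn,r}=\Upsilon_{\bn,r}(\oD)$, where $\Upsilon_{\bn,r}=K_{n_0}^{-1}\circ\cdots\circ K_{n_r}^{-1}$. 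By Lemma~\ref{lem:non-expanding} each $K_{n_s}^{-1}$ is $\delta$-non-expanding, so $\delta$ never increases along the itinerary; the point is to show it is contracted to~$0$. Here is where the disjoint subwords $1^{2k-3}$ enter: whenever $\bn$ contains a block $n_s=n_{s+1}=\cdots=n_{s+2k-4}=1$, the corresponding product $K_1^{-1}\circ\cdots\circ K_1^{-1}$ (of length $2k-3$) equals $f_{A(1,\ldots,1)}$ with $A(1,\ldots,1)$ a fixed strictly positive matrix (by the computation in the proof of Theorem~\ref{thm:regular}~a), specialised to all $n_r=1$), so by Birkhoff's theorem this composite contracts~$\delta$ by the fixed factor $\tau(d(A(1,\ldots,1)))<1$. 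Since $\bn$ contains infinitely many disjoint such blocks, $\Upsilon_{\bn,r}$ contracts~$\delta$ by $\tau(d(A(1,\ldots,1)))^{N(r)}$ where $N(r)\to\infty$, whence $\delta(\bal,\bbeta)=0$ and $\bal=\bbeta$.

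For the topological statement, density is easy: given any $\bm\in\N^\N$ and any $R$, the sequence $m_0\ldots m_{R-1}\,1^{2k-3}\,1^{2k-3}\,1^{2k-3}\,\ldots$ (or any sequence agreeing with $\bm$ on the first $R$ entries and then repeating $1^{2k-3}$ blocks forever) lies in $\cO$ and is within the appropriate basic neighbourhood of~$\bm$. For the $G_\delta$ property, write $\cO=\bigcap_{j\ge 1}\cO_j$, where $\cO_j$ is the set of $\bn\in\N^\N$ containing at least~$j$ disjoint occurrences of the word $1^{2k-3}$. Each $\cO_j$ is open, since membership is determined by finitely many coordinates of~$\bn$ (once $j$ disjoint blocks have appeared in $n_0,\ldots,n_{t-1}$, every $\bn'$ agreeing with $\bn$ up to index~$t$ also lies in $\cO_j$), and each $\cO_j$ is dense by the same construction used for density of~$\cO$ itself. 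Hence $\cO$ is a dense $G_\delta$.

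The main obstacle is the case analysis when $\bal$ or $\bbeta$ lies on a face of~$\Delta$, where the Hilbert metric is not defined and Birkhoff's theorem does not apply directly: this requires invoking the reduction of Remark~\ref{rmk:reduce-k} (deleting zero coordinates and the corresponding positions of the itinerary) together with induction on~$k$, and one must check that the disjoint-$1^{2k-3}$ condition on~$\bn$ persists, in a suitably modified form, for the reduced itinerary on the smaller alphabet — in fact the reduced itinerary still contains infinitely many disjoint blocks $1^{2(k-1)-3}$, so the inductive hypothesis applies. The remaining ingredients — the explicit strict positivity of $A(1,\ldots,1)$ and the non-expansion lemma — are already available in the excerpt, so apart from this boundary bookkeeping the argument is short.
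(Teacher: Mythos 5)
Your strategy is the same as the paper's: the blocks $1^{2k-3}$ supply a definite Birkhoff contraction of the Hilbert metric (via the strictly positive matrix $A(1)^{2k-3}$), Lemma~\ref{lem:non-expanding} handles everything in between, and the $G_\delta$ statement comes from intersecting open dense sets. The topological half is correct as written. There is, however, a genuine gap in the contraction step. You conclude $\delta(\bal,\bbeta)=0$ from ``$\bal,\bbeta\in A_{\bn,r}=\Upsilon_{\bn,r}(\oD)$ and $\Upsilon_{\bn,r}$ contracts $\delta$ by $\tau(d(A(1,\ldots,1)))^{N(r)}$''. That only bounds $\delta(\bal,\bbeta)$ by $\tau^{N(r)}$ times the Hilbert distance between the preimages of $\bal$ and $\bbeta$ under $\Upsilon_{\bn,r}$, and those preimages range over $\oD$, whose Hilbert diameter is infinite (and $\delta$ is not even defined on the faces); a contraction factor tending to $0$ multiplied by an unbounded quantity proves nothing. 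The missing ingredient --- which is exactly how the paper closes the argument --- is to start the contraction from a set of \emph{finite} Hilbert diameter: $K_1^{-(2k-3)}(\oD)$ is a compact subset of $\mathring{\Delta}$ (by strict positivity of $A(1)^{2k-3}$), of some finite diameter $D$, and then $A_{\bn,\,r_i+(2k-3)}=\Upsilon_{\bn,r_i}\bigl(K_1^{-(2k-3)}(\oD)\bigr)$ has diameter at most $\lambda^{i-1}D$, since $\Upsilon_{\bn,r_i}$ contains $i-1$ earlier complete blocks, each contracting by $\lambda=\tau(d(A(1)^{2k-3}))$, and is non-expanding elsewhere. With that amendment your argument is complete.

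Your ``main obstacle'' --- the case where $\bal$ or $\bbeta$ lies on a face of $\Delta$ --- is in fact vacuous, so the induction on $k$ via Remark~\ref{rmk:reduce-k} is unnecessary (and your claim that the reduced itinerary retains infinitely many blocks $1^{2(k-1)-3}$ describes a situation that cannot arise). By Lemma~\ref{lem:zero-component}, $\alpha_i=0$ for some $i\le k-1$ forces $n_r=0$ for every $r\equiv i-1\bmod k-1$; since $2k-3\ge k-1$, a single block $1^{2k-3}$ meets every residue class modulo $k-1$, so no point whose itinerary lies in $\cO$ has a vanishing coordinate, and $\Phi^{-1}(\bn)\subset\mathring{\Delta}$ automatically. (The same conclusion also falls out of the finite-diameter observation above, since $\Phi^{-1}(\bn)\subset A_{\bn,\,r_1+(2k-3)}\subset\mathring{\Delta}$.)
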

\begin{proof}
Let $\bn\in\cO$.  As shown in the proof of Theorem~\ref{thm:regular},
the map $K_1^{-(2k-3)}\colon \mathring{\Delta}\to\mathring{\Delta}$ is
represented by the strictly positive matrix $A(1)^{2k-3}$, and hence
by Birkhoff's theorem contracts the Hilbert metric by a factor
$\lambda\in(0,1)$. It follows, using Lemma~\ref{lem:non-expanding},
that if $r_i$ is the index of the start of the $i^\text{th}$ disjoint
subword $1^{2k-3}$ in $\bn$, then we have
\[A_{\bn, r_i+(2k-3)} = \Upsilon_{\bn, r_i+(2k-3)}(\oD) = \Upsilon_{\bn,
  r_i}\left(K_1^{-(2k-3)}(\oD)\right)\] has diameter bounded above by
$\lambda^{i-1}D$, where $D$ is the Hilbert diameter of
$K_1^{-(2k-3)}(\oD)\subset \mathring{\Delta}$. Therefore
$\Phi^{-1}(\bn)$ is a single point, so that $\bn\in\Reg$.

For each $N\ge 0$ let $\cO_N\subset\N^\N$ be the set of itineraries
which contain a word $1^{2k-3}$ starting after the $N^\text{th}$
symbol. Then $\cO_N$ is open and dense in the Baire space $\N^\N$, so
that $\cO=\bigcap_{N\ge 0} \cO_N$ is a dense~$G_\delta$ subset of
$\N^\N$ as required. 
\end{proof}

\begin{acknowledgements} 
We are grateful to Arnaldo Nogueira for helpful conversations.
\end{acknowledgements}

\bibliography{lexrefs}

\begin{thebibliography}{10}

\bibitem{BP}
J.~Berstel and D.~Perrin.
\newblock The origins of combinatorics on words.
\newblock {\em European J. Combin.}, 28(3):996--1022, 2007.

\bibitem{GBA}
V.~Berth{\'e}, S.~Ferenczi, and L.~Zamboni.
\newblock Interactions between dynamics, arithmetics and combinatorics: the
  good, the bad, and the ugly.
\newblock In {\em Algebraic and topological dynamics}, volume 385 of {\em
  Contemp. Math.}, pages 333--364. Amer. Math. Soc., Providence, RI, 2005.

\bibitem{Birkhoff}
G.~Birkhoff.
\newblock Extensions of {J}entzsch's theorem.
\newblock {\em Trans. Amer. Math. Soc.}, 85:219--227, 1957.

\bibitem{Borovikov}
V.~Borovikov.
\newblock On the intersection of a sequence of simplexes.
\newblock {\em Uspehi Matem. Nauk (N.S.)}, 7(6(52)):179--180, 1952.

\bibitem{Bruin}
H.~Bruin.
\newblock Renormalization in a class of interval translation maps of {$d$}
  branches.
\newblock {\em Dyn. Syst.}, 22(1):11--24, 2007.

\bibitem{BT}
H.~Bruin and S.~Troubetzkoy.
\newblock The {G}auss map on a class of interval translation mappings.
\newblock {\em Israel J. Math.}, 137:125--148, 2003.

\bibitem{Carroll}
J.~E. Carroll.
\newblock Birkhoff's contraction coefficient.
\newblock {\em Linear Algebra Appl.}, 389:227--234, 2004.

\bibitem{kneading}
W.~de~Melo and S.~van Strien.
\newblock {\em One-dimensional dynamics}, volume~25 of {\em Ergebnisse der
  Mathematik und ihrer Grenzgebiete (3) [Results in Mathematics and Related
  Areas (3)]}.
\newblock Springer-Verlag, Berlin, 1993.

\bibitem{Sturmian1}
J-M. Gambaudo, O.~Lanford, III, and C.~Tresser.
\newblock Dynamique symbolique des rotations.
\newblock {\em C. R. Acad. Sci. Paris S\'er. I Math.}, 299(16):823--826, 1984.

\bibitem{SMS}
W.~Gottschalk.
\newblock Substitution minimal sets.
\newblock {\em Trans. Amer. Math. Soc.}, 109:467--491, 1963.

\bibitem{morsehedlund2}
G.~A. Hedlund.
\newblock Sturmian minimal sets.
\newblock {\em Amer. J. Math.}, 66:605--620, 1944.

\bibitem{Lyndon}
R.~C. Lyndon.
\newblock On {B}urnside's problem.
\newblock {\em Trans. Amer. Math. Soc.}, 77:202--215, 1954.

\bibitem{morsehedlund1}
M.~Morse and G.~A. Hedlund.
\newblock Symbolic dynamics {II}. {S}turmian trajectories.
\newblock {\em Amer. J. Math.}, 62:1--42, 1940.

\bibitem{parry}
W.~Parry.
\newblock On the {$\beta $}-expansions of real numbers.
\newblock {\em Acta Math. Acad. Sci. Hungar.}, 11:401--416, 1960.

\bibitem{Schweiger}
F.~Schweiger.
\newblock {\em Multidimensional continued fractions}.
\newblock Oxford Science Publications. Oxford University Press, Oxford, 2000.

\bibitem{Sturmian2}
P.~Veerman.
\newblock Symbolic dynamics of order-preserving orbits.
\newblock {\em Phys. D}, 29(1-2):191--201, 1987.

\bibitem{Zie}
K.~Ziemian.
\newblock Rotation sets for subshifts of finite type.
\newblock {\em Fund. Math.}, 146(2):189--201, 1995.

\end{thebibliography}

\end{document}